\documentclass[11pt, reqno]{amsart}
\usepackage[dvipsnames]{xcolor}
\usepackage{mathtools}
\usepackage[english]{babel}
\usepackage{etoolbox}
\usepackage{amsthm, graphicx}
\usepackage[expansion=false]{microtype}
\usepackage[export]{adjustbox}
\usepackage{blindtext}
\usepackage{latexsym}
\usepackage[pagewise]{lineno}
\usepackage[utf8]{inputenc}
\usepackage{exscale}
\usepackage{amsmath}
\usepackage{amssymb}
\usepackage{amsfonts}
\usepackage{mathrsfs}
\usepackage{amsbsy}
\usepackage{relsize}
\usepackage{comment}
\PassOptionsToPackage{reqno}{amsmath}
\usepackage{esint}
\usepackage{stmaryrd}
\usepackage{cite} 
\usepackage{tikz}

\usepackage{overpic}
\usepackage[T2A,T1]{fontenc}

\usepackage[margin=1in]{geometry}
\usepackage{microtype}

\numberwithin{equation}{section}
\usepackage[scaled]{helvet}
\DeclareMathOperator{\sgn}{sgn}
\newcommand{\bb}{\mathbf{B}}

\newcommand{\norm}[1]{\lVert #1 \rVert}
\newcommand{\LL}{\mathscr{L}}
\newcommand{\bx}{\mathbf{X}}

\newcommand{\R}{\mathbb{R}}

\newcommand{\E}{\mathbb{E}}

\newcommand{\bbs}{\mathbb{S}}

\newcommand{\fq}{\mathfrak{q}}

\definecolor{Cgrey}{rgb}{0.85,0.85,0.85}
\definecolor{Cblue}{rgb}{0.50,0.85,0.85}
\definecolor{Cred}{rgb}{1,.2,.4}
\definecolor{fancy}{rgb}{0.10,0.85,0.10}
\definecolor{mygreen}{rgb}{0.01,0.6,0.2}
\definecolor{tealgreen}{rgb}{0.0, 0.51, 0.5}
\definecolor{tangerine}{rgb}{0.95, 0.52, 0.0}
\definecolor{saffron}{rgb}{0.96, 0.77, 0.19}
\definecolor{mint}{rgb}{0.24, 0.71, 0.54}
\definecolor{lincolngreen}{rgb}{0.11, 0.35, 0.02}
\definecolor{lava}{rgb}{0.81, 0.06, 0.13}
\definecolor{lasallegreen}{rgb}{0.03, 0.47, 0.19}
\definecolor{mahogany}{rgb}{0.75, 0.25, 0.0}
\definecolor{electricultramarine}{rgb}{0.25, 0.0, 1.0}
\definecolor{mypink1}{rgb}{0.858, 0.188, 0.478}
\definecolor{mypink2}{RGB}{219, 48, 122}
\definecolor{mypink3}{cmyk}{0, 0.7808, 0.4429, 0.1412}
\definecolor{mygray}{gray}{0.6}
\definecolor{venetianred}{rgb}{0.78, 0.03, 0.08}
\definecolor{sapphire}{rgb}{0.03, 0.15, 0.4}
\definecolor{utahcrimson}{rgb}{0.83, 0.0, 0.25}
\definecolor{trueblue}{rgb}{0.0, 0.45, 0.81}
\definecolor{carminered}{rgb}{1.0, 0.0, 0.22}
\definecolor{cobalt}{rgb}{0.0, 0.28, 0.67}
\definecolor{cornflowerblue}{rgb}{0.39, 0.58, 0.93}
\definecolor{tangerineyellow}{rgb}{1.0, 0.8, 0.0}
\definecolor{mypink1}{rgb}{0.858, 0.188, 0.478}
\definecolor{mypink2}{RGB}{219, 48, 122}
\definecolor{mypink3}{cmyk}{0, 0.7808, 0.4429, 0.1412}
\definecolor{mygray}{gray}{0.6}
\definecolor{venetianred}{rgb}{0.78, 0.03, 0.08}
\definecolor{sapphire}{rgb}{0.03, 0.15, 0.4}
\definecolor{utahcrimson}{rgb}{0.83, 0.0, 0.25}
\definecolor{trueblue}{rgb}{0.0, 0.45, 0.81}
\definecolor{carminered}{rgb}{1.0, 0.0, 0.22}
\definecolor{cobalt}{rgb}{0.0, 0.28, 0.67}
\definecolor{cornflowerblue}{rgb}{0.39, 0.58, 0.93}
\definecolor{darkmagenta}{rgb}{0.55, 0.0, 0.55}

\definecolor{bluegreen}{rgb}{0.0, 0.3, 0.9}
\definecolor{britishracinggreen}{rgb}{0.0, 0.26, 0.15}
\definecolor{cornellred}{rgb}{0.7, 0.11, 0.11}
\definecolor{darkpastelgreen}{rgb}{0.01, 0.75, 0.24}
\definecolor{emerald}{rgb}{0.31, 0.78, 0.47}
\definecolor{ferrarired}{rgb}{1.0, 0.11, 0.0}
\definecolor{DarkBlue}{rgb}{0.0, 0.0, 0.55}
\definecolor{DarkRed}{rgb}{0.55, 0.0, 0.0}
\usepackage[
  colorlinks=true,
  linkcolor=DarkRed,
  citecolor=DarkBlue,
  urlcolor=Cblue,
]{hyperref}

\newtheorem{defi}{Definition}[section]
\newtheorem{prop}{Proposition}[section]
\newtheorem{thm}{Theorem}[section]
\newtheorem{rem}{Remark}[section]
\newtheorem{lem}{Lemma}[section]

\renewcommand{\theequation}{\thesection.\arabic{equation}}
\numberwithin{equation}{section}

\usepackage{lipsum}
\usepackage{tikz,xcolor}

\definecolor{lime}{HTML}{A6CE39}
\DeclareRobustCommand{\orcidicon}{
\begin{tikzpicture}
\draw[lime, fill=lime] (0,0) 
circle [radius=0.16] 
node[white] {{\sffamily \tiny ID}};
\draw[white, fill=white] (-0.0625,0.095) 
circle [radius=0.007];
\end{tikzpicture}
\hspace{-2mm}
}
\foreach \x in {A, B}{\expandafter\xdef\csname orcid\x\endcsname{\noexpand\href{https://orcid.org/\csname orcidauthor\x\endcsname}
{\noexpand\orcidicon}}
}

\usepackage{bm}

\title[Fujita exponent]{The Fujita exponent across an interface}

\author[M. Majdoub, E. Mliki]{Mohamed Majdoub\orcidA{} \& Ezzedine Mliki\orcidB{}}
\address{Department of Mathematics, College of Science, Imam Abdulrahman Bin Faisal University, P. O. Box 1982, Dammam, Saudi Arabia}
\address{Basic and Applied Scientific Research Center, Imam Abdulrahman Bin Faisal University, P.O. Box 1982, 31441, Dammam, Saudi Arabia}
\email{\tt mmajdoub@iau.edu.sa}
\email{\tt med.majdoub@gmail.com}
\email{\tt mohamed.majdoub@fst.rnu.tn}
\email{\tt ermliki@iau.edu.sa}

\begin{document}
\begin{abstract}
We consider the semilinear parabolic equation
\[
\partial_t u = \Delta u + 2\mathfrak{q}\,\delta_{\bbs}\,\nabla u + |u|^{p-1}u
\qquad \text{in } (0,\infty)\times\mathbb{R}^N,
\]
where $|\fq|\le 1$, $p>1$, and $\bbs$ is a fixed interface hyperplane.

Working in Lebesgue spaces, we first establish local well-posedness of mild solutions. This is achieved by combining Gaussian bounds for the associated fundamental solution with a contraction mapping argument adapted to the lack of spatial homogeneity induced by the interface term.

We then prove a sharp Fujita-type dichotomy for nonnegative solutions. Specifically, we show that every nontrivial solution blows up in finite time when $1<p \le 1+\frac{2}{N}$, whereas for $p>1+\frac{2}{N}$ global solutions exist for sufficiently small initial data. The blow-up analysis relies on a suitably adapted test-function method that accounts for the presence of the interface.

It is noteworthy that the critical exponent coincides with the classical Fujita exponent for the heat equation, indicating that the Fujita phenomenon remains stable under the presence of discontinuous diffusion effects and interface transmission conditions. To the best of our knowledge, this is the first result of this type for operators involving a singular drift supported on a hypersurface.
\end{abstract}
\subjclass[2020]{35K58, 35K15, 35B44, 35B33, 35R05, 60J60, 60J55}

\keywords{
Fujita critical exponent; Fujita phenomenon; finite-time blow-up; global existence; semilinear parabolic equations; transmission conditions; singular interface drift; skew Brownian motion; local time
}
\date{\today}
\maketitle
%\tableofcontents

\renewcommand{\theequation}{\thesection.\arabic{equation}}
%==========================================================================
\section{Introduction}
\label{sec:intro}
%==========================================================================

We study the Cauchy problem for the semilinear parabolic equation
\begin{equation}
\begin{cases}
\partial_t u(t,x) = \mathscr{L}u(t,x) + |u(t,x)|^{p-1}u(t,x),  & \text{in } (0, \infty)\times\mathbb{R}^N,\\
u(0,x) =u_0(x), & \text{in } \mathbb{R}^N,
\end{cases}
\label{Main-eq}
\end{equation}
where $p>1$, $N\ge 2$, and $\mathscr{L}$ is a second-order
operator whose drift is a singular measure concentrated on a fixed hyperplane~$\bbs$ given by
\begin{equation}\label{L-operator}
\mathscr{L} = \Delta + 2\mathfrak{q}\, \delta_{\bbs}\, \nabla.
\end{equation}
Here $\Delta$ is the Laplacian on $\R^N$, the hyperplane
$\bbs=\{(x_1,\ldots,x_N)\in\R^N\mid x_N=0\}$ serves as a fixed interface,
$\fq$ is a real parameter with $|\fq|\le 1$, and $\delta_{\bbs}$ is the surface
distribution defined by
\[
\int_{\mathbb{R}^N} \varphi(x)\,\delta_{\bbs}(x)\,dx
=\int_{\bbs}\varphi(x)\,d\sigma(x),
\qquad \varphi\in C_c^\infty(\R^N).
\]

The operator $\mathscr{L}$ acts as the ordinary Laplacian in each of the
half-spaces $\{x_N>0\}$ and $\{x_N<0\}$, but imposes a singular drift
concentrated on~$\bbs$ that couples the two sides through a transmission
condition on the normal derivative.  Concretely, a smooth function $f$ belongs
to the domain of $\mathscr{L}$ if and only if it satisfies
\begin{equation}\label{eq:transmission-intro}
(1-\fq)\,\partial_{x_N}f(\tilde x,0^+)
\;=\;
(1+\fq)\,\partial_{x_N}f(\tilde x,0^-),
\qquad \tilde x\in\R^{N-1},
\end{equation}
so the parameter $\fq$ controls the imbalance of diffusive flux across~$\bbs$.
When $\fq=0$ the transmission condition reduces to continuity of the normal
derivative and $\mathscr{L}$ coincides with $\Delta$ on all of~$\R^N$.  For
$\fq\neq 0$ the interface acts as a partially permeable, biased barrier:
the diffusion is enhanced on one side and impeded on the other, while the
tangential dynamics remain unaffected.

\medskip

This class of operators was introduced and developed by Portenko~\cite{Portenko1, Portenko2, Portenko3}, who constructed diffusion processes whose generators include a generalized drift supported on a hypersurface. The fundamental solution of the linear equation $(\partial_t - \mathscr{L})u = 0$ was obtained explicitly by Mastrangelo and Talbi~\cite{Talbi}. A comprehensive account of its analytic and probabilistic properties is provided in the survey by Lejay~\cite{Lejay2006}; see also~\cite{Zili-2016}.

The work~\cite{Talbi} is motivated, at its origin, by Portenko’s studies~\cite{Portenko1, Portenko2} on parabolic equations with globally regular diffusion coefficients and generalized drifts supported on a surface. It is also inspired by ~\cite{Okada} who treats the dimension one, where diffusion coefficients are piecewise regular and the drift is supported at discontinuity points of the coefficient. 

In higher dimensions, the situation becomes significantly more intricate. Unlike problems involving reflected processes with smooth coefficients, the multi-dimensional setting cannot, in general, be reduced to the one-dimensional case. As observed in~\cite{Talbi}, even in the simplest situation where the discontinuity set is a hyperplane and the coefficients are piecewise constant, the transition probabilities do not factorize. This non-factorization arises from the fact that the transition probabilities of the components tangent to the hyperplane depend on the diffusion coefficient, which itself varies with the normal direction.

The analysis in~\cite{Talbi} is carried out in a relatively simple but still relevant framework in which the diffusion matrix is diagonal. In this case, changes in the diffusion coefficient can be interpreted as local changes of the time scale. This observation naturally leads to the notion of a modified skew Brownian motion associated with a diffusion coefficient that is piecewise regular.

The operator $\mathscr{L}$ admits a transparent probabilistic interpretation
through the theory of Feller semigroups and skew Brownian motion.  Define the
family of operators $\{\mathbf{T}(t)\}_{t\ge 0}$ by
\[
(\mathbf{T}(t)\varphi)(x)
\;=\;
\int_{\R^N}G_{\fq}(x,y,t)\,\varphi(y)\,dy,
\qquad t>0,
\]
with $\mathbf{T}(0)=I$, where $G_{\fq}(x,y,t)$ is the fundamental solution
of~$(\partial_t-\mathscr{L})u=0$ as stated in Theorem~\ref{Fundamental} below.  Positivity of $G_{\fq}$, conservation of
mass ($\int G_{\fq}(\cdot,y,t)\,dy=1$), and Gaussian upper and lower bounds
(see Proposition~\ref{Cruc-p} below) together imply that $\{\mathbf{T}(t)\}$ is a
conservative, positive, strongly continuous semigroup on $C_0(\R^N)$, hence a
{Markov semigroup}.  By the Hille--Yosida--Feller theory, the diffusion
generated by $\mathscr{L}$ can therefore be realised as a conservative strong
Markov process $X_t=(\widetilde X_t,X_t^N)$ with continuous sample paths.

The associated diffusion process admits the following structure (see, e.g.,~\cite{Rami}). The tangential component
\(\widetilde X_t \in \mathbb{R}^{N-1}\) evolves as a standard \((N-1)\)-dimensional Brownian motion, independent of the interface. The normal component \(X_t^N \in \mathbb{R}\) satisfies the stochastic differential equation
\begin{equation}\label{eq:SDE-intro}
X_t^N
= X_0^N + \sqrt{2}\,B_t + 2\fq\,L_t^0(X^N),
\end{equation}
where \(B\) is a standard one-dimensional Brownian motion and \(L_t^0(X^N)\) denotes the symmetric local time of \(X^N\) at the origin.

Equivalently, \(X^N\) is a one-dimensional skew Brownian motion with skewness parameter
\(\alpha = (1-\fq)/2 \in [0,1]\). Away from the interface, the process behaves as standard Brownian motion; each time the trajectory hits \(\bbs\), it is reflected to the positive side with probability \(\alpha\) and to the negative side with probability \(1-\alpha\).  The parameter~$\fq$ thus quantifies the asymmetry of
crossing: $\fq=0$ (i.e.\ $\alpha=\tfrac12$) yields symmetric Brownian motion,
$\fq>0$ biases trajectories toward $\{x_N<0\}$, and $\fq<0$ biases them
toward $\{x_N>0\}$.

Analytically, this probabilistic structure is reflected in a factorization of
the kernel $G_{\fq}$ into a product of the $(N\!-\!1)$-dimensional Gaussian
heat kernel in the tangential variables and a one-dimensional skew kernel
$g_{\fq}$ in the normal variable.  The skew kernel is itself an explicit
combination of the free heat kernel and a reflected Gaussian correction (see
Theorem~\ref{Fundamental} below).  This factorization is at the heart of all
estimates in the present paper: it explains why $G_{\mathfrak{q}}$ obeys the same two-sided Gaussian bounds as the standard heat kernel, up to multiplicative constants depending on $\mathfrak{q}$, and consequently why the large-scale diffusive behavior of $\mathscr{L}$ is indistinguishable from that of the Laplacian.

The Feller--Markov viewpoint plays two concrete roles in our analysis.  First,
it supplies positivity and mass conservation for the linear flow, which are
essential both for the fixed-point construction of mild solutions and for the
comparison arguments in the blow-up proof.  Second, the equivalence between
the PDE weak formulation, the martingale problem for~$\mathscr{L}$, and the
SDE~\eqref{eq:SDE-intro} (established in
Proposition~\ref{prop:martingale-localtime} below) provides a flexible toolkit for
deriving test-function identities: the cancellation of interface terms in the
Fujita functional, which is the core technical step of Section~\ref{sec:fujita},
is ultimately a consequence of the transmission
condition~\eqref{eq:transmission-intro} and the local-time structure
of skew Brownian motion.

For the classical semilinear heat equation
\begin{equation}\label{eq:intro-heat}
\partial_t u=\Delta u+u^p,
\end{equation}
a fundamental result due to Fujita~\cite{Fujita1966} reveals a sharp
threshold phenomenon governed by the critical exponent
\begin{equation}
    \label{eq:p-f}
    p_F \;=\; 1+\frac{2}{N}.
\end{equation}
When $1<p\le p_F$, every nontrivial nonnegative solution blows up in
finite time, regardless of how small the initial data are. In contrast,
if $p>p_F$, global-in-time solutions exist for sufficiently small initial
data. 

This critical exponent is not accidental: it reflects a precise balance
between the nonlinear amplification $u^p$ and the diffusive smoothing
induced by the heat operator. In particular, the scaling invariance of
\eqref{eq:intro-heat}, together with the Gaussian decay of the heat
kernel and the structure of self-similar solutions, all point to $p_F$
as the unique threshold separating blow-up from global existence. We
refer to Fujita~\cite{Fujita1966, Fuj69}, Hayakawa~\cite{Hayakawa1973},
and Weissler~\cite{Weissler1980} for the classical theory, and to
Fujishima--Kawakami--Sire~\cite{FKS2019} for more recent developments.

The operator $\mathscr{L}$ in~\eqref{L-operator} is neither translation
invariant nor scale homogeneous: the presence of the interface~$\bbs$
breaks spatial homogeneity, while the singular drift
$2\fq\,\delta_{\bbs}\,\nabla$ disrupts the scaling structure that lies at
the heart of the classical Fujita argument. This leads to the central question of this work: does the Fujita threshold~\eqref{eq:p-f} persist for~\eqref{Main-eq}, or is it modified by the interface?

\medskip\noindent
We show that the Fujita exponent is in fact robust. Despite the strong
spatial inhomogeneity encoded in $\mathscr{L}$, the critical threshold
remains precisely $p_F=1+\frac{2}{N}$. The underlying reason is that the
interface affects the constants in the kernel estimates, and therefore in
the corresponding blow-up and existence bounds, without altering the
fundamental scaling exponents. In particular, the two-sided Gaussian
comparison (see Proposition~\ref{Cruc-p}\,(i) below) guarantees that the
large-scale diffusive behaviour of $\mathscr{L}$ coincides with that of
the Laplacian.

We work with two solution concepts, which are equivalent within the appropriate functional framework.
\begin{defi}[{\tt Weak solution}]\label{defn:weak-solution}
Let $N\ge 2$, $p>1$, $|\fq|\leq 1$, and $0<T\leq \infty$.
A function $u$ is a weak solution of \eqref{Main-eq} on
$(0,T)\times\R^N$ if $u_0\in L^1_{\mathrm{loc}}(\R^N)$,
$|u|^{p-1}u\in L^1_{\mathrm{loc}}((0,T)\times\R^N)$, and
\begin{equation}\label{eq:weak-formulation}
\begin{split}
&\int_0^T\!\!\int_{\R^N}
u\,(\partial_t\psi+\Delta\psi)\,dx\,dt
\;+\;
2\fq\int_0^T\!\!\int_{\bbs}
u(t,\tilde x,0)\,\partial_{x_N}\psi(t,\tilde x,0)
\,d\sigma(\tilde x)\,dt \\
&\quad +\;
\int_{\R^N}u_0(x)\,\psi(0,x)\,dx
\;+\;
\int_0^T\!\!\int_{\R^N}|u|^{p-1}u\;\psi\,dx\,dt
\;=\;0
\end{split}
\end{equation}
for all $\psi\in C_c^\infty([0,T)\times\R^N)$.
\end{defi}
Within the appropriate functional framework, \eqref{Main-eq} is equivalent to
the Duhamel (integral) formulation
\begin{equation}\label{Duhamel}
 u(t) = e^{t\LL}u_0 + \int_0^t e^{(t-s)\LL}\bigl(|u(s)|^{p-1}u(s)\bigr)\,ds,
\end{equation}
where $e^{t\LL}$ denotes the linear semigroup generated by $\LL$.  A solution
of~\eqref{Duhamel} is called a {mild solution} of~\eqref{Main-eq}.

Throughout the paper, $\fq$ (fraktur font) denotes the {skewness
parameter} of the interface, satisfying $|\fq|\le 1$.  The unadorned
letter~$q$ is reserved for the {Lebesgue integrability exponent}.
These two quantities play entirely different roles, and the reader should take
care not to confuse them.

Our first result establishes local well-posedness of mild solutions in
Lebesgue spaces.  Beyond existence and uniqueness, the statement provides a
functional framework that explicitly tracks the two competing effects: the
heat-type smoothing away from the interface and the loss of scaling invariance
induced by the transmission structure.
\begin{thm}[{\tt Local Lebesgue well-posedness for \eqref{Main-eq}}]
\label{thm:LWP}
Let $N\ge 2$, $|\fq|\le 1$, $p>1$, and define
\begin{equation}
    \label{eq:q-c}
    q_c=\frac{N(p-1)}{2}.
\end{equation}
Let $1\le q<\infty$ satisfy
\begin{equation}
    \label{eq:ass-q}
    \left(q > q_c \;\;\;\text{and}\;\;\; q\geq 1\right)\quad \text{or}\quad \left(q= q_c \;\;\;\text{and}\;\;\; q> 1\right).
\end{equation}
Then, for every $u_0 \in L^q(\R^N)$, there exist a time $T = T(u_0) > 0$ and a unique mild solution $u \in C([0,T],L^q(\R^N))$ of~\eqref{Main-eq}. In addition, the following properties hold:
\begin{enumerate}
\item[(i)] {(Smoothing effect and continuous dependence on initial data)} For all $t\in (0,T]$,
\begin{equation}
\label{eq:cont-depend}
\|u(t)-v(t)\|_{L^q}
+ t^{\frac{N}{2q}}\|u(t)-v(t)\|_{L^\infty}
\leq C\|u_0-v_0\|_{L^q},
\end{equation}
where $T=\min\{T(u_0),T(v_0)\}$ and the constant $C$ depends only on $\|u_0\|_{L^q}$ and $\|v_0\|_{L^q}$.
\item[(ii)] The solution satisfies
\[
\lim_{t\downarrow 0} t^{\frac{N}{2q}}\|u(t)\|_{L^\infty}=0.
\]
\item[(iii)] {(Positivity preservation)} If $u_0\geq 0$, then $u(t)\geq 0$ for every $t\in [0,T(u_0)]$.
\end{enumerate}
Moreover, the existence time can be chosen uniformly on compact sets: for any compact set $\mathcal{K}\subset L^q(\R^N)$, there exists $T=T(\mathcal{K})>0$ such that the solution of~\eqref{Main-eq} associated with any $u_0\in \mathcal{K}$ exists on $[0,T]$.
\end{thm}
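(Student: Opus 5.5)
The plan is to transfer Weissler's fixed-point argument for the heat equation to $e^{t\LL}$. The only linear inputs needed are the Gaussian upper bound $0\le G_{\fq}(x,y,t)\le C_{\fq}\,t^{-N/2}e^{-c|x-y|^2/t}$ and the mass conservation from Proposition~\ref{Cruc-p}. The kernel is not of convolution type, so I will use the bound through Young's inequality applied to the majorant $C_{\fq}\,e^{ct\Delta}$-type kernel $K_t(x-y)=C_{\fq}t^{-N/2}e^{-c|x-y|^2/t}$, which is a convolution kernel. Since $|e^{t\LL}f|\le K_t*|f|$ pointwise, this yields the $L^r$--$L^s$ smoothing estimate
\[
\|e^{t\LL}f\|_{L^s}\le C\,t^{-\frac N2(\frac1r-\frac1s)}\|f\|_{L^r},\qquad 1\le r\le s\le\infty .
\]
Strong continuity on $L^q$ for $q<\infty$ follows from the $C_0$ property together with density and uniform $L^q$ bounds. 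Positivity of the semigroup is immediate from $G_{\fq}\ge0$.

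Next I set up the contraction. Take
\[
X_T=\Bigl\{u\in C([0,T];L^q)\cap C((0,T];L^\infty):\ \sup_t\|u\|_{L^q}\le 2M,\ \sup_t t^{\frac N{2q}}\|u\|_{L^\infty}\le \varepsilon\Bigr\}
\]
and define $\Phi(u)=e^{t\LL}u_0+\int_0^t e^{(t-s)\LL}|u|^{p-1}u\,ds$. The nonlinear term is estimated by $\||u|^{p-1}u-|v|^{p-1}v\|_{L^r}\le p(\|u\|_\infty^{p-1}+\|v\|_\infty^{p-1})\|u-v\|_{L^r}$ with $r=q$ or $r=\infty$. Using $\|u(s)\|_\infty^{p-1}\le \varepsilon^{p-1}s^{-\frac{N(p-1)}{2q}}$, the time integrals become Beta integrals $\int_0^t s^{-\frac{N(p-1)}{2q}}ds\sim t^{1-q_c/q}$, finite because $q_c/q\le 1$ (strictly if $q>q_c$). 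The $L^\infty$ weight gives $t^{N/2q}\int_0^t s^{-\frac{N(p-1)}{2q}-\frac N{2q}}ds$. This is also finite when $q>q_c$, $q\ge1$, since the exponent is $<1$ because $N p/(2q)<1$ fails in general. If it fails, I will instead route the $L^\infty$ estimate through $L^q\to L^\infty$ smoothing of $(t-s)$ and split $[0,t/2]$, $[t/2,t]$, which is standard.

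The two cases are handled differently.
\begin{itemize}
\item \emph{Subcritical case $q>q_c$.} The contraction constant carries a factor $T^{1-q_c/q}$, so $T$ depends only on $\|u_0\|_{L^q}$ and $\varepsilon$ can be taken large.
\item \emph{Critical case $q=q_c>1$.} No power of $T$ is gained, so smallness must come from $\sup_{t\le T}t^{N/2q}\|e^{t\LL}u_0\|_\infty$.
\end{itemize}

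I expect the critical case to be the main obstacle: showing that this quantity tends to $0$ as $T\to0$, which also gives (ii). I will approximate $u_0$ in $L^q$ by $\phi\in L^q\cap L^\infty$. Then $t^{N/2q}\|e^{t\LL}\phi\|_\infty\le t^{N/2q}\|\phi\|_\infty\to0$, and the remainder is bounded by $C\|u_0-\phi\|_{L^q}$, using $q<\infty$ (and $q>1$ for the integrability arguments). Here $T$ depends on $u_0$ itself and not only on its norm. The uniformity on a compact set $\mathcal K$ then follows from a finite $\delta$-net $\{\phi_j\}$: for each $u_0\in\mathcal K$, $\sup_{t\le T}t^{N/2q}\|e^{t\LL}u_0\|_\infty\le C\delta+\max_j\sup_{t\le T}t^{N/2q}\|e^{t\LL}\phi_j\|_\infty$.

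The remaining properties follow from the contraction. Continuous dependence (i) comes from subtracting the fixed-point equations and absorbing the nonlinear difference, whose coefficient is $<1/2$, into the left side, giving \eqref{eq:cont-depend}. Uniqueness in $C([0,T];L^q)$ without the weighted condition requires an extra argument when $q=q_c$; I would follow the approach of Brezis--Cazenave, using (ii) and a small-time bootstrap. When $q>q_c$, one can compare with the constructed solution directly in $L^q$, at least for $q\ge p$; for smaller $q$ I would adopt uniqueness in the class $X_T$. Positivity (iii) follows because $\Phi$ maps the closed subset $\{u\ge0\}$ of $X_T$ into itself, since $G_{\fq}\ge0$ and $|u|^{p-1}u\ge0$ there, so the fixed point is nonnegative. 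Finally, continuity in $C([0,T];L^q)$ at $t=0$ uses strong continuity of $e^{t\LL}$ and the bound $\|\int_0^t\cdots\|_{L^q}\to0$.
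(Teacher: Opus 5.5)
\textbf{The critical case $q=q_c$ has a genuine gap.}

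Your supercritical argument is sound. It is Weissler's scheme with $L^\infty$ as the auxiliary space, and after the $[0,t/2]$/$[t/2,t]$ split every nonlinear term carries a factor $T^{1-q_c/q}$. It also gives (i) directly from the contraction, which is lighter than the paper's route through Lemma~\ref{lem:linear} and the singular Gronwall lemma.

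At $q=q_c$ you have $N(p-1)/(2q)=1$, so your key bound reads
\[
\|\mathcal N(u(s))\|_{L^q}\le \|u(s)\|_{L^\infty}^{p-1}\|u(s)\|_{L^q}\le 2M\varepsilon^{p-1}s^{-1}.
\]
This is not integrable at $s=0$: $\int_0^t s^{-1}\,ds=\infty$, not ``$t^{0}$''. So ``no power of $T$ is gained'' is wrong; the bound diverges. The same happens in the $[0,t/2]$ piece of your $L^\infty$ split, where you get $t^{-N/(2q)}\int_0^{t/2}s^{-1}\,ds$. Hence you have not shown that $\Phi$ maps $X_T$ into itself.

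Interpolating between $L^q$ and $L^\infty$ does not rescue the scheme. Placing $\mathcal N(u)$ in $L^\rho$ with $\rho<q$ and smoothing $L^\rho\to L^q$ or $L^\rho\to L^\infty$ restores integrability in time. But the resulting bound contains $\|u\|_{L^q}^{q/\rho}$ with $q/\rho>1$, so it is of size $M^{q/\rho}\varepsilon^{p-q/\rho}$. Since $M\sim\|u_0\|_{L^q}$ is not small, this is $\le\varepsilon/2$ only if $p-q/\rho>1$. That is impossible when $1<p\le 2$, and this range is compatible with $q_c>1$ as soon as $N(p-1)>2$. So with the pair of norms $\bigl(\sup_t\|u\|_{L^q},\ \sup_t t^{N/(2q)}\|u\|_{L^\infty}\bigr)$ the critical fixed point does not close in general.

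The missing idea is an auxiliary norm that controls the nonlinearity by itself. The paper uses $t^{\vartheta}\|u(t)\|_{L^{r^*}}$ with $r^*\in(q,pq)$ and $r^*\ge p$. Then $\|\mathcal N(u)\|_{L^{r^*/p}}=\|u\|_{L^{r^*}}^p$ with $1\le r^*/p<q$. The smoothing estimates $L^{r^*/p}\to L^q$ and $L^{r^*/p}\to L^{r^*}$ produce Beta integrals whose exponents sum to $1$ by \eqref{B-id}. Every nonlinear term is therefore $O(\delta^p)$, independently of $\|u_0\|_{L^q}$. This is exactly where $q>1$ is needed, since it is what makes such an $r^*$ exist. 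The smallness of the linear flow must then be taken in this $L^{r^*}$ norm (Lemma~\ref{lem:vanish-lin-prof}), and the $L^\infty$ bound and (ii) are recovered only afterwards.

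\textbf{Uniqueness is proved in a weaker form than stated.} The theorem asserts uniqueness among all mild solutions in $C([0,T];L^q)$. Your proposal gives this only for $q\ge p$ and falls back to uniqueness in $X_T$ for $q_c<q<p$. In the critical case it defers to Brezis--Cazenave without an argument. To match the statement you must show that an arbitrary $C([0,T];L^q)$ solution enters the weighted class on a short interval. The paper does this with Lemma~\ref{lem:linear}, and in the critical case also uses compactness of $v([0,T])$ together with Lemma~\ref{lem:vanish-lin-prof}.
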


\begin{rem}\label{rem:LWP}
\leavevmode\rm
\begin{enumerate}
\item
Since the kernel of \(e^{t\LL}\) is not given by translations of a fixed profile, the standard convolution-based approach to the existence theory for~\eqref{eq:intro-heat} is no longer applicable. Instead, we establish the contraction property by exploiting pointwise Gaussian bounds on \(G_{\fq}\), which yield \(L^q\)–\(L^r\) smoothing estimates with the same exponents as those of the classical heat semigroup (see Proposition~\ref{prop:semigroup} below).
\item The ``doubly critical'' case \(q=\frac{N(p-1)}{2}=1\) is considerably more delicate and remains open. In particular, it appears that for certain initial data \(u_0 \in L^1\), even local-in-time solutions may fail to exist. Recent results for the classical semilinear heat equation ($\fq=0$) can be found in~\cite{Fuji, Miya} and the references therein.
\end{enumerate}
\end{rem}

Our second and third results together identify the sharp Fujita-type dichotomy for~\eqref{Main-eq}.

\begin{thm}[{\tt Blow-up}]\label{Fujita1}
Let $N\ge2$ and $|\fq|\leq 1$.  If $1<p\le 1+\frac{2}{N}$ and
$u_0\in L^1(\R^N)$ satisfies
\begin{equation}\label{ass-u0}
    \int_{\R^N}u_0(x)\,dx>0,
\end{equation}
then every nonnegative weak solution of~\eqref{Main-eq} (in the sense of
Definition~\ref{defn:weak-solution}) blows up in finite time.
\end{thm}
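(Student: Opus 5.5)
The plan is the Mitidieri–Pohozaev test-function method applied directly to the weak formulation \eqref{eq:weak-formulation}, arguing by contradiction. The key observation is that the interface term can be removed by a choice of test function rather than estimated. Suppose $u\ge 0$ is a global weak solution, i.e.\ \eqref{eq:weak-formulation} holds with $T=\infty$.

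\emph{Step 1: choice of test function.} The interface term
\[
2\fq\int_0^T\!\!\int_{\bbs}u(t,\tilde x,0)\,\partial_{x_N}\psi(t,\tilde x,0)\,d\sigma\,dt
\]
vanishes identically if $\partial_{x_N}\psi(t,\tilde x,0)=0$. This holds for any $\psi$ that depends on $x$ only through $|x|^2$: if $\psi(t,x)=\Theta(t,|x|^2)$, then $\partial_{x_N}\psi=2x_N\,\partial_2\Theta$, which is zero on $\bbs$.

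Fix $\eta,\Phi\in C^\infty([0,\infty))$ with $0\le\eta,\Phi\le1$, equal to $1$ on $[0,1]$ and to $0$ on $[2,\infty)$, and $k$ large (at least $2p'$, where $p'=p/(p-1)$). For $R>0$ set
\[
\psi_R(t,x)=\eta(t/R^2)^k\,\Phi(|x|^2/R^2)^k .
\]
Then $\psi_R$ is admissible in Definition~\ref{defn:weak-solution}. The interface contribution drops out for every $\fq$ with $|\fq|\le1$, including the degenerate values $\fq=\pm1$. We are left with
\[
\int_{\R^N}u_0\,\psi_R(0)\,dx+\iint u^p\psi_R
=-\iint u\,(\partial_t\psi_R+\Delta\psi_R).
\]

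\emph{Step 2: standard estimate.} By the choice of $k$,
\[
|\partial_t\psi_R|+|\Delta\psi_R|\le C R^{-2}\psi_R^{1/p}.
\]
Apply Hölder and then Young on the supports $Q_R=\{t\le 2R^2,\ |x|^2\le 2R^2\}$, which have measure comparable to $R^{N+2}$. This gives
\[
\iint u\,|\partial_t\psi_R+\Delta\psi_R|\le \tfrac12\iint u^p\psi_R + C R^{\,N+2-2p'}.
\]
Hence
\[
\int u_0\psi_R(0)\,dx+\tfrac12\iint u^p\psi_R\le CR^{\,N+2-2p'}.
\]
The exponent $N+2-2p'$ is negative iff $p<1+\frac2N$, and zero iff $p=1+\frac2N$. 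Since $u_0\in L^1$ and $\psi_R(0)\to1$ pointwise, dominated convergence gives $\int u_0\psi_R(0)\to\int u_0>0$.

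\emph{Step 3: conclusion.} In the subcritical case, letting $R\to\infty$ forces $\int u_0\le 0$, contradicting \eqref{ass-u0}.

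The critical case $p=1+\frac2N$ is the main (though standard) obstacle, since the bound is only $O(1)$. First, Step 2 yields $u^p\in L^1((0,\infty)\times\R^N)$ by monotone convergence. Next, redo the estimate without Young's inequality. The derivatives of $\psi_R$ are supported in the annular region
\[
A_R=Q_R\setminus\{t\le R^2,\ |x|\le R\},
\]
and Hölder gives
\[
\int u_0\psi_R(0)+\iint u^p\psi_R\le C\Bigl(\iint_{A_R}u^p\psi_R\Bigr)^{1/p}R^{(N+2)/p'-2}.
\]
Here $(N+2)/p'-2=0$ exactly at criticality. The factor $\iint_{A_R}u^p\psi_R$ tends to $0$ as $R\to\infty$, because it is a tail of an integrable function. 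Hence $\int u_0+\iint u^p\le 0$, again a contradiction. So no global nonnegative weak solution exists, i.e.\ every such solution blows up in finite time.

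One point needs checking: the trace $u(t,\tilde x,0)$ appearing in \eqref{eq:weak-formulation} is multiplied by zero, so no regularity of $u$ on $\bbs$ is required for this argument.
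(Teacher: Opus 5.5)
Your proof is correct and follows the paper's approach. Both use a test function depending on $x$ only through $|x|^2$; it is even in $x_N$, so the interface term vanishes for every $|\fq|\le 1$, and the classical Fujita test-function estimates then go through unchanged.

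The only difference is in the critical case $p=1+\frac{2}{N}$. The paper introduces a separate spatial scale $\lambda$, applies H\"older only to the $\partial_t\psi$ term (supported in $T\le t\le 2T$) and Young to the $\Delta\psi$ term (giving $C\lambda^{-1}$), then lets $T\to\infty$ and afterwards $\lambda\to\infty$. You instead apply H\"older to both terms at once, using that the support of all derivatives lies in $\{t\ge R^2\}\cup\{|x|\ge R\}$, so the factor $\iint_{A_R}u^p\psi_R$ is a tail of the integrable function $u^p$. This is equally valid and avoids the auxiliary parameter.
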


\begin{thm}[{\tt Global existence}]\label{thm:global-small}
Let $N\ge2$ and $|\mathfrak{q}|\leq 1$.
Assume that $p>1+\frac{2}{N}$ and that the initial data
$u_0\in L^{q_c}(\R^N)$ satisfies
\begin{equation}\label{Ass-GE0}
    \norm{u_0}_{L^{q_c}}\le \varepsilon,
\end{equation}
for some $\varepsilon>0$ sufficiently small.
Then the Cauchy problem~\eqref{Main-eq} admits a global-in-time mild solution.
\end{thm}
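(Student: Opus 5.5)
The plan is a Kato--Weissler type fixed point argument for the Duhamel formulation~\eqref{Duhamel}, run in a time-weighted auxiliary norm on $[0,\infty)$. The required linear tool is the $L^q$--$L^r$ smoothing estimate for $e^{t\LL}$ (Proposition~\ref{prop:semigroup}), which comes from the two-sided Gaussian bounds of Proposition~\ref{Cruc-p}:
\[
\|e^{t\LL}f\|_{L^r}\le C\,t^{-\frac N2(\frac1q-\frac1r)}\|f\|_{L^q},\qquad 1\le q\le r\le\infty,\ t>0,
\]
with $C=C(N,\fq)$ independent of $t$. This uniformity in $t$ is what makes a global argument possible despite the lack of scaling invariance.

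\textbf{Choice of space.} Since $p>1+\frac2N$, we have $q_c>1$. Choose an auxiliary exponent $r$ with $r>p$, $r>q_c$, and
\[
\frac{p}{r}<1, \qquad \beta:=\frac N2\Big(\frac1{q_c}-\frac1r\right)=\frac1{p-1}-\frac N{2r}\in\Big(0,\frac1p\Big).
\]
The condition $\beta<1/p$ is equivalent to $\frac{N(p-1)}{2r}>\frac{p-1}{p}\cdot$ (rearranged), and it can be met for $r$ slightly above $q_c p$. It is convenient to also impose $r>q_c$ so that $\beta>0$. Then work in
\[
X=\Big\{u:\ \sup_{t>0} t^{\beta}\|u(t)\|_{L^r}\le M\Big\},\qquad d(u,v)=\sup_{t>0}t^\beta\|u(t)-v(t)\|_{L^r},
\]
which is a complete metric space. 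Define $\Phi(u)(t)=e^{t\LL}u_0+\int_0^te^{(t-s)\LL}(|u|^{p-1}u)(s)\,ds$.

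\textbf{Estimates.} The linear term satisfies $t^\beta\|e^{t\LL}u_0\|_{L^r}\le C\|u_0\|_{L^{q_c}}\le C\varepsilon$ by the smoothing estimate. For the nonlinear term, apply the estimate from $L^{r/p}$ to $L^r$, using $\||u|^{p-1}u-|v|^{p-1}v\|_{L^{r/p}}\le p(\|u\|_{L^r}^{p-1}+\|v\|_{L^r}^{p-1})\|u-v\|_{L^r}$. This gives
\[
t^\beta\|\Phi(u)(t)-\Phi(v)(t)\|_{L^r}\le C M^{p-1}d(u,v)\,t^\beta\int_0^t (t-s)^{-\frac{N(p-1)}{2r}}s^{-p\beta}\,ds.
\]
The integral converges because $\frac{N(p-1)}{2r}<1$ and $p\beta<1$. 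By scaling, it equals $t^{1-\frac{N(p-1)}{2r}-p\beta}B(\cdot,\cdot)$. The exponent check is
\[
\beta+1-\frac{N(p-1)}{2r}-p\beta=1-(p-1)\beta-\frac{N(p-1)}{2r}=1-(p-1)\cdot\frac1{p-1}=0,
\]
so the bound is uniform in $t$. This exact cancellation is where the choice $q=q_c$ enters.

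\textbf{Conclusion.} Taking $M=2C\varepsilon$ and $\varepsilon$ small so that $CM^{p-1}<1/2$, the map $\Phi$ sends $X$ into itself and is a contraction, so it has a unique fixed point in $X$, which is a global mild solution. To land in $C([0,\infty),L^{q_c})$, estimate the Duhamel term in $L^{q_c}$ from $L^{r/p}$. The same exponent bookkeeping gives a bound by $CM^p$, and continuity follows from the strong continuity of $e^{t\LL}$ together with dominated convergence. If desired, this solution coincides with the local one from Theorem~\ref{thm:LWP} by uniqueness.

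\textbf{Main obstacle.} The only real difficulty is the linear estimate with constants uniform in time. This is supplied by the Gaussian upper bound $G_{\fq}(x,y,t)\le C t^{-N/2}e^{-c|x-y|^2/t}$, whose dominating kernel is translation invariant, so Young's inequality applies exactly as for the heat semigroup.
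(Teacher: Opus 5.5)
Your proposal is correct and follows essentially the paper's proof: a contraction in the norm $\sup_{t>0}t^{\mu}\|u(t)\|_{L^r}$, with the linear term controlled by $L^{q_c}$--$L^r$ smoothing, the Duhamel term by $L^{r/p}$--$L^r$ smoothing, and the cancellation $1-\frac{q_c}{r}-p\mu=-\mu$ making the Beta-integral bound uniform in $t$ (your extra $L^{q_c}$ bound on the Duhamel term goes slightly beyond what the paper proves). One slip needs correcting: $\beta<\frac1p$ is equivalent to $\frac{q_c}{r}>\frac1p$, i.e.\ $r<pq_c$, so $r$ must be chosen in $(\max\{p,q_c\},\,pq_c)$, which is exactly the paper's range $\frac{1}{pq_c}<\frac1r<\min\{\frac1{q_c},\frac1p\}$, and not slightly above $pq_c$, where $p\beta>1$ and $\int_0^t s^{-p\beta}\,ds$ diverges.
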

\begin{rem}\label{rem:main-results}
\leavevmode\rm
\begin{enumerate}
\item
The conclusion that $p_F=1+\tfrac{2}{N}$ for~\eqref{Main-eq} is intrinsic to the equation and does not arise from a perturbative effect.
The operator $\mathscr{L}$ is neither translation invariant nor homogeneous,
and the interface drift constitutes a genuine distributional term.
The persistence of the Fujita exponent follows from the two-sided Gaussian
comparison (cf.\ Proposition~\ref{Cruc-p}\,(i)), which shows that the interface
affects the constants but leaves the critical scaling unchanged.

\item
The principal distinction between the proof of Theorem~\ref{Fujita1} and the
classical argument for~\eqref{eq:intro-heat} lies in the construction of the
test function. Our choice is adapted to the transmission
condition~\eqref{eq:transmission-intro}, ensuring that no uncontrolled
interface contribution appears in the weak formulation.

\item
The theorem yields a complete dichotomy. If $1<p\le p_F$, there exists no
nontrivial global nonnegative solution. If $p>p_F$, nontrivial global
solutions exist whenever the initial data are sufficiently small.

\item
Although skew Brownian motion provides a natural probabilistic
interpretation of $\mathscr{L}$, the blow-up argument is entirely analytic.
It relies only on positivity and two-sided Gaussian bounds for the
fundamental solution, and therefore extends to any interface operator
admitting comparable kernel estimates.
\end{enumerate}
\end{rem}

The article is organized as follows.
Section~\ref{sec:prelim} reviews the explicit representation of the fundamental solution $G_{\fq}$, establishes the Gaussian bounds and the Chapman--Kolmogorov identity, and presents the equivalence between the weak PDE formulation, the martingale problem associated with~$\mathscr{L}$, and the skew Brownian motion SDE~\eqref{eq:SDE-intro}.  
Section~\ref{sec:lgwp} is devoted to the semilinear problem, where we prove local existence and uniqueness of mild solutions in Lebesgue spaces (Theorem~\ref{thm:LWP}), together with global existence for sufficiently small initial data (Theorem~\ref{thm:global-small}).  
Section~\ref{sec:fujita} contains the Fujita-type analysis and the proof of Theorem~\ref{Fujita1}.  
Finally, Section~\ref{sec:conclusion} concludes the paper with a summary of the main results and a discussion of possible directions for future research.

Throughout the paper, $C$ denotes a positive constant whose value may vary from line to line.\\

%==========================================================================
\section{Preliminaries}
\label{sec:prelim}
%==========================================================================

We begin by fixing notation and collecting the main analytic properties of the
fundamental solution associated with the interface operator $\mathscr L$.

Let $\mathbf{n}=(0,0,\cdots,1)\in\R^N$. Then we can write
$\bbs=\bbs\oplus \bbs^\perp=\bbs\oplus \R\mathbf{n}$.
Every $x\in\R^N$ admits the decomposition
\[
x=\tilde x+\langle x,\mathbf{n}\rangle\mathbf{n}
=(x-\langle x,\mathbf{n}\rangle\mathbf{n})+\langle x,\mathbf{n}\rangle\mathbf{n},
\]
where $\tilde x=(x_1,\ldots,x_{N-1},0)$ denotes the orthogonal projection of $x$
onto $\bbs$.

\medskip
The next theorem recalls the explicit form of the fundamental solution.
Although the formula is known~\cite{Talbi, Zili-2016}, we
state it here because it is the starting point for all kernel bounds and
semigroup estimates used in the paper.

\begin{thm}
    \label{Fundamental}
There exists a unique fundamental solution $G_{\fq}(x,y, t)$ 
to the linear equation $(\partial_t-\LL)u=0$, which can be expressed explicitly  as
\begin{equation}
    \label{Fund-sol}
    \!\!\!G_{\fq}(x,y, t) = \frac{1}{(4\pi t)^{\frac{N}{2}}}\left[ \exp \left( -\frac{|x - y|^2}{4t} \right) -\frac{y_N}{|y_N|} \, \fq \, \exp \left( -\frac{(|x_N| + |y_N|)^2 + |\tilde{x} - \tilde{y}|^2}{4t} \right) \right]
\end{equation}
where $\tilde{x}=(x_1, x_2, \cdots, x_{N-1},0)$ and $\tilde{y}=(y_1, y_2, \cdots, y_{N-1},0)$ denote the orthogonal projections of $x$ and $y$ onto $\bbs$, respectively.
\end{thm}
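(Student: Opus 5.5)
The plan is to exploit the product structure of \eqref{Fund-sol}. Writing $h_d(z,t)=(4\pi t)^{-d/2}e^{-|z|^2/4t}$ for the $d$-dimensional heat kernel, the formula reads
\[
G_{\fq}(x,y,t)=h_{N-1}(\tilde x-\tilde y,t)\,g_{\fq}(x_N,y_N,t),\qquad
g_{\fq}(a,b,t)=h_1(a-b,t)-\sgn(b)\,\fq\,h_1(|a|+|b|,t).
\]
Since $\LL$ acts as $\Delta_{\tilde x}$ in the tangential variables and as the one-dimensional skew operator $\partial_{aa}+2\fq\delta_0\partial_a$ in the normal variable, the problem reduces to showing that $g_{\fq}$ is the fundamental solution of the one-dimensional skew operator. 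The multi-dimensional statement then follows by taking the tensor product with $h_{N-1}$: the product of fundamental solutions of commuting operators acting on separate variables is the fundamental solution of their sum.

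\textbf{Existence: verification of $g_{\fq}$.} Fix $b\neq0$; by symmetry ($b\mapsto-b$, $\fq\mapsto-\fq$) it suffices to take $b>0$. I will check four things.
\begin{enumerate}
\item[(a)] \emph{Heat equation off the interface.} On each half-line $a\gtrless0$, both terms are translates or reflections of $h_1$, so $\partial_t g=\partial_{aa}g$.
\item[(b)] \emph{Continuity at $a=0$.} The correction term is even in $a$, so $g_{\fq}(\cdot,b,t)$ is continuous across $a=0$.
\item[(c)] \emph{Transmission condition \eqref{eq:transmission-intro}.} Set $A=\tfrac{b}{2t}h_1(b,t)$. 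A direct differentiation gives
\[
\partial_a g(0^+)=(1+\fq)A,\qquad \partial_a g(0^-)=(1-\fq)A,
\]
so that $(1-\fq)\,\partial_ag(0^+)=(1+\fq)\,\partial_ag(0^-)$, which is exactly \eqref{eq:transmission-intro}.
\item[(d)] \emph{Initial condition.} For $b>0$ we have $|a|+b\ge b>0$, so $h_1(|a|+b,t)\to0$ uniformly in $a$ as $t\downarrow0$. Hence $g_{\fq}(\cdot,b,t)\to\delta_b$.
\end{enumerate}
In addition, I will record that $\int_{\R} g_{\fq}(a,b,t)\,db=1$: the correction integrates to $-\fq\int_{\R}\sgn(b)\,h_1(|a|+|b|,t)\,db=0$ by oddness in $b$. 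Together with $g_{\fq}\ge0$ for $|\fq|\le1$ (which holds since $h_1(|a|+|b|)\le h_1(a-b)$), this gives a conservative positive kernel.

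\textbf{Duality with Definition~\ref{defn:weak-solution}.} Next I will check the adjoint relation in the $b$-variable. As a function of $b$, $g_{\fq}$ jumps at $b=0$, with $(1-\fq)\,g(a,0^+,t)=(1+\fq)\,g(a,0^-,t)$, while $\partial_b g$ is continuous there. I then integrate by parts separately on the half-spaces $\{y_N>0\}$ and $\{y_N<0\}$ against a test function $\psi$. The surviving boundary term is $\int_{\bbs}[u]\,\partial_{x_N}\psi$, where $[u]$ is the jump of $u$ across $\bbs$. Writing this jump in terms of the symmetric trace $\tfrac12\bigl(u(0^+)+u(0^-)\bigr)$ produces exactly $2\fq\int u\,\partial_{x_N}\psi$. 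This confirms that $u(t)=e^{t\LL}u_0$, built from $G_{\fq}$, solves \eqref{eq:weak-formulation} with zero nonlinearity, with $u(t,\tilde x,0)$ interpreted as the symmetric trace.

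I expect this bookkeeping step to be the main obstacle. The delicate points are: which variable carries the transmission condition and which carries the jump; the meaning of the trace $u(t,\tilde x,0)$; and the sign convention for $\fq$. I will fix these conventions at the outset by requiring consistency with the SDE \eqref{eq:SDE-intro}.

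\textbf{Uniqueness.} I will argue in the one-dimensional reduction. Suppose $w$ is the difference of two fundamental solutions with Gaussian growth. Then $w$ solves the transmission problem with zero initial data. Taking the Laplace transform in $t$ turns this into an ODE $\lambda\hat w=\hat w''$ on each half-line. Its bounded solutions are $c_\pm e^{\mp\sqrt\lambda\,a}$, and the two interface conditions force $c_\pm=0$. The tangential variables are then handled by a Fourier transform in $\tilde x$. Alternatively, uniqueness can be obtained from a maximum principle adapted to the transmission condition, using the weight $(1\mp\fq)$ on each side to symmetrize the interface flux.
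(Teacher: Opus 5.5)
The paper does not prove Theorem~\ref{Fundamental}; it quotes \eqref{Fund-sol} from \cite{Talbi, Zili-2016}. Your route is therefore a genuinely self-contained alternative: tensorize $G_\fq=h_{N-1}\otimes g_\fq$, check the one-dimensional transmission problem by hand, and prove uniqueness by Laplace/Fourier transform. Its core is sound.

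Indeed $\partial_a g_\fq(0^\pm,b,t)=(1\pm\fq)\,\tfrac{b}{2t}\,h_1(b,t)$ for every $b\neq0$. So the reduction to $b>0$ is unnecessary; as you stated it, it would also need $a\mapsto-a$. Hence $G_\fq(\cdot,y,\cdot)$ is continuous across $\bbs$, satisfies \eqref{eq:transmission-intro} in $x$, solves the heat equation off $\bbs$, and starts from $\delta_y$.

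In the uniqueness step, apply continuity and \eqref{eq:transmission-intro} to $c\,e^{-k|a|}$ with $k=\sqrt{\lambda+|\xi|^2}$. This gives $-\bigl((1-\fq)+(1+\fq)\bigr)k\,c=-2kc=0$ for every $|\fq|\le1$. That includes the endpoints, where the weights $(1\mp\fq)$ of your alternative degenerate. The uniqueness class should be kernels with Gaussian upper bounds, so that the transforms make sense, not ``Gaussian growth''.

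This proves the theorem with $\LL$ read as the Laplacian off $\bbs$ with continuity and \eqref{eq:transmission-intro}, and $x$ as the backward variable (consistent with $\int G_\fq\,dy=1$). What you gain over the paper is an actual verification in place of a citation.

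The duality paragraph, however, is wrong, and not only by a typo.

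\emph{The jump relation is reversed.} Since $g_\fq(a,0^\pm,t)=(1\mp\fq)\,h_1(a,t)$, the correct relation is $(1+\fq)\,g_\fq(a,0^+,t)=(1-\fq)\,g_\fq(a,0^-,t)$, the reverse of yours.

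\emph{You are integrating the wrong function by parts.} The jump sits in the second variable, so the object you integrate by parts is the adjoint flow $\rho(t,y)=\int u_0(x)\,G_\fq(x,y,t)\,dx$. It is not $e^{t\LL}u_0(x)=\int G_\fq(x,y,t)\,u_0(y)\,dy$, which is continuous across $\bbs$. Inserting the latter into \eqref{eq:weak-formulation} leaves the residual $\int\!\int_{\bbs}\bigl(\psi\,[\partial_{x_N}u]+2\fq\,u\,\partial_{x_N}\psi\bigr)\,d\sigma\,dt$. For $\psi$ vanishing on $\bbs$ but with arbitrary $\partial_{x_N}\psi|_{\bbs}$, this reduces to $2\fq\int\!\int_{\bbs}u\,\partial_{x_N}\psi$, which does not vanish when $\fq\neq0$.

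\emph{The correct bookkeeping gives the opposite sign.} With the correct jump, $[\rho]=-2\fq\,\bar\rho$, where $\bar\rho=\tfrac12(\rho(0^+)+\rho(0^-))$. Your integration by parts then yields \eqref{eq:weak-formulation} with $-2\fq$ in place of $+2\fq$.

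\emph{The SDE check fails the same way.} The term $+2\fq\,L^0$ pushes $X^N$ upward when $\fq>0$. But \eqref{Fund-sol} gives $\int_0^\infty g_\fq(0,b,t)\,db=(1-\fq)/2$, which is $<\tfrac12$ for $\fq>0$. Likewise, reading $\delta_\bbs\partial_{x_N}$ with the symmetric average, $(\Delta+2\fq\delta_\bbs\partial_{x_N})f$ is a function only if $(1+\fq)\partial_{x_N}f(\tilde x,0^+)=(1-\fq)\partial_{x_N}f(\tilde x,0^-)$, the opposite of \eqref{eq:transmission-intro}.

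So \eqref{eq:transmission-intro}, \eqref{eq:weak-formulation} and \eqref{eq:SDE-intro} are not mutually consistent as written. Step~4 of Proposition~\ref{prop:semigroup} makes the same conflation of forward and backward variables, so you cannot lean on it. Fixing conventions by \eqref{eq:SDE-intro} would lead you to a kernel with the sign of $\fq$ reversed (up to the normalization of $L^0$), not to \eqref{Fund-sol}.

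Drop this step, or state it correctly as the adjoint identity with $-2\fq$. The theorem then rests on (a)--(d), tensorization and uniqueness, with \eqref{eq:transmission-intro} taken as the definition of $\LL$.
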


\begin{rem}\label{rem:fund-sol}
\leavevmode\rm 
\begin{enumerate}
\item The first term is the free heat kernel, while the
second is a reflected Gaussian term weighted by $\mathfrak q$ and $\sgn(y_N)$.
\item The presence of $|x_N|+|y_N|$ (instead of $x_N-y_N$) is the analytic
signature of reflection across $\bbs$.
\item The explicit kernel is one of the main reasons this interface model is a
natural testing ground for Fujita-type blow-up: it is nonhomogeneous, yet still
admits sharp pointwise bounds.
\item As observed in \cite{Talbi}, an analogous representation of the fundamental solution remains valid when the vector $\mathbf{n} = (0,0,\ldots,1)$ is replaced by an arbitrary nonzero vector $\mathbf{n} \in \mathbb{R}^N$. In this more general setting, the hyperplane $\bbs$ is given by
$$
\bbs=\{\, x\in\R^N\,|\, \langle x,\mathbf{n}\rangle=0\,\}.
$$
\end{enumerate}
\end{rem}

\medskip
For the nonlinear problem, the key point is not the explicit formula itself but
rather the structural properties it implies: positivity, mass conservation, and
Gaussian comparisons. These are summarized next.

\begin{prop}
    \label{Cruc-p}
    Let $N\ge 2$ and $|\fq|\le 1$. The following properties hold:
    \begin{itemize}
        \item[\textup{(i)}] \textsf{Two-sided Gaussian comparison:}
        \begin{equation}\label{eq:gauss-compare}
(1-|\mathfrak q|)\,g(x-y,t)\ \le\ G_{\mathfrak q}(x,y,t)\ \le\ (1+|\mathfrak q|)\,g(x-y,t), \quad t>0,\; x,y\in\R^N
\end{equation}
where 
\begin{equation}
    \label{heat-kernel}
    g(z,t)=(4\pi t)^{-N/2}\exp\!\left(-\frac{|z|^2}{4t}\right), \quad t>0,\; z\in\R^N
\end{equation}
denotes the standard heat kernel on $\R^N$.
    \item[\textup{(ii)}] The kernel $G_{\fq}$ is normalized in the $y$--variable:
\begin{equation}
    \label{eq:G-q-y}
    \int_{\R^N} G_{\fq}(x,y,t)\,dy=1.
\end{equation}
    \item[\textup{(iii)}] The integral of $G_{\fq}$ with respect to $x$ is given by
    \begin{equation}
    \label{eq:G-q-x}
    \int_{\R^N} G_{\fq}(x,y,t)\,dx
    =
    1 -\frac{y_N}{|y_N|} \, \fq \, \operatorname{erfc}\left( \frac{|y_N|}{2\sqrt{t}} \right),
    \end{equation}
    where the complementary error function $\operatorname{erfc}$ is given by
    \begin{equation}
        \label{erfc}
        \operatorname{erfc}(z)=\frac{2}{\sqrt{\pi}}\int_{z}^{\infty} e^{-s^{2}}\,ds,
\qquad z\in\mathbb R.
    \end{equation}
    \item[\textup{(iv)}] For every $ 1 \leq r \leq \infty $ and $ t > 0 $,
\begin{equation}
    \label{G-q-Lr}
    \left\|G_{\fq}(\cdot, y, t)\right\|_{L^r} \leq C\, t^{-\frac{N}{2}\left(1 - \frac{1}{r}\right)}, \quad 
    \left\|G_{\fq}(x, \cdot, t)\right\|_{L^r} \leq C\, t^{-\frac{N}{2}\left(1 - \frac{1}{r}\right)},
\end{equation}
where $ C > 0 $ depends only on $ N $ and $\fq $.
    \end{itemize}
\end{prop}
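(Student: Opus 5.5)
Everything follows from the explicit formula \eqref{Fund-sol}. Write $G_{\fq}(x,y,t)=g(x-y,t)-\sgn(y_N)\,\fq\,h(x,y,t)$ with
\[
h(x,y,t)=(4\pi t)^{-N/2}\exp\!\Big(-\tfrac{(|x_N|+|y_N|)^2+|\tilde x-\tilde y|^2}{4t}\Big).
\]
The whole proof rests on the elementary inequality $(|x_N|+|y_N|)^2\ge (x_N-y_N)^2$, which gives $0\le h(x,y,t)\le g(x-y,t)$.

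For (i), the upper bound is then immediate: $G_{\fq}\le g+|\fq|h\le(1+|\fq|)g$. The naive lower bound obtained the same way is also $G_{\fq}\ge g-|\fq|h\ge(1-|\fq|)g$. This uses only $h\le g$ and needs no case analysis on the signs of $x_N,y_N$. It also covers the case where $x_N,y_N$ have opposite signs, in which $h=g$ exactly and $G_{\fq}=(1-\sgn(y_N)\fq)g$ lies between the two bounds. In particular $G_{\fq}\ge0$ for $|\fq|\le1$. For $y_N=0$ the sign is ambiguous, but this is a null set in $y$ and does not affect any integral statement.

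For (ii), I will integrate in $y$, splitting $\R^N$ into $\{y_N>0\}$ and $\{y_N<0\}$. The free part contributes $1$. The reflected part factorizes: the tangential Gaussian integrates to $1$ over $\R^{N-1}$, leaving
\[
-\fq\,(4\pi t)^{-1/2}\Big[\int_0^\infty e^{-(|x_N|+s)^2/4t}\,ds-\int_{-\infty}^0 e^{-(|x_N|+|s|)^2/4t}\,ds\Big].
\]
The substitution $s\mapsto -s$ makes the two integrals equal, so the bracket vanishes and \eqref{eq:G-q-y} follows.

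For (iii), I will integrate in $x$ with $y$ fixed. Again the free part gives $1$ and the tangential factor gives $1$. The remaining term is $-\sgn(y_N)\fq\,(4\pi t)^{-1/2}\int_{\R}e^{-(|x_N|+|y_N|)^2/4t}\,dx_N$. By evenness in $x_N$ this equals $2\int_0^\infty$. The substitution $\sigma=(x_N+|y_N|)/(2\sqrt t)$ turns it into $\frac{2}{\sqrt\pi}\int_{|y_N|/2\sqrt t}^\infty e^{-\sigma^2}d\sigma=\operatorname{erfc}(|y_N|/2\sqrt t)$, which is \eqref{eq:G-q-x}.

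For (iv), I will use (i) together with positivity: $|G_{\fq}(x,y,t)|\le 2g(x-y,t)$. The standard computation
\[
\|g(\cdot,t)\|_{L^r}=(4\pi t)^{-N/2}\big(\tfrac{4\pi t}{r}\big)^{N/2r}=C_{N,r}\,t^{-\frac N2(1-\frac1r)}
\]
then gives both bounds in \eqref{G-q-Lr}, with $r=\infty$ read directly off $(4\pi t)^{-N/2}$. Strictly speaking the constant depends on $r$ as well; it is uniform in $r$ since $r^{-N/2r}\le1$.

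None of these steps is really an obstacle. The only point requiring care is the sign bookkeeping for $\sgn(y_N)$ in (ii), since that cancellation is what makes the semigroup conservative, along with treating the null set $\{y_N=0\}$ consistently.
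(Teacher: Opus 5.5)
Your proposal is correct and follows essentially the same route as the paper: the inequality $(|x_N|+|y_N|)^2\ge (x_N-y_N)^2$, giving $0\le h\le g$, drives (i) and (iv). Parts (ii) and (iii) reduce, after integrating out the tangential Gaussian, to a one-dimensional normal integral, which cancels by the sign symmetry in (ii) and evaluates to $\operatorname{erfc}$ in (iii). Your normalization in (iii) is in fact cleaner than the paper's intermediate display, which carries a stray factor $1/\sqrt{\pi}$ that silently disappears before the same final formula.
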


\begin{rem}\label{rem:Cruc-p}
\leavevmode\rm 
\begin{enumerate}
\item Part (i) is the fundamental estimate of the paper: it implies that interface diffusion has the same {diffusive scaling} as the free heat equation, up to constants depending on $\mathfrak q$.
\item Part (ii) expresses the conservation of total mass in the forward variable. This is the analytic form of the conservativeness of the associated Markov process.
\item Part (iii) shows that integration in the other variable is not symmetric.
This asymmetry is expected: the kernel corresponds to a diffusion with a drift
supported on the interface; hence, it is not self-adjoint in general.
\item Part (iv) is the substitute for translation invariance. Even though the
kernel is not a convolution kernel, it satisfies the same $L^r$ scaling as the
Gaussian heat kernel, which is exactly what is needed for semigroup estimates.
\end{enumerate}
\end{rem}

\begin{proof}[Proof of Proposition~\ref{Cruc-p}]
~\begin{itemize}
\item[(i)] 
From \eqref{Fund-sol}, we have
$$
G_{\mathfrak q}(x,y,t)=(4\pi t)^{-N/2}\Bigl(A-\sigma\,\mathfrak q\,B\Bigr),
$$
where
\begin{eqnarray*}
A:&=&\exp\!\left(-\frac{|x-y|^2}{4t}\right),\\
B:&=&\exp\!\left(-\frac{(|x_N|+|y_N|)^2+|\tilde x-\tilde y|^2}{4t}\right),\\
\sigma:&=&\frac{y_N}{|y_N|}.
\end{eqnarray*}
Since $|x_N|+|y_N|\ge |x_N-y_N|$, we have
\[
(|x_N|+|y_N|)^2+|\tilde x-\tilde y|^2 \ \ge\ (x_N-y_N)^2+|\tilde x-\tilde y|^2=|x-y|^2,
\]
hence $0<B\le A$. Therefore
\[
A-|\mathfrak q|\,A\ \le\ A-\sigma\,\mathfrak q\,B\ \le\ A+|\mathfrak q|\,A,
\]
which yields \eqref{eq:gauss-compare} after multiplying by $(4\pi t)^{-N/2}$.
    \item[(ii)] 
The integral to compute is:
\begin{eqnarray*}
    \int_{\mathbb{R}^N} G_{\fq}(x, y, t) \, dy &=& \frac{1}{(4\pi t)^{N/2}} \int_{\mathbb{R}^N} \exp \left( -\frac{|x - y|^2}{4t} \right) dy \\&&- \frac{\fq}{(4\pi t)^{N/2}} \int_{\mathbb{R}^N} \frac{y_N}{|y_N|} \exp \left( -\frac{(|x_N| + |y_N|)^2 + |\tilde{x} - \tilde{y}|^2}{4t} \right) dy.
\end{eqnarray*}

The first term is the standard heat kernel in $ \mathbb{R}^N $. The integral of the heat kernel over $ \mathbb{R}^N $ is known to be 1:
$$ \frac{1}{(4\pi t)^{N/2}} \int_{\mathbb{R}^N} \exp \left( -\frac{|x - y|^2}{4t} \right) dy = 1. $$

The second term involves the sign function $\frac{y_N}{|y_N|} = \text{sgn}(y_N) $. The exponent can be rewritten as:
$$ (|x_N| + |y_N|)^2 + |\tilde{x} - \tilde{y}|^2 = (|x_N| + |y_N|)^2 + \sum_{i=1}^{N-1} (x_i - y_i)^2. $$

The integral separates into:
$$ \prod_{i=1}^{N-1} \left( \int_{\mathbb{R}} \exp \left( -\frac{(x_i - y_i)^2}{4t} \right) dy_i \right) \times \left( \int_{\mathbb{R}} \text{sgn}(y_N) \exp \left( -\frac{(|x_N| + |y_N|)^2}{4t} \right) d{y}_N \right). $$

Each of the first $ N-1 $ integrals is a standard Gaussian integral:
$$ \int_{\mathbb{R}} \exp \left( -\frac{(x_i - y_i)^2}{4t} \right) dy_i = \sqrt{4\pi t}. $$

The last integral is:
$$ \int_{\mathbb{R}} \text{sgn}(y_N) \exp \left( -\frac{(|x_N| + |y_N|)^2}{4t} \right) dy_N. $$

Split into two parts:
$$ \int_{0}^{\infty} \exp \left( -\frac{(|x_N| + y_N)^2}{4t} \right) dy_N - \int_{-\infty}^{0} \exp \left( -\frac{(|x_N| - y_N)^2}{4t} \right) d{y_N}. $$

Using substitutions $u = |x_N| + y_N $ and $ v = |x_N| - y_N $, both integrals become:
$$ \int_{|x_N|}^{\infty} \exp \left( -\frac{u^2}{4t} \right) du - \int_{|x_N|}^{\infty} \exp \left( -\frac{v^2}{4t} \right) dv = 0. $$

Thus, the second term vanishes, and the integral simplifies to:
$$ \frac{1}{(4\pi t)^{N/2}} \left[ (4\pi t)^{N/2} - \fq \times (4\pi t)^{(N-1)/2} \times 0 \right] = 1. $$ 
\item[(iii)] The integral can be separated into two parts:
$$ I = \frac{1}{(4\pi(t - s))^{N/2}} \left[ I_1 - \fq\, I_2 \right],$$
where:
\begin{align*}
    I_1 &= \int_{\mathbb{R}^N} \exp\left(-\frac{|x - y|^2}{4(t - s)}\right) dx, \\
    I_2 &= \int_{\mathbb{R}^N} \frac{y_N}{|y_N|} \exp\left(-\frac{(|x_N| + |y_N|)^2 + |\tilde{x} - \tilde{y}|^2}{4(t - s)}\right) dx.
\end{align*}

The first integral is a standard Gaussian integral:
$$ I_1 = \int_{\mathbb{R}^N} \exp\left(-\frac{|x - y|^2}{4(t - s)}\right) dx = (4\pi(t - s))^{N/2}.$$

The second integral can be factored as:
$$ I_2 = \frac{y_N}{|y_N|} \prod_{i=1}^{N-1} \left( \int_{-\infty}^\infty \exp\left(-\frac{(x_i - y_i)^2}{4(t - s)}\right) dx_i \right) \times \left( \int_{-\infty}^\infty \exp\left(-\frac{(|x_N| + |y_N|)^2}{4(t - s)}\right) dx_N \right).$$

The first $N-1$ integrals are Gaussian:
$$ \int_{-\infty}^\infty \exp\left(-\frac{(x_i - y_i)^2}{4(t - s)}\right) dx_i = \sqrt{4\pi(t - s)}.$$

The last integral requires special attention:
$$ J = \int_{-\infty}^\infty \exp\left(-\frac{(|x_N| + |y_N|)^2}{4(t - s)}\right) dx_N = 2 \int_{|y_N|}^\infty \exp\left(-\frac{w^2}{4(t - s)}\right) dw $$

Using the complementary error function~\eqref{erfc}, we can express $J$ as:
$$ J = \sqrt{4\pi(t - s)}  \text{erfc}\left(\frac{|y_N|}{2\sqrt{t - s}}\right).$$

Substituting back:
\begin{align*}
    I_2 &= \frac{y_N}{|y_N|} \left(\sqrt{4\pi(t - s)}\right)^{N-1}  \sqrt{4\pi(t - s)}  \text{erfc}\left(\frac{|y_N|}{2\sqrt{t - s}}\right), \\
    &= \frac{y_N}{|y_N|} (4\pi(t - s))^{N/2}  \frac{1}{\sqrt{\pi}} \text{erfc}\left(\frac{|y_N|}{2\sqrt{t - s}}\right).
\end{align*}

Thus, the complete integral becomes:
\begin{align*}
    I &= \frac{1}{(4\pi(t - s))^{N/2}} \left[ (4\pi(t - s))^{N/2} - \fq \frac{y_N}{|y_N|} (4\pi(t - s))^{N/2}  \frac{1}{\sqrt{\pi}} \text{erfc}\left(\frac{|y_N|}{2\sqrt{t - s}}\right) \right], \\
    &= 1 - \fq \frac{y_N}{|y_N|} \text{erfc}\left(\frac{|y_N|}{2\sqrt{t - s}}\right).
\end{align*}

The integral evaluates to:
$$ 1 - \fq\,\text{sgn}(y_N) \text{erfc}\left( \frac{|y_N|}{2\sqrt{t - s}} \right) $$
where $\text{sgn}(y_N) = \frac{y_N}{|y_N|}$ is the sign function.

\item[(iv)] Owing to the elementary inequality
$$
|x - y|^2 \leq (|x_N| + |y_N|)^2 + |\tilde{x} - \tilde{y}|^2,
$$
we deduce the pointwise bound
\begin{equation}
    \label{Gq-Gauss}
    \left|G_{\fq}(x, y, t)\right| \leq \frac{1 + |\fq|}{(4\pi t)^{N/2}} \exp\left( -\frac{|x - y|^2}{4t} \right).
\end{equation}
To conclude the estimate \eqref{G-q-Lr}, we use \eqref{Gq-Gauss} and compute:
$$
\left\| \frac{1 + |\fq|}{(4\pi t)^{N/2}} \exp\left( -\frac{|\cdot - y|^2}{4t} \right) \right\|_{L^r} 
= \left\| \frac{1 + |\fq|}{(4\pi t)^{N/2}} \exp\left( -\frac{|x - \cdot|^2}{4t} \right) \right\|_{L^r}
\leq C\, t^{-\frac{N}{2} \left(1 - \frac{1}{r} \right)},
$$
where the constant $ C > 0 $ depends only on $ N $ and $\fq$.

\end{itemize}
\end{proof}

\medskip
The next step is to isolate the one-dimensional interface effect in the normal
direction via the kernel $g_{\mathfrak q}$, which is the transition density of
skew Brownian motion.

\begin{prop}\label{lem:1d-semigroup}
For every $a,b\in\mathbb R$ and every $t,s>0$,
\begin{equation}\label{eq:chapman-kolmogorov-1d}
\int_{\mathbb R} g_{\fq}(a,z,t)\,g_{\fq}(z,b,s)\,dz = g_{\fq}(a,b,t+s),
\end{equation}
where
\begin{equation}
    \label{g-q}
    g_{\fq}(a,b,t)=\frac{1}{\sqrt{4\pi t}}
\Bigl[ 
   e^{-\frac{(a-b)^2}{4t}}
   -\sgn(b)\,\fq\,
   e^{-\frac{(|a|+|b|)^2}{4t}}
\Bigr].
\end{equation}
\end{prop}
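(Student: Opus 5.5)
The plan is a direct computation with $p_t(x)=(4\pi t)^{-1/2}e^{-x^2/(4t)}$, the one-dimensional heat kernel. The first step is to rewrite \eqref{g-q} piecewise, using that $|a|+|b|=|a+b|$ when $a,b$ have the same sign and $|a|+|b|=|a-b|$ when they have opposite signs. This gives
\[
g_{\fq}(a,b,t)=
\begin{cases}
p_t(a-b)-\sgn(b)\,\fq\,p_t(a+b), & ab>0,\\
\bigl(1-\sgn(b)\,\fq\bigr)\,p_t(a-b), & ab<0.
\end{cases}
\]
The left-hand side of \eqref{eq:chapman-kolmogorov-1d} then splits as $\int_0^\infty+\int_{-\infty}^0$. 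On each half-line both factors $g_{\fq}(a,z,t)$ and $g_{\fq}(z,b,s)$ are explicit linear combinations of $p_t(a\mp z)$ and $p_s(z\mp b)$. Their coefficients depend only on $\sgn(a)$, $\sgn(b)$, $\fq$, and the half-line containing $z$.

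The basic tool is the half-line Gaussian convolution identity, obtained by completing the square:
\[
\int_0^\infty p_t(x-z)\,p_s(z-y)\,dz=p_{t+s}(x-y)\,\Phi\!\Bigl(\tfrac{xs+yt}{\sqrt{2ts(t+s)}}\Bigr),
\]
where $\Phi$ is the standard normal distribution function. The analogous integral over $(-\infty,0)$ gives $\Phi(-\cdot)$. Replacing $x$ by $-x$ or $y$ by $-y$ turns $p_{t+s}(x-y)$ into $p_{t+s}(x+y)$ and flips the sign of the $\Phi$-argument accordingly. Each product that appears therefore becomes $p_{t+s}(a\pm b)$ times some $\Phi(\pm m)$ or $\Phi(\pm m')$, with $m=\frac{as+bt}{\sqrt{2ts(t+s)}}$ and $m'=\frac{as-bt}{\sqrt{2ts(t+s)}}$.

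By symmetry I will treat $a>0,b>0$ in full; the other three sign configurations are handled the same way. Note that $\sgn(z)$ enters through the second factor $g_{\fq}(z,b,s)$. Only the sign of $b$ matters there, which keeps the coefficient bookkeeping manageable.

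Next I will collect terms. The coefficient of $p_{t+s}(a-b)$ should combine into $\Phi(m)+\Phi(-m)=1$. The coefficient of $p_{t+s}(a+b)$ should combine into $-\fq\bigl(\Phi(m')+\Phi(-m')\bigr)$, up to terms carrying a factor $\fq^2$. Those $\fq^2$ terms must cancel between the two half-lines, and they do: the reflected products $p_t(a+z)p_s(z+b)$ on $(0,\infty)$ and $p_t(a-z)p_s(z-b)$ on $(-\infty,0)$ appear with opposite coefficients, since the factor $(1-\fq)$ against $(1+\fq)$ produces the pairing $\fq^2-\fq^2$. The result is $p_{t+s}(a-b)-\fq\,p_{t+s}(a+b)=g_{\fq}(a,b,t+s)$, as required.

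The main obstacle is this bookkeeping of the $\Phi$-coefficients across the four sign cases: every non-Gaussian $\Phi$ term must cancel exactly, and a sign slip is easy. If the direct cancellation becomes unwieldy, I would fall back on a uniqueness argument. Set $v(t,a)=\int g_{\fq}(a,z,t)g_{\fq}(z,b,s)\,dz$. Since $G_{\fq}$ factorizes into the $(N-1)$-dimensional Gaussian times $g_{\fq}$, Theorem~\ref{Fundamental} shows that $g_{\fq}$ is the fundamental solution of the one-dimensional transmission problem. Then $v$ solves that problem in $(t,a)$ with initial datum $g_{\fq}(\cdot,b,s)$, and so does $g_{\fq}(\cdot,b,t+s)$. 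The uniqueness statement in Theorem~\ref{Fundamental}, together with the Gaussian bounds of Proposition~\ref{Cruc-p} for the growth class, would then identify the two.
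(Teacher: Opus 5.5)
Your direct computation is correct; it differs from the paper's proof in how the terms are organised and combined.

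\textbf{The paper's route.} The paper never splits according to the signs of $a$ and $z$. It keeps the single formula $g_{\fq}(a,b,t)=p_t(a,b)-\sgn(b)\,\fq\,p_t(|a|,-|b|)$ and expands the product into four terms over all of $\R$. The $\fq^2$ term is discarded because its integrand is odd in $z$. The two $\fq$-linear terms are then merged, via the reflection identity $p_t(a,z)+p_t(a,-z)-p_t(|a|,-z)=p_t(|a|,z)$ for $z>0$, into one full-line convolution $\int_{\R}p_t(|a|,y)\,p_s(y,-b)\,dy=p_{t+s}(|a|,-b)$. Only $\sgn(b)$ has to be distinguished, and no half-line Gaussian integral is ever evaluated.

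\textbf{Your route.} You use the piecewise form of $g_{\fq}$, the explicit half-line formula with $\Phi$, and four sign cases. This is more mechanical but makes each contribution visible. Each identity $\Phi(m)+\Phi(-m)=1$ you use only records that two half-line integrals fill out a full-line convolution, after the substitution $z\mapsto -z$ where needed. So the $\Phi$-formula can be dropped, and your argument then reduces to essentially the paper's.

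\textbf{Points to fix when you write it out.}
\begin{enumerate}
\item Symmetry only gives $g_{\fq}(-a,-b,t)=g_{-\fq}(a,b,t)$. It therefore reduces the case $a<0$, $b<0$ to yours, but not the mixed-sign cases.
\item In the mixed-sign cases the pattern you describe does not occur. For $a>0>b$, both half-lines carry the common factor $1+\fq$. The two $\fq(1+\fq)$ terms cancel identically after $z\mapsto -z$, leaving $(1+\fq)\,p_{t+s}(a-b)=g_{\fq}(a,b,t+s)$. This case must be written out separately; it is short.
\item The degenerate values $a=0$ and $b=0$ are excluded by your piecewise formula. They need a limit $a\to 0$, respectively $b\to 0^{\pm}$; note that $g_{\fq}$ is only one-sidedly continuous in $b$ at $0$.
\item The explicit factor $\sgn(z)$ sits in the \emph{first} factor $g_{\fq}(a,z,t)$, since the sign is always taken of the second argument. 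In $g_{\fq}(z,b,s)$ the sign of $z$ only selects the branch.
\item Your fallback would need uniqueness for the one-dimensional transmission Cauchy problem in a Gaussian growth class. That is not what the uniqueness of the fundamental solution in Theorem~\ref{Fundamental} asserts. Since the direct computation closes, you can drop the fallback.
\end{enumerate}
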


\begin{proof}[Proof of Proposition~\ref{lem:1d-semigroup}]
Let
\[
p_t(x,y):=\frac{1}{\sqrt{4\pi t}}e^{-\frac{(x-y)^2}{4t}},\qquad t>0,
\]
denote the one-dimensional heat kernel on $\mathbb R$. Then $g_{\fq}$ admits the representation
\begin{equation}\label{eq:gq-p-form}
g_{\fq}(a,b,t)=p_t(a,b)-\sgn(b)\,\fq\,p_t\bigl(|a|,-|b|\bigr),
\end{equation}
since
\[
p_t\bigl(|a|,-|b|\bigr)=\frac{1}{\sqrt{4\pi t}}e^{-\frac{(|a|+|b|)^2}{4t}}.
\]
Fix $a,b\in\mathbb R$ and $t,s>0$. Expanding the product in the integral and using Tonelli's theorem (all arising terms are nonnegative Gaussian products and hence integrable), we obtain
\begin{align*}
\int_{\mathbb R} g_{\fq}(a,z,t)\,g_{\fq}(z,b,s)\,dz
&=\int_{\mathbb R} p_t(a,z)p_s(z,b)\,dz
-\sgn(b)\fq\int_{\mathbb R} p_t(a,z)p_s(|z|,-|b|)\,dz \\
&\quad -q\int_{\mathbb R}\sgn(z)\,p_t(|a|,-|z|)\,p_s(z,b)\,dz \\
&\quad +\sgn(b)\fq^2\int_{\mathbb R}\sgn(z)\,p_t(|a|,-|z|)\,p_s(|z|,-|b|)\,dz.
\end{align*}
The last integral vanishes by antisymmetry: splitting into $z>0$ and $z<0$ and applying the change of variables $z\mapsto -z$ shows that the two half-line contributions cancel. Hence
\begin{equation}\label{eq:reduced-proof}
\int_{\mathbb R} g_{\fq}(a,z,t)\,g_{\fq}(z,b,s)\,dz
=I_0-\sgn(b)\fq\,I_1-q\,I_2,
\end{equation}
where
\[
I_0:=\int_{\mathbb R} p_t(a,z)p_s(z,b)\,dz,\qquad
I_1:=\int_{\mathbb R} p_t(a,z)p_s(|z|,-|b|)\,dz,
\]
and
\[
I_2:=\int_{\mathbb R}\sgn(z)\,p_t(|a|,-|z|)\,p_s(z,b)\,dz.
\]
By the Gaussian semigroup property,
\[
I_0=p_{t+s}(a,b).
\]
It remains to show that
\begin{equation}\label{eq:remaining-proof}
\sgn(b)\,I_1+I_2=\sgn(b)\,p_{t+s}\bigl(|a|,-|b|\bigr).
\end{equation}

Assume first that $b>0$. Splitting at $0$ and using the substitution $z\mapsto -z$ on $(-\infty,0)$ yields
\[
I_1=\int_0^\infty \bigl[p_t(a,z)+p_t(a,-z)\bigr]\,p_s(z,-b)\,dz.
\]
Similarly, decomposing the sign in $I_2$ and again reducing to $(0,\infty)$ gives
\[
I_2=\int_0^\infty p_t(|a|,-z)\,\bigl[p_s(z,b)-p_s(z,-b)\bigr]\,dz,
\]
where we used the elementary symmetry $p_s(-z,b)=p_s(z,-b)$ for $z>0$. For $z>0$ we also have the identity
\begin{equation}\label{eq:reflection-identity}
p_t(a,z)+p_t(a,-z)-p_t(|a|,-z)=p_t(|a|,z),
\end{equation}
which is immediate if $a\ge 0$ and follows from $p_t(a,z)=p_t(|a|,-z)$ when $a<0$. Adding the expressions for $I_1$ and $I_2$ and using \eqref{eq:reflection-identity} gives
\[
I_1+I_2=\int_0^\infty p_t(|a|,z)p_s(z,-b)\,dz+\int_0^\infty p_t(|a|,-z)p_s(z,b)\,dz.
\]
Recognizing the right-hand side as the full-line convolution
\[
\int_{\mathbb R} p_t(|a|,y)p_s(y,-b)\,dy
\]
(split into $y>0$ and $y<0$) and applying the semigroup property yields
\[
I_1+I_2=p_{t+s}(|a|,-b)=p_{t+s}\bigl(|a|,-|b|\bigr),
\]
which proves \eqref{eq:remaining-proof} for $b>0$.

If $b<0$, then $\sgn(b)=-1$ and $-|b|=b$. The same half-line reductions apply, with $-b>0$ replacing $b>0$, and the same reflection identity \eqref{eq:reflection-identity} gives
\[
I_1-I_2=p_{t+s}(|a|,b)=p_{t+s}\bigl(|a|,-|b|\bigr),
\]
which yields \eqref{eq:remaining-proof} also for $b<0$. The case $b=0$ follows by continuity.

Combining \eqref{eq:reduced-proof} with $I_0=p_{t+s}(a,b)$ and \eqref{eq:remaining-proof}, we conclude that
\[
\int_{\mathbb R} g_{\fq}(a,z,t)\,g_{\fq}(z,b,s)\,dz
=p_{t+s}(a,b)-\sgn(b)\fq\,p_{t+s}\bigl(|a|,-|b|\bigr)
=g_{\fq}(a,b,t+s),
\]
by \eqref{eq:gq-p-form} with $t$ replaced by $t+s$. This completes the proof.
\end{proof}

\medskip
We now lift the one-dimensional semigroup identity to $\R^N$.

\begin{prop}
    \label{lem:semigroup}
The fundamental solution $G_{\mathfrak q}$ satisfies the Chapman--Kolmogorov identity
\begin{equation}
    \label{chapkol}
    G_{\mathfrak q}(x,y,t+s)
=\int_{\R^N}G_{\mathfrak q}(x,z,t)\,G_{\mathfrak q}(z,y,s)\,dz.
\end{equation}
\end{prop}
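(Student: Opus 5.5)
The plan is to reduce the $N$-dimensional Chapman--Kolmogorov identity~\eqref{chapkol} to the one-dimensional identity of Proposition~\ref{lem:1d-semigroup} by factorizing the kernel into tangential and normal parts. The first step is to observe from~\eqref{Fund-sol} that, writing $x=(\tilde x,x_N)$ and $y=(\tilde y,y_N)$ and using $|x-y|^2=|\tilde x-\tilde y|^2+(x_N-y_N)^2$, the common factor $\exp(-|\tilde x-\tilde y|^2/4t)$ can be pulled out of both Gaussians. This gives
\begin{equation*}
G_{\fq}(x,y,t)=h_{N-1}(\tilde x-\tilde y,t)\,g_{\fq}(x_N,y_N,t),
\qquad h_{N-1}(w,t):=(4\pi t)^{-\frac{N-1}{2}}e^{-\frac{|w|^2}{4t}},
\end{equation*}
where $g_{\fq}$ is the one-dimensional skew kernel~\eqref{g-q}. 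The normalizing constants match because $(4\pi t)^{-N/2}=(4\pi t)^{-(N-1)/2}(4\pi t)^{-1/2}$, and the sign factor $\sgn(y_N)$ depends only on the normal variable.

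Next I would insert this factorization into the right-hand side of~\eqref{chapkol}, writing $z=(\tilde z,z_N)$. The integrand becomes the product
\begin{equation*}
\bigl[h_{N-1}(\tilde x-\tilde z,t)\,h_{N-1}(\tilde z-\tilde y,s)\bigr]\cdot\bigl[g_{\fq}(x_N,z_N,t)\,g_{\fq}(z_N,y_N,s)\bigr].
\end{equation*}
Fubini--Tonelli then splits the integral into an integral over $\tilde z\in\R^{N-1}$ times an integral over $z_N\in\R$. Absolute integrability is justified by the bound $|g_{\fq}(a,b,t)|\le(1+|\fq|)p_t(a,b)$, which follows as in~\eqref{Gq-Gauss}, so the integrand is dominated by a product of Gaussians.

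The tangential integral equals $h_{N-1}(\tilde x-\tilde y,t+s)$ by the classical semigroup property of the $(N-1)$-dimensional heat kernel, which is Gaussian convolution. The normal integral equals $g_{\fq}(x_N,y_N,t+s)$ by~\eqref{eq:chapman-kolmogorov-1d}. Multiplying the two and applying the factorization again at time $t+s$ gives $G_{\fq}(x,y,t+s)$.

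There is no real obstacle here. The only delicate point is checking the factorization carefully, in particular that the reflected term also carries exactly the tangential factor $e^{-|\tilde x-\tilde y|^2/4t}$, which it does by~\eqref{Fund-sol}. The sets $\{y_N=0\}$ and $\{z_N=0\}$, where $\sgn$ is ambiguous, have measure zero and do not affect the integrals. The case $y_N=0$ in the identity itself follows by continuity, as in the one-dimensional proof.
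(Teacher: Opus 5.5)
Your proposal is correct and follows the paper's proof: factor $G_{\fq}$ as the $(N-1)$-dimensional heat kernel in $\tilde x-\tilde y$ times the one-dimensional skew kernel $g_{\fq}$, split the $z$-integral by Fubini, then apply the Gaussian convolution identity and Proposition~\ref{lem:1d-semigroup}. Your explicit domination $|g_{\fq}(a,b,t)|\le(1+|\fq|)\,p_t(a,b)$, which justifies Fubini, is a small addition to what the paper states.
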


\begin{proof}[Proof of Proposition~\ref{lem:semigroup}]
Fix $t,s>0$ and $x,y\in\mathbb R^{N}$.
We use the following factorization of $G_{\mathfrak q}$:
\begin{equation}
    \label{factor-G-q}
 G_{\mathfrak q}(x,y,\tau)=p^{(N-1)}_\tau(\tilde x-\tilde y)\,g_{\mathfrak q}(x_N,y_N,\tau),
\qquad \tau>0,   
\end{equation}
where $p^{(N-1)}_\tau$ denotes the $(N\!-\!1)$--dimensional heat kernel,
\[
p^{(N-1)}_\tau(\xi)=(4\pi \tau)^{-\frac{N-1}{2}}e^{-\frac{|\xi|^2}{4\tau}},
\]
and $g_{\mathfrak q}$ is defined in~\eqref{g-q}.

For $z\in\mathbb R^{N}$, by~\eqref{factor-G-q} we can write
\[
G_{\mathfrak q}(x,z,t)\,G_{\mathfrak q}(z,y,s)
=
p^{(N-1)}_{t}(\tilde x-\tilde z)\,g_{\mathfrak q}(x_N,z_N,t)\;
p^{(N-1)}_{s}(\tilde z-\tilde y)\,g_{\mathfrak q}(z_N,y_N,s).
\]
Therefore, applying Fubini's theorem yields
\begin{equation}
    \label{Fubin}
    \begin{split}
            \int_{\mathbb R^N}G_{\mathfrak q}(x,z,t)\,G_{\mathfrak q}(z,y,s)\,dz
    =&
    \left(\int_{\mathbb R^{N-1}} p^{(N-1)}_{t}(\tilde x-\tilde z)\,p^{(N-1)}_{s}(\tilde z-\tilde y)\,d\tilde z\right)\\
    &\times 
\left(\int_{\mathbb R} g_{\mathfrak q}(x_N,z_N,t)\,g_{\mathfrak q}(z_N,y_N,s)\,dz_N\right).
    \end{split}
\end{equation}

Since the Gaussian heat kernel satisfies the standard convolution identity,
\[
\int_{\mathbb R^{N-1}} p^{(N-1)}_{t}(\tilde x-\tilde z)\,p^{(N-1)}_{s}(\tilde z-\tilde y)\,d\tilde z
=
p^{(N-1)}_{t+s}(\tilde x-\tilde y),
\]
and using~\eqref{eq:chapman-kolmogorov-1d}, we conclude~\eqref{chapkol}.
\end{proof}

\medskip

We define the linear semigroup associated with $\mathscr L$ by
\begin{equation}\label{eq:semigroup}
    \mathbf{T}(t)\varphi(x)
    =\bigl[e^{t\mathscr L}\varphi\bigr](x)
    :=\int_{\R^N} G_{\fq}(x,y,t)\,\varphi(y)\,dy,
    \qquad x\in\R^N,\ t>0,
\end{equation}
with $\mathbf{T}(0)=I$.  The next proposition collects the analytic
properties of $\{\mathbf{T}(t)\}_{t\ge 0}$ that are used throughout the
paper: $L^q$--$L^r$ smoothing, contraction on $L^p$ and on $C_0$, and
the weak formulation of $\partial_t u=\mathscr L u$.

\begin{prop}[Properties of the linear semigroup]\label{prop:semigroup}
Assume $N\ge 2$, $|\fq|\le 1$ and, for $\varphi$ in a suitable space, set
\begin{equation}\label{eq:u-phi}
    u(t,x):=\mathbf{T}(t)\varphi(x).
\end{equation}
Then:
\begin{enumerate}
\item[\textup{(a)}] \emph{($L^q$--$L^r$ smoothing.)}
For every $1\le q\le r\le\infty$ and every $\varphi\in L^q(\R^N)$,
\begin{equation}\label{eq:Lq-Lr-est}
    \|\mathbf{T}(t)\varphi\|_{L^r}
    \le C\,t^{-\frac{N}{2}\bigl(\frac{1}{q}-\frac{1}{r}\bigr)}\,
        \|\varphi\|_{L^q},
    \qquad t>0,
\end{equation}
where $C=C(N,\fq)$.

\item[\textup{(b)}] For every $1\le p<\infty$, $\{\mathbf{T}(t)\}_{t\ge 0}$
is a strongly continuous contraction semigroup on $L^p(\R^N)$.

\item[\textup{(c)}] $\{\mathbf{T}(t)\}_{t\ge 0}$ is a strongly continuous
contraction semigroup on $C_0(\R^N)$.

\item[\textup{(d)}] \emph{(Weak formulation.)}
For every $\varphi\in L^p(\R^N)$ with $1\le p<\infty$ and every
$\psi\in C_c^\infty\bigl((0,\infty)\times\R^N\bigr)$,
\begin{equation}\label{eq:weak-formulation1}
    \int_0^\infty\!\!\int_{\R^N}
        u\,(\partial_t\psi+\Delta\psi)\,dx\,dt
    \;+\;
    2\fq\!\int_0^\infty\!\!\int_{S}
        u(t,\tilde x,0)\,
        \partial_{x_N}\psi(t,\tilde x,0)\,
        d\sigma(\tilde x)\,dt
    =0.
\end{equation}
In particular, $\partial_t u=\mathscr L u$ in
$\mathcal D'\bigl((0,\infty)\times\R^N\bigr)$, where
$\mathscr L=\Delta+2\fq\,\delta_S\,\partial_{x_N}$.
\end{enumerate}
\end{prop}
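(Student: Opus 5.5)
We prove the four items in order, using only positivity, the Gaussian comparison of Proposition~\ref{Cruc-p}, the normalizations \eqref{eq:G-q-y}--\eqref{eq:G-q-x} and the Chapman--Kolmogorov identity of Proposition~\ref{lem:semigroup}.

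For (a), we use the pointwise bound $|G_{\fq}(x,y,t)|\le (1+|\fq|)\,g(x-y,t)$ from \eqref{eq:gauss-compare}. It gives $|\mathbf T(t)\varphi|\le (1+|\fq|)\,g(\cdot,t)*|\varphi|$. Young's inequality, with $1+\frac1r=\frac1s+\frac1q$ and $\|g(\cdot,t)\|_{L^s}=C\,t^{-\frac N2(1-\frac1s)}$, then yields \eqref{eq:Lq-Lr-est} with $C=C(N,\fq)$. No translation invariance of $G_{\fq}$ is needed, only domination by a convolution kernel.

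For (b) and (c), the semigroup law $\mathbf T(t+s)=\mathbf T(t)\mathbf T(s)$ follows from \eqref{chapkol} and Fubini.
\begin{itemize}
\item[$L^\infty$, $C_0$:] contractivity comes from $G_{\fq}\ge0$ and $\int G_{\fq}(x,y,t)\,dy=1$, so $|\mathbf T(t)\varphi(x)|\le\|\varphi\|_\infty$.
\item[$L^p$, $1\le p<\infty$:] Jensen's inequality for the probability measure $G_{\fq}(x,y,t)\,dy$ gives $|\mathbf T(t)\varphi|^p\le \mathbf T(t)|\varphi|^p$. Integrating in $x$ and using Fubini and \eqref{eq:G-q-x},
\[
\|\mathbf T(t)\varphi\|_{L^p}^p\le\int_{\R^N}|\varphi(y)|^p\Bigl(1-\sgn(y_N)\,\fq\,\operatorname{erfc}\bigl(\tfrac{|y_N|}{2\sqrt t}\bigr)\Bigr)dy .
\]
\end{itemize}

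Strong continuity on $C_0$ comes from uniform continuity of $\varphi$ and the Gaussian tail. We split $\mathbf T(t)\varphi(x)-\varphi(x)=\int G_{\fq}(x,y,t)(\varphi(y)-\varphi(x))\,dy$ using \eqref{eq:G-q-y}. The region $|y-x|<\delta$ is then controlled by the modulus of continuity, and the complement by $(1+|\fq|)\int_{|z|\ge\delta}g(z,t)\,dz\to0$. Preservation of $C_0$ follows from the same domination. Strong continuity on $L^p$ follows by density of $C_c$ together with the uniform bound on $\|\mathbf T(t)\|_{L^p\to L^p}$ just obtained.

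For (d), we first take $\varphi\in C_c^\infty$. Directly from \eqref{Fund-sol}, $u=\mathbf T(t)\varphi$ is smooth in each open half-space and solves $\partial_tu=\Delta u$ there. It is continuous across $\bbs$, and the one-sided normal derivatives obey \eqref{eq:transmission-intro}. Indeed, for $x_N\to0^\pm$ the reflected term contributes $\mp\fq$ times the normal derivative of the symmetric part, and we would check this from the explicit one-dimensional kernel $g_{\fq}$.

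We then integrate $u(\partial_t\psi+\Delta\psi)$ by parts on $\{x_N>0\}$ and $\{x_N<0\}$ separately. Using continuity of $u$ across $\bbs$, the terms $u\,\partial_{x_N}\psi$ cancel. What remains on $\bbs$ is $\int_0^\infty\!\int_{\bbs}\bigl(\partial_{x_N}u(0^+)-\partial_{x_N}u(0^-)\bigr)\psi\,d\sigma\,dt$. We express this jump, via the transmission relation and the structure of $\mathscr L$ (equivalently $\mathscr L^*$ acting on $\psi$), as $-2\fq\int\!\int_{\bbs}u\,\partial_{x_N}\psi\,d\sigma\,dt$, which gives \eqref{eq:weak-formulation1}. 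General $\varphi\in L^p$ is reached by approximation: the traces $u(t,\tilde x,0)$ are controlled in $L^1_{loc}$ by the $L^\infty$ smoothing from (a) for $t$ in the support of $\psi$ (which stays away from $t=0$), so every term passes to the limit.

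The main obstacle is the contraction claim in (b) for $p<\infty$. The weight $1-\sgn(y_N)\fq\operatorname{erfc}(\cdot)$ produced by \eqref{eq:G-q-x} is at most $1$ only on the side where $\sgn(y_N)\fq\ge0$. Near $\bbs$ on the other side it approaches $1+|\fq|$, so Jensen plus Fubini by themselves give $\|\mathbf T(t)\|_{L^p\to L^p}\le(1+|\fq|)^{1/p}$ rather than $1$. To reach the stated contraction we would first try to exploit the semigroup structure: use $\mathbf T(t)=\mathbf T(t/n)^n$ together with the exact mass balance $\int\mathbf T(t)\varphi\,dx$ computed from \eqref{eq:G-q-x}, aiming to show that the gain on one side compensates the loss on the other for $|\varphi|^p$. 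This is the step whose validity we would scrutinize most carefully, by testing it on data concentrated near $\bbs$ on the unfavourable side. The sign bookkeeping in (d) is the second delicate point, and we would verify it directly against the one-dimensional kernel $g_{\fq}$.
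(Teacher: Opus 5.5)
Your proof of (a), bounding by $(1+|\fq|)\,g(\cdot,t)*|\varphi|$ and applying Young, is correct and more direct than the paper's route through an $L^\infty$ bound, an $L^q$ bound and interpolation. Your arguments for (c), and for strong continuity in (b) via density plus a uniform operator bound, are also fine. The two steps where your plan stalls cannot be repaired, however: for $\fq\neq0$ the statement itself is false there. In both cases your own computations already contain the counterexample.

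\emph{Contraction in (b).} For $\varphi\ge0$ your Jensen--Fubini computation is an equality at $p=1$, by Tonelli and $G_{\fq}\ge0$:
\[
\|\mathbf T(t)\varphi\|_{L^1}=\int_{\R^N}\varphi(y)\Bigl(1-\sgn(y_N)\,\fq\,\operatorname{erfc}\bigl(\tfrac{|y_N|}{2\sqrt t}\bigr)\Bigr)dy .
\]
\begin{itemize}
\item Every nonnegative $\varphi\not\equiv0$ supported in $\{\sgn(\fq)\,y_N<0\}$ therefore satisfies $\|\mathbf T(t)\varphi\|_{L^1}>\|\varphi\|_{L^1}$ for all $t>0$. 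In fact $\|\mathbf T(t)\|_{L^1\to L^1}=1+|\fq|$.
\item No argument via $\mathbf T(t)=\mathbf T(t/n)^n$ or mass balance can change this, so the route you sketch cannot succeed.
\item The paper's Step~2 gets the contraction from ``Jensen for the probability density $G_{\fq}(x,\cdot,t)$''. This is exactly the invalid step you flagged; its own Step~1 only uses $\int G_{\fq}\,dx\le 3$.
\item What is true is $\|\mathbf T(t)\|_{L^p\to L^p}\le(1+|\fq|)^{1/p}$, and this suffices for your strong-continuity argument.
\item For $|\fq|<1$ a genuine contraction holds on $L^p(m\,dx)$, with $m=1-\fq$ on $\{x_N>0\}$ and $m=1+\fq$ on $\{x_N<0\}$. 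This follows from the detailed balance $m(x)G_{\fq}(x,y,t)=m(y)G_{\fq}(y,x,t)$, which gives $\int m(x)G_{\fq}(x,y,t)\,dx=m(y)$.
\end{itemize}

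\emph{The weak formulation (d).} Your Green computation is right. Writing $[\partial_{x_N}u]$ for the jump $\partial_{x_N}u(t,\tilde x,0^+)-\partial_{x_N}u(t,\tilde x,0^-)$, it gives $\int\!\!\int u(\partial_t\psi+\Delta\psi)=\int\!\!\int_{\bbs}[\partial_{x_N}u]\,\psi\,d\sigma\,dt$.
\begin{itemize}
\item This involves only $\psi|_{\bbs}$, whereas the interface term in \eqref{eq:weak-formulation1} involves only $\partial_{x_N}\psi|_{\bbs}$. These traces are independent, so ``expressing the jump as $-2\fq\int\!\!\int_{\bbs}u\,\partial_{x_N}\psi$'' is impossible.
\item Concretely, take $\psi=\eta(t)\chi(x)\,x_N$ with $\eta\in C_c^\infty(0,\infty)$ and $\chi\in C_c^\infty(\R^N)$ nonnegative, and $\chi(\cdot,0)\not\equiv0$. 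Your boundary term vanishes, but the left side of \eqref{eq:weak-formulation1} equals $2\fq\int\eta(t)\int u(t,\tilde x,0)\chi(\tilde x,0)\,d\tilde x\,dt$.
\item For nontrivial $\varphi\ge0$ and $0<|\fq|<1$ this is nonzero, by the lower bound in \eqref{eq:gauss-compare}.
\item Differentiating $g_{\fq}$ at $a=0^\pm$ gives $\partial_{x_N}u(t,\tilde x,0^\pm)=(1\pm\fq)K(t,\tilde x)$ for an explicit $K$. So the reflected term contributes $\pm\fq K$, not $\mp\fq K$ as you wrote; only with $\pm$ does \eqref{eq:transmission-intro} hold.
\item Hence $[\partial_{x_N}u]=2\fq\,\overline{\partial_{x_N}u}$, where the bar is the mean of the one-sided derivatives. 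The correct identity is
\[
\int_0^\infty\!\!\int_{\R^N}u\,(\partial_t\psi+\Delta\psi)\,dx\,dt=2\fq\int_0^\infty\!\!\int_{\bbs}\overline{\partial_{x_N}u}\;\psi\,d\sigma\,dt .
\]
\item The paper does not prove (d) either: Step~4 simply postulates \eqref{eq:G-distrib}.
\item A surface term $u|_{\bbs}\,\partial_{x_N}\psi$ is the structure of the forward (Fokker--Planck) equation. It is satisfied by $\rho(t,\cdot)=\int\varphi(x)G_{\fq}(x,\cdot,t)\,dx$, not by $\mathbf T(t)\varphi$. Here $\rho|_{\bbs}$ must be read as the average of its two unequal one-sided traces, and for the kernel \eqref{Fund-sol} the coefficient is $-2\fq$.
\end{itemize}
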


\begin{proof}[Proof of Proposition~\ref{prop:semigroup}]
The proof proceeds in four self-contained steps.

\medskip
\noindent\textbf{Step 1: $L^q$--$L^r$ smoothing.}
We follow an argument similar to that in \cite{FKS2019}.
By H\"older's inequality and the kernel bound \eqref{G-q-Lr},
\begin{equation}\label{eq:L-infty-bound}
    \|\mathbf{T}(t)\varphi\|_{L^\infty}
    \le C\,t^{-\frac{N}{2q}}\|\varphi\|_{L^q},
    \qquad 1\le q\le\infty,
\end{equation}
with $C=C(N,\fq)$.
For the $L^q$--$L^q$ bound, we use that
$G_{\fq}(x,\cdot,t)$ is a probability density in $y$ (namely
$\int_{\R^N}G_{\fq}(x,y,t)\,dy=1$, see
Proposition~\ref{Cruc-p}~(ii)), and apply Jensen's inequality together
with Fubini's theorem.  For $1\le q<\infty$,
\begin{equation}\label{eq:Jensen}
\begin{aligned}
\|\mathbf{T}(t)\varphi\|_{L^q}^q
&=\int_{\R^N}\!\Bigl|\!\int_{\R^N}\!
        G_{\fq}(x,y,t)\varphi(y)\,dy\Bigr|^{q}dx\\
&\le\int_{\R^N}\!\!\int_{\R^N}\!
        G_{\fq}(x,y,t)\,|\varphi(y)|^{q}\,dy\,dx\\
&=\int_{\R^N}|\varphi(y)|^{q}
        \Bigl(\int_{\R^N}G_{\fq}(x,y,t)\,dx\Bigr)dy
\;\le\;3\,\|\varphi\|_{L^q}^q,
\end{aligned}
\end{equation}
where in the last step we used Proposition~\ref{Cruc-p}~(iii) to bound
\[
0\;\le\;\int_{\R^N}G_{\fq}(x,y,t)\,dx
   =1-\frac{y_N}{|y_N|}\,\fq\,
     \mathrm{erfc}\!\Bigl(\frac{|y_N|}{2\sqrt{t}}\Bigr)
   \;\le\;3.
\]
Interpolating between \eqref{eq:L-infty-bound} and \eqref{eq:Jensen}
yields, for every $1\le q\le r\le\infty$,
\begin{equation}\label{eq:interpolation}
    \|\mathbf{T}(t)\varphi\|_{L^r}
    \le \|\mathbf{T}(t)\varphi\|_{L^\infty}^{\,1-\frac{q}{r}}
        \|\mathbf{T}(t)\varphi\|_{L^q}^{\,\frac{q}{r}}
    \le (C+3)\,t^{-\frac{N}{2}\bigl(\frac{1}{q}-\frac{1}{r}\bigr)}
        \|\varphi\|_{L^q},
\end{equation}
which is \eqref{eq:Lq-Lr-est}.

\medskip
\noindent\textbf{Step 2: Semigroup property and $L^p$/$L^\infty$
contraction.}
Tonelli's theorem (justified by the Gaussian bounds on
$G_{\fq}$) and the Chapman--Kolmogorov identity for
$G_{\fq}$,
\[
    \int_{\R^N}G_{\fq}(x,z,t)\,G_{\fq}(z,y,s)\,dz
    =G_{\fq}(x,y,t+s),
\]
yield the semigroup property:
\begin{eqnarray*}
\mathbf{T}(t)\mathbf{T}(s)\varphi(x)
    &=&\!\int_{\R^N}\!\!\!\int_{\R^N}\!
        G_{\fq}(x,z,t)G_{\fq}(z,y,s)\varphi(y)\,dy\,dz\\
    &=&\!\int_{\R^N}\!G_{\fq}(x,y,t+s)\varphi(y)\,dy\\
    &=&\mathbf{T}(t+s)\varphi(x).
\end{eqnarray*}
The contraction $\|\mathbf{T}(t)\varphi\|_{L^p}\le\|\varphi\|_{L^p}$
($1\le p<\infty$) follows from Jensen's inequality applied to the
probability density $G_{\fq}(x,\cdot,t)$ in the $y$-variable
(Proposition~\ref{Cruc-p}~(ii)).
For $\varphi\in C_0(\R^N)\subset L^\infty(\R^N)$, positivity of
$G_{\fq}$ together with the same normalisation gives the
$L^\infty$-contraction
\[
    |\mathbf{T}(t)\varphi(x)|
    \le \|\varphi\|_{L^\infty}\!\int_{\R^N}\!G_{\fq}(x,y,t)\,dy
    =\|\varphi\|_{L^\infty}.
\]

\medskip
\noindent\textbf{Step 3: Strong continuity at $t=0$ on $L^p$ and on
$C_0$ (finishes (b) and (c)).}

\smallskip
\emph{(i) The case $L^p$, $1\le p<\infty$.}
Take first $\varphi\in C_c^\infty(\R^N)$.  Using
$\int G_{\fq}(x,y,t)\,dy=1$ in the form
\[
    \mathbf{T}(t)\varphi(x)-\varphi(x)
    =\!\int_{\R^N}\!G_{\fq}(x,y,t)\bigl(\varphi(y)-\varphi(x)\bigr)dy,
\]
fix $\varepsilon>0$ and choose $\delta>0$ such that
$|\varphi(y)-\varphi(x)|\le\varepsilon$ whenever $|y-x|<\delta$.
Splitting the integral on $|y-x|<\delta$ and $|y-x|\ge\delta$,
\[
    |\mathbf{T}(t)\varphi(x)-\varphi(x)|
    \le\varepsilon
        +2\|\varphi\|_{L^\infty}
        \!\int_{|y-x|\ge\delta}\!G_{\fq}(x,y,t)\,dy.
\]
By the Gaussian decay of $G_{\fq}$
(Proposition~\ref{Cruc-p}~(i)) the tail integral tends to $0$ as
$t\downarrow 0$, uniformly in $x$.  Hence
$\mathbf{T}(t)\varphi\to\varphi$ pointwise and, by dominated convergence,
in $L^p$.  For general $\varphi\in L^p$, density of $C_c^\infty$
together with the $L^p$-contraction of Step~2 gives
$\|\mathbf{T}(t)\varphi-\varphi\|_{L^p}\to 0$ as $t\downarrow 0$.

\smallskip
\emph{(ii) The case $C_0$.}
Continuity of $\mathbf{T}(t)\varphi$ in $x$ for $t>0$ follows from the
continuity of $G_{\fq}$ in $x$ and dominated convergence.  For
the decay at infinity, given $\varepsilon>0$ choose $R>0$ with
$|\varphi(y)|<\varepsilon$ for $|y|>R$; then
\[
    |\mathbf{T}(t)\varphi(x)|
    \le \int_{|y|\le R}\!G_{\fq}(x,y,t)|\varphi(y)|\,dy
        +\varepsilon,
\]
and Gaussian decay forces the first term to $0$ as $|x|\to\infty$.
Hence $\mathbf{T}(t)\varphi\in C_0(\R^N)$.
Strong continuity at $t=0$ in $\|\cdot\|_{L^\infty}$ then follows from
the standard $C_0$-semigroup argument: choose $R$ so that
$|\varphi(x)|<\varepsilon$ for $|x|>R$, prove uniform convergence on
$\overline{B(0,R)}$ as in (i), and bound the exterior by
\[
    |\mathbf{T}(t)\varphi(x)-\varphi(x)|
    \le|\mathbf{T}(t)\varphi(x)|+|\varphi(x)|\le 2\varepsilon
    \qquad\text{for }|x|>R.
\]

\medskip
\noindent\textbf{Step 4: Weak formulation (proves (d)).}
For each fixed $y\in\R^N$, the kernel $G_{\fq}(\cdot,y,\cdot)$
is the fundamental solution of the forward linear problem associated
with $\mathscr L$.  Hence, for every
$\psi\in C_c^\infty\bigl((0,\infty)\times\R^N\bigr)$ it satisfies the
distributional identity
\begin{equation}\label{eq:G-distrib}
    \int_0^\infty\!\!\int_{\R^N}\!
        G_{\fq}(x,y,t)\,(\partial_t\psi+\Delta\psi)(t,x)\,dx\,dt
    +2\fq\!\int_0^\infty\!\!\int_{S}\!
        G_{\fq}(\tilde x,0;y,t)\,
        \partial_{x_N}\psi(t,\tilde x,0)\,d\sigma(\tilde x)\,dt
    =0.
\end{equation}
Multiplying \eqref{eq:G-distrib} by $\varphi(y)$, integrating in $y$,
and exchanging integrals via Tonelli (justified by the bounds on
$G_{\fq}$ and the compact support of $\psi$), we recognise the
inner integrals as $u(t,x)$ and $u(t,\tilde x,0)$ and obtain
\eqref{eq:weak-formulation1}.  This is exactly
$\partial_t u=\mathscr L u$ in
$\mathcal D'\bigl((0,\infty)\times\R^N\bigr)$.
\end{proof}

% ---------------------------------------------------------------------
% Probabilistic preparations
% ---------------------------------------------------------------------

We next recall Dynkin's martingale characterization of the generator of
a Feller semigroup.

\begin{lem}\label{lem:Dynkin}
Let $X$ be a conservative Feller process with semigroup $\mathbf{T}(t)$ and
generator $A$.  If $f\in\mathrm{Dom}(A)$, then
\[
    f(X_t)-f(X_0)-\int_0^t Af(X_s)\,ds
\]
is a martingale.
\end{lem}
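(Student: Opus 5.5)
The argument is the standard Dynkin formula for Feller processes: differentiate the semigroup along the generator and then use the Markov property to turn conditional expectations into semigroup evaluations. Write $M_t:=f(X_t)-f(X_0)-\int_0^t Af(X_s)\,ds$ and let $(\mathcal F_t)$ be the natural filtration of $X$. By Propositions~\ref{Cruc-p} and~\ref{prop:semigroup}, $\{\mathbf{T}(t)\}$ is a conservative positive contraction semigroup on $C_0(\R^N)$. The process is conservative and Feller, so $\mathbb E^x[h(X_t)]=\mathbf{T}(t)h(x)$ for $h\in C_0$.

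\emph{Integrability and adaptedness.} Since $f$ and $Af$ belong to $C_0(\R^N)$, they are bounded. Hence $|M_t|\le 2\|f\|_\infty+t\|Af\|_\infty$, so $M_t$ is integrable. It is also adapted, because $s\mapsto Af(X_s)$ is progressively measurable; this uses continuity of the sample paths, or right-continuity, which suffices.

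\emph{Key identity.} For $f\in\mathrm{Dom}(A)$ we use the standard semigroup fact
\[
\mathbf{T}(t)f-f=\int_0^t \mathbf{T}(r)Af\,dr ,
\]
with the integral taken in $C_0$. It follows from $\frac{d}{dr}\mathbf{T}(r)f=\mathbf{T}(r)Af$, which holds by strong continuity and the definition of the generator.

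\emph{Martingale property.} Fix $s<t$. Then
\[
M_t-M_s=f(X_t)-f(X_s)-\int_s^t Af(X_r)\,dr .
\]
By the Markov property, $\mathbb E[f(X_t)\mid\mathcal F_s]=\mathbf{T}(t-s)f(X_s)$. For the integral term we combine Fubini (justified by boundedness) with the Markov property, obtaining
\[
\mathbb E\Bigl[\int_s^t Af(X_r)\,dr\,\Big|\,\mathcal F_s\Bigr]=\int_0^{t-s}\mathbf{T}(r)Af(X_s)\,dr .
\]
Applying the key identity with $t-s$ in place of $t$ gives
\[
\mathbb E[M_t-M_s\mid\mathcal F_s]=\mathbf{T}(t-s)f(X_s)-f(X_s)-\bigl(\mathbf{T}(t-s)f-f\bigr)(X_s)=0 .
\]

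\emph{Main obstacle.} The only delicate point is exchanging the conditional expectation with the time integral. This requires joint measurability of $(r,\omega)\mapsto Af(X_r(\omega))$, which path regularity provides, together with a conditional Fubini theorem for bounded integrands.
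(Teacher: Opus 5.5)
Your proposal is correct and is the standard textbook proof: the identity $\mathbf{T}(t)f-f=\int_0^t\mathbf{T}(r)Af\,dr$ in $C_0$, the Markov property, and a conditional Fubini step for the bounded integrand $Af(X_r)$. The paper does not prove the lemma but cites Ethier--Kurtz and Applebaum, where this is essentially the argument. One adjustment: you need not invoke Propositions~\ref{Cruc-p} and~\ref{prop:semigroup}, since the argument uses only the Feller and Markov properties, and that is the generality in which the lemma is stated.
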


For a proof of Lemma~\ref{lem:Dynkin}, see, e.g., Ethier--Kurtz
\cite[Chapter~4]{EthierKurtz} or Applebaum
\cite[Theorem~6.1.7]{Applebaum}.

We also recall the well-posedness of the one-dimensional martingale
problem characterising skew Brownian motion.

\begin{lem}\label{lem:skewB}
The one-dimensional martingale problem associated with the operator
$\frac{d^2}{dx^2}$ on $\R\setminus\{0\}$ and domain
\[
    \mathrm{Dom}(A)
    =\Bigl\{g\in C_0(\R)\cap C^2(\R\setminus\{0\}):
       g'(0^\pm)\ \text{exist and }
       \alpha\,g'(0^+)=(1-\alpha)\,g'(0^-)\Bigr\}
\]
is well posed, and its unique solution is skew Brownian motion with
parameter $\alpha\in[0,1]$.
\end{lem}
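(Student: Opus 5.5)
Lemma~\ref{lem:skewB} asks for two things: existence of a solution to the martingale problem, and uniqueness of its law. I would prove them separately, relying on the explicit kernel $g_{\fq}$ of Proposition~\ref{lem:1d-semigroup} with $\fq=1-2\alpha$. Note that the transmission condition $\alpha g'(0^+)=(1-\alpha)g'(0^-)$ becomes $(1-\fq)g'(0^+)=(1+\fq)g'(0^-)$, which is \eqref{eq:transmission-intro}.

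\emph{Existence.} Set $T_t g(a)=\int_{\R}g_{\fq}(a,b,t)g(b)\,db$. The one-dimensional versions of Proposition~\ref{Cruc-p} and Proposition~\ref{prop:semigroup}(c) give positivity, mass one and Chapman--Kolmogorov; the proofs are the same computations with the tangential factor removed. So $(T_t)$ is a conservative Feller semigroup on $C_0(\R)$, and it has a Feller process with continuous paths. Continuity of paths follows from the Gaussian upper bound via Kolmogorov's criterion, since $\E|X_t-X_s|^4\le C|t-s|^2$. It then remains to show that the generator $A$ of $(T_t)$ contains the displayed domain and acts there as $g''$ off the origin. Once this is done, Lemma~\ref{lem:Dynkin} shows that $g(X_t)-g(X_0)-\int_0^t g''(X_s)\,ds$ is a martingale. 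To identify the generator, take $g$ in the domain and compute $\partial_t T_tg(a)=\int \partial_b^2 g_{\fq}(a,b,t)\,g(b)\,db$, using that $g_{\fq}$ solves the heat equation in $b$ on each half-line. Integrating by parts twice on $(-\infty,0)$ and on $(0,\infty)$ produces boundary terms at $b=0^\pm$. The terms of the form $g(0)\,\partial_b g_{\fq}(a,0^\pm,t)$ cancel by the adjoint jump condition satisfied by $g_{\fq}$ in $b$: a direct computation from \eqref{g-q} gives $g_{\fq}(a,0^+,t)/(1-\fq)=g_{\fq}(a,0^-,t)/(1+\fq)$ together with continuity of the flux. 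The terms involving $g'(0^\pm)$ cancel precisely because of the condition $(1-\fq)g'(0^+)=(1+\fq)g'(0^-)$. This yields $\partial_tT_tg=T_t(g'')$, and letting $t\downarrow0$ gives $Ag=g''$.

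\emph{Uniqueness.} I would use the standard criterion (Ethier--Kurtz, Ch.~4, Thm.~4.1): if for every $\lambda>0$ the range of $\lambda-A$ restricted to $\mathrm{Dom}(A)$ is dense in $C_0(\R)$, then the one-dimensional marginals, and hence the whole law, are uniquely determined. Concretely, for $h\in C_c(\R)$ I solve $\lambda g-g''=h$ on each half-line, with $g\in C_0$, $g$ continuous at $0$ and the transmission condition imposed. The solution is $g=R_\lambda h+c_\pm e^{\mp\sqrt{\lambda}x}$, where $R_\lambda$ is the free resolvent and the two constants $c_\pm$ are fixed by two linear conditions. The determinant is proportional to $\sqrt\lambda\,[(1-\fq)+(1+\fq)]=2\sqrt\lambda\neq0$, so the system is uniquely solvable even at the endpoints $\fq=\pm1$ (equivalently $\alpha\in\{0,1\}$). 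This gives the range condition and thus uniqueness. Finally, since the SDE \eqref{eq:SDE-intro} (Harrison--Shepp) has the same generator, the unique solution is skew Brownian motion with parameter $\alpha$.

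\emph{Main obstacle.} I expect the hardest step to be the generator identification at the interface: matching the boundary terms from the two half-lines and verifying the adjoint jump relations of $g_{\fq}$ directly from \eqref{g-q}. The degenerate cases $\alpha\in\{0,1\}$, where the process is reflecting, also need a separate check that continuity at $0$ is still the correct gluing condition.
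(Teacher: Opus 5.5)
Your route is sound in outline and genuinely different from the paper, which gives no argument of its own and only refers to Revuz--Yor and Lejay. You realise the process from the explicit kernel $g_{\fq}$ with $\fq=1-2\alpha$, verify the generator by integrating by parts against the forward equation in $b$, and get uniqueness from the range condition through the explicit resolvent. The computations check out:
\begin{itemize}
\item $\partial_b g_{\fq}(a,0^+,t)=\partial_b g_{\fq}(a,0^-,t)$, and this alone cancels the $g(0)$ terms.
\item $g_{\fq}(a,0^\pm,t)=(1\mp\fq)\,p_t(a,0)$, combined with $(1-\fq)g'(0^+)=(1+\fq)g'(0^-)$, cancels the $g'(0^\pm)$ terms. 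Write the relation this way rather than dividing by $1\mp\fq$, which fails at $\fq=\pm1$.
\item The resolvent determinant is $2\sqrt\lambda$ in modulus, so the endpoints $\alpha\in\{0,1\}$ need no separate treatment.
\end{itemize}
Two steps, however, fail as written.

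\emph{The generator does not contain the displayed domain.} Your identity $T_tg-g=\int_0^tT_s(g'')\,ds$ gives $(T_tg-g)/t\to g''$ in $C_0(\R)$ only when $g''$ extends to an element of $C_0(\R)$, in particular only when $g''(0^+)=g''(0^-)$. The displayed domain imposes nothing on $g''$: it may jump at $0$, or be unbounded near $0$ or at infinity. Such $g$ are not in the generator's domain, so Lemma~\ref{lem:Dynkin} is unavailable. For unbounded $g''$ the term $\int_0^t g''(X_s)\,ds$ need not even be integrable. The repair has three parts:
\begin{itemize}
\item Restrict the class, e.g.\ to $g''$ bounded.
\item Derive the martingale property from the integrated identity and the Markov property, not from Dynkin.
\item Run uniqueness on the subclass $\mathcal D_0$ where $g''$ extends to $C_0(\R)$. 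There $A_0$ is a restriction of the generator of $(T_t)$. This supplies the dissipativity that Ethier--Kurtz require and that you never check. Your resolvent solutions $R_\lambda h+c\,e^{-\sqrt\lambda|x|}$ lie in $\mathcal D_0$, since $g''(0^\pm)=\lambda g(0)-h(0)$. Solutions for the larger class are solutions for $A_0$, so uniqueness transfers.
\end{itemize}

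\emph{The identification via \eqref{eq:SDE-intro} is incorrect.} Take $Y=Y_0+\sqrt2\,B+\gamma\hat L^0(Y)$ with $\hat L^0$ the symmetric local time. It\^o--Tanaka gives the local-time coefficient $\frac{1+\gamma}{2}g'(0^+)-\frac{1-\gamma}{2}g'(0^-)$. The condition $\alpha g'(0^+)=(1-\alpha)g'(0^-)$ therefore forces $\gamma=2\alpha-1=-\fq$, as in Remark~\ref{rem:sign-convention}, not the coefficient $+2\fq$ of \eqref{eq:SDE-intro}. However local time is normalised, a push $+2\fq L^0$ with $\fq>0$ favours $\{x>0\}$, whereas $\int_0^\infty g_{\fq}(0,b,t)\,db=\frac{1-\fq}{2}<\frac12$. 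The paper's Lemma~\ref{lem:localtime-cancellation} contains the same slip: $\fq(a+b)+\frac12(a-b)=0$ means $(1+2\fq)a=(1-2\fq)b$, not $(1-\fq)a=(1+\fq)b$.

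The fix makes the SDE unnecessary. Your existence step already shows that the unique solution is the Feller process with transition density $g_{1-2\alpha}(a,b,t)=p_t(a,b)+(2\alpha-1)\sgn(b)\,p_t(|a|,-|b|)$. This is the standard skew Brownian motion density with parameter $\alpha$. Alternatively, invoke Harrison--Shepp with coefficient $2\alpha-1$.
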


A standard reference for Lemma~\ref{lem:skewB} is
\cite[Chapter~VI, \S2]{RevuzYor}; see also \cite{Lejay2006} for a
survey.

The next lemma isolates the algebraic content that makes the interface
operator coincide with the Laplacian along trajectories.  We state it
in a self-contained form, so that it does not depend on the
representation of $X$ given later.

\begin{lem}
\label{lem:localtime-cancellation}
Let $\widetilde B$ be an $(N-1)$-dimensional Brownian motion and $B$ an
independent one-dimensional Brownian motion.  Define the semimartingale
$X_t=(\widetilde X_t,Y_t)$ on $\R^N$ by
\begin{equation}\label{eq:X-semimartingale}
    \widetilde X_t=\widetilde X_0+\sqrt 2\,\widetilde B_t,
    \qquad
    Y_t=Y_0+\sqrt 2\,B_t+2\fq\,L_t^0(Y),
\end{equation}
where $L^0(Y)$ is the symmetric local time of $Y$ at $0$.
Let $f\in C(\R^N)\cap C^2(\R^N\setminus S)$ admit one-sided normal
derivatives $\partial_{x_N}f(\tilde x,0^\pm)$ for every
$\tilde x\in\R^{N-1}$.  Then
\begin{equation}\label{eq:ito-tanaka-precise}
\begin{aligned}
f(X_t)=f(X_0)
&+\sqrt 2\!\int_0^t\!\nabla_{\tilde x}f(X_s)\cdot d\widetilde B_s
+\sqrt 2\!\int_0^t\!\partial_{x_N}f(X_s)\,dB_s\\
&+\int_0^t\!\Delta f(X_s)\,ds
+\int_0^t\!\Gamma_f(\widetilde X_s)\,dL_s^0(Y),
\end{aligned}
\end{equation}
with the \emph{exact local-time coefficient}
\begin{equation}\label{eq:Gammaf-def}
    \Gamma_f(\tilde x)
    :=\fq\bigl(\partial_{x_N}f(\tilde x,0^+)
                       +\partial_{x_N}f(\tilde x,0^-)\bigr)
    +\tfrac12\bigl(\partial_{x_N}f(\tilde x,0^+)
                       -\partial_{x_N}f(\tilde x,0^-)\bigr).
\end{equation}
Moreover, $\Gamma_f\equiv 0$ on $\R^{N-1}$ if and only if $f$ satisfies
the transmission condition
\begin{equation}\label{eq:Gammaf-zero-iff}
    (1-\fq)\,\partial_{x_N}f(\tilde x,0^+)
    =(1+\fq)\,\partial_{x_N}f(\tilde x,0^-),
    \qquad\tilde x\in\R^{N-1},
\end{equation}
equivalently, the skew transmission condition
$\alpha\,\partial_{x_N}f(\tilde x,0^+)
=(1-\alpha)\,\partial_{x_N}f(\tilde x,0^-)$
with $\alpha=(1-\fq)/2$.
\end{lem}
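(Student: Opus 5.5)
The plan is to apply the Itô--Tanaka (Meyer--Itô) formula in the normal variable, combined with the ordinary Itô formula in the tangential variables. I will first reduce to the case where $f$ is a difference of convex functions in $x_N$ near $\bbs$. The cleanest route is to split $f$ across the interface. Write $f(\tilde x,y)=f^+(\tilde x,y)\mathbf 1_{\{y>0\}}+f^-(\tilde x,y)\mathbf 1_{\{y\le 0\}}$, where $f^\pm$ are $C^2$ extensions of $f|_{\{\pm x_N\ge0\}}$ (after a localization and a standard mollification argument if only one-sided derivatives are available). Continuity of $f$ means $f^+(\tilde x,0)=f^-(\tilde x,0)$. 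Then
\[
f(\tilde x,y)=f^-(\tilde x,y)+\bigl(f^+-f^-\bigr)(\tilde x,y)\mathbf 1_{\{y>0\}}.
\]
The function $h:=f^+-f^-$ vanishes on $\bbs$, with $\partial_{x_N}h(\tilde x,0)=\partial_{x_N}f(\tilde x,0^+)-\partial_{x_N}f(\tilde x,0^-)$.

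Apply the multidimensional Itô formula to $f^-(X_t)$. Since $\widetilde X$ and $B$ are independent Brownian motions, the quadratic variations are $d\langle \widetilde X^i\rangle=2\,dt$, $d\langle Y\rangle=2\,dt$, and the cross variations vanish. The drift $2\fq\,dL^0_t(Y)$ enters through $\partial_{x_N}f^-(X_s)$. For the term $h\mathbf 1_{\{y>0\}}$, write $h(\tilde x,y)\mathbf 1_{\{y>0\}}=k(\tilde x,y)\,y^+$ with $k$ continuous and $k(\tilde x,0)=\partial_{x_N}h(\tilde x,0)$, or apply Itô--Tanaka directly to $y^+$. This uses $d Y^+_t=\mathbf 1_{\{Y_t>0\}}dY_t+\tfrac12\,dL_t^0(Y)$ up to the symmetric-versus-right local-time convention; with the symmetric local time one gets $\mathbf 1_{\{Y>0\}}+\tfrac12\mathbf 1_{\{Y=0\}}$ in the integrand, and the set $\{Y_s=0\}$ has zero Lebesgue measure. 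Then apply the product rule. Collecting the $ds$ terms gives $\Delta f(X_s)$, since $\{Y_s=0\}$ is Lebesgue-null and off $\bbs$ the two pieces reproduce $\Delta f$. The martingale terms recombine into $\sqrt2\nabla_{\tilde x}f\cdot d\widetilde B+\sqrt2\,\partial_{x_N}f\,dB$, with the same null-set remark.

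The main step, and the point where the sign conventions can go wrong, is collecting the $dL^0$ terms; I would handle it by tracking the symmetric local time carefully. The drift contributes $2\fq\,\partial_{x_N}f(\widetilde X_s,Y_s)\,dL^0_s$. Since $dL^0$ is carried by $\{Y_s=0\}$, this derivative must be interpreted on $\bbs$ as the symmetric average $\tfrac12(\partial_{x_N}f(\cdot,0^+)+\partial_{x_N}f(\cdot,0^-))$. That is exactly what the symmetric local time and the symmetric Itô--Tanaka convention give, and it produces $\fq(\partial^+ +\partial^-)$. The jump in the normal derivative contributes $\tfrac12\partial_{x_N}h(\widetilde X_s,0)\,dL^0_s=\tfrac12(\partial^+-\partial^-)\,dL^0_s$. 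Summing gives $\Gamma_f(\widetilde X_s)\,dL^0_s$, which is \eqref{eq:ito-tanaka-precise}. I expect the justification of the mixed dependence on $\tilde x$ to be the main obstacle: the tangential variable moves while local time accumulates. Since $\widetilde X$ is continuous and $k$ is continuous, the product rule for the continuous semimartingales $k(X_t)$ and $Y_t^+$ handles this. The cross-variation $d\langle k(X),Y^+\rangle$ is absolutely continuous in $t$, so it contributes nothing to the local-time part.

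Finally, $\Gamma_f\equiv0$ means $(\fq+\tfrac12)\partial^+=(\tfrac12-\fq)\partial^-$ at each $\tilde x$. As an algebraic identity this is $(1+2\fq)\partial^+=(1-2\fq)\partial^-$, and with $\alpha=(1-\fq)/2$ it matches the stated condition \eqref{eq:Gammaf-zero-iff} under the paper's normalization of $L^0$. I would check this normalization consistency (the factor $2\fq$ against the symmetric local time) explicitly, and adjust the convention for $L^0$ if needed, so that the two forms of the transmission condition agree. Each implication then follows by reading the linear relation pointwise in $\tilde x$.
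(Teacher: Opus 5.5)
\textbf{The gap is in your final step.} Your derivation of \eqref{eq:ito-tanaka-precise} follows the paper's logic: the drift $2\fq\,dL^0$ picks up the symmetric average of the normal derivative, and Tanaka's kink term picks up half its jump. Your splitting across $S$ also handles the moving tangential variable more honestly than the paper's frozen-$\tilde x$ use of It\^o--Tanaka.

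You correctly obtain $\Gamma_f\equiv0\iff(1+2\fq)\partial^+=(1-2\fq)\partial^-$, but then assert that this ``matches the stated condition under the paper's normalization of $L^0$''. It does not. For $\fq\neq0$ the relations $(1+2\fq)a=(1-2\fq)b$ and $(1-\fq)a=(1+\fq)b$ define different lines; they agree iff $(1-\fq)(1-2\fq)=(1+\fq)(1+2\fq)$, i.e.\ $\fq=0$. For instance, with $\fq=\tfrac14$, $a=1$, $b=3$ one has $\Gamma_f=0$ but $(1-\fq)a\neq(1+\fq)b$.

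No choice of local-time normalization can fix this. $\Gamma_f$ is given explicitly by \eqref{eq:Gammaf-def}, so the equivalence is pure algebra; renormalizing $L^0$ would only change which coefficient multiplies $dL^0$ in \eqref{eq:ito-tanaka-precise}. So the ``moreover'' clause is false as written. The paper's own proof makes exactly this slip: it passes from $\fq(a+b)+\tfrac12(a-b)=0$ to ``i.e.\ $(1-\fq)a=(1+\fq)b$''.

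What your computation actually proves is that $\Gamma_f\equiv0$ iff $\alpha\,\partial^+=(1-\alpha)\,\partial^-$ with $\alpha=\tfrac12+\fq$. This is consistent with Harrison--Shepp: $Z=Y/\sqrt2$ solves $Z_t=Z_0+B_t+2\fq\,\hat L^0_t(Z)$, a skew Brownian motion with parameter $\tfrac12+\fq$, and such a solution exists only for $|\fq|\le\tfrac12$. The condition $(1-\fq)\partial^+=(1+\fq)\partial^-$ is the one $G_\fq$ satisfies in its backward variable ($\alpha=(1-\fq)/2$). It arises instead from $Y_t=Y_0+\sqrt2B_t-\fq\,\hat L^0_t(Y)$, whose local-time coefficient is $\tfrac12\bigl[(1-\fq)\partial^+-(1+\fq)\partial^-\bigr]$. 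You should state and prove the corrected equivalence rather than claim agreement.

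\textbf{Two smaller points on the first part.} First, your splitting needs $f|_{\{\pm x_N\ge0\}}$ to be $C^2$ up to $S$. The stated hypotheses ($C^2$ off $S$ plus one-sided normal derivatives) do not give this, and without something of the sort $\int_0^t\Delta f(X_s)\,ds$ need not even be finite. The paper is silent here too, but the hypothesis should be strengthened rather than hidden behind ``mollification''.

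Second, $k=h/y$ is only $C^1$ when $h\in C^2$, so the product rule with $k(X_t)$ is not directly available. It is cleaner to note that $h\,\mathbf{1}_{\{y>0\}}=h(\tilde x,y^+)$, since $h(\tilde x,0)=0$. Then apply the ordinary It\^o formula to $h(\widetilde X_t,Y_t^+)$ with $dY^+=(\mathbf{1}_{\{Y>0\}}+\tfrac12\mathbf{1}_{\{Y=0\}})\,dY+\tfrac12\,d\hat L^0$. The term $\tfrac12\mathbf{1}_{\{Y=0\}}\,dY$ is not negligible, since $dY$ charges $\{Y=0\}$ through $2\fq\,d\hat L^0$. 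The local-time part is then $\tfrac12(1+2\fq)(\partial^+-\partial^-)$, which added to $2\fq\,\partial^-$ from $f^-$ gives exactly $\Gamma_f$.
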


\begin{proof}[Proof of Lemma~\ref{lem:localtime-cancellation}]
For each fixed $\tilde x\in\R^{N-1}$, set $g_{\tilde x}(y):=f(\tilde x,y)$.
Apply the generalised It\^o--Tanaka formula to the semimartingale $Y$
and the function $g_{\widetilde X_s}$ in the $y$-variable; this is
justified because $f\in C^2$ off $S$ and admits one-sided normal
derivatives at $S$ (see Revuz--Yor
\cite[Ch.~VI, \S 1--2]{RevuzYor}).

Three contributions arise.  First, the It\^o term: since the diffusion
coefficient of $Y$ is $\sqrt 2$, the quadratic variation satisfies
$d\langle Y\rangle_s=2\,ds$, hence
\[
    \tfrac12\,g_{\widetilde X_s}''(Y_s)\,d\langle Y\rangle_s
    =g_{\widetilde X_s}''(Y_s)\,ds,
\]
which contributes the $\partial_{x_Nx_N}f$ part of $\Delta f$.
Second, the drift term $2\fq\,dL_s^0(Y)$ contributes through the
\emph{symmetric} derivative at $0$ (because $dL_s^0(Y)$ charges only
times when $Y_s=0$):
\[
    \int_0^t\!g_{\widetilde X_s}'(Y_s)\,2\fq\,dL_s^0(Y)
    =\int_0^t\!\fq\bigl(g_{\widetilde X_s}'(0^+)
                                 +g_{\widetilde X_s}'(0^-)\bigr)
        dL_s^0(Y).
\]
Third, Tanaka's formula contributes the kink term
\[
    \tfrac12\!\int_0^t\!\bigl(g_{\widetilde X_s}'(0^+)
                              -g_{\widetilde X_s}'(0^-)\bigr)
        dL_s^0(Y).
\]
Adding the second and third contributions yields exactly the local-time
coefficient $\Gamma_f$ in \eqref{eq:Gammaf-def}.  The tangential
contributions $\nabla_{\tilde x}f$ and the Laplacian
$\Delta_{\tilde x}f$ inside $\Delta f$ come from the standard
$N$-dimensional It\^o formula applied jointly to $\widetilde X$.

Finally, $\Gamma_f(\tilde x)=0$ is equivalent to
\[
    \fq(a+b)+\tfrac12(a-b)=0,
    \qquad
    a:=\partial_{x_N}f(\tilde x,0^+),\
    b:=\partial_{x_N}f(\tilde x,0^-),
\]
i.e.\ $(1-\fq)a=(1+\fq)b$, which is
\eqref{eq:Gammaf-zero-iff}.
\end{proof}

% ---------------------------------------------------------------------
% Equivalence of weak / martingale / SDE formulations
% ---------------------------------------------------------------------

\begin{prop}
\label{prop:martingale-localtime}
Let $|\fq|\le 1$ and let $\{X_t\}_{t\ge 0}$ be the conservative
Feller process associated with the semigroup $\{\mathbf{T}(t)\}$ of
Proposition~\ref{prop:semigroup}.  Let $\varphi\in C_0(\R^N)$ and
$u(t,x):=\mathbf{T}(t)\varphi(x)$.  Then the following statements are
equivalent.

\begin{enumerate}
\item[\textup{(i)}] \emph{Weak formulation.}
$u$ satisfies $\partial_t u=\mathscr L u$ on $(0,\infty)\times\R^N$ in
the sense of Proposition~\ref{prop:semigroup}~(d), i.e.\
\eqref{eq:weak-formulation1} holds for every
$\psi\in C_c^\infty\bigl((0,\infty)\times\R^N\bigr)$.

\item[\textup{(ii)}] \emph{Martingale problem for $\mathscr L$.}
For every $f\in C_0(\R^N)\cap C^2(\R^N\setminus S)$ such that
$\partial_{x_N}f(\tilde x,0^\pm)$ exist and satisfy the skew
transmission condition
\begin{equation}\label{eq:martingale-transmission}
    \alpha\,\partial_{x_N}f(\tilde x,0^+)
    =(1-\alpha)\,\partial_{x_N}f(\tilde x,0^-),
    \qquad\alpha:=\frac{1-\fq}{2},
\end{equation}
the process
\[
    M_t^f:=f(X_t)-f(X_0)-\int_0^t\!\Delta f(X_s)\,ds
\]
is a martingale with respect to the natural filtration of $\{X_t\}$.

\item[\textup{(iii)}] \emph{Skew Brownian SDE.}
Writing $X_t=(\widetilde X_t,X_t^N)$, there exist an $(N-1)$-dimensional
Brownian motion $\widetilde B$ and an independent one-dimensional
Brownian motion $B$ such that
\begin{equation}\label{eq:local-time-SDE}
    \widetilde X_t=\widetilde X_0+\sqrt 2\,\widetilde B_t,
    \qquad
    X_t^N=X_0^N+\sqrt 2\,B_t+2\fq\,L_t^0(X^N),
\end{equation}
where $L_t^0(X^N)$ is the symmetric local time of $X^N$ at $0$.
In particular, $X^N$ is a one-dimensional skew Brownian motion with
skewness parameter $\alpha=(1-\fq)/2$.
\end{enumerate}
\end{prop}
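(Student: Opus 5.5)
The plan is to establish the cycle (i)$\Rightarrow$(ii)$\Rightarrow$(iii)$\Rightarrow$(i), using the Feller structure together with Lemmas~\ref{lem:Dynkin}, \ref{lem:skewB} and \ref{lem:localtime-cancellation}.

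\textbf{(i)$\Rightarrow$(ii).} By Proposition~\ref{prop:semigroup}~(d), statement (i) holds for every $\varphi$. It identifies the generator $A$ of $\{\mathbf{T}(t)\}$ on $C_0(\R^N)$ on smooth test functions. The aim is to show that every $f$ in the class of (ii) lies in $\mathrm{Dom}(A)$ with $Af=\Delta f$, where $\Delta f$ is the piecewise Laplacian off $\bbs$.

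To do this, I will compute $\frac{d}{dt}\mathbf{T}(t)f$ through the kernel. Integrating by parts separately in $y$ over the half-spaces $\{y_N>0\}$ and $\{y_N<0\}$ produces boundary terms on $\bbs$. These involve $f$ and the one-sided derivatives $\partial_{y_N}f(\tilde y,0^\pm)$, weighted by $G_{\fq}$ and $\partial_{y_N}G_{\fq}$. The explicit form \eqref{Fund-sol} shows that the kernel satisfies the adjoint transmission condition in $y$: $G_{\fq}(x,\cdot,t)$ jumps across $\bbs$ by the factor $(1-\fq)/(1+\fq)$, and its normal derivative is continuous. With this, the boundary terms reduce to $\bigl[(1-\fq)\partial_{y_N}f(0^+)-(1+\fq)\partial_{y_N}f(0^-)\bigr]$ times a common factor, and this bracket vanishes by \eqref{eq:martingale-transmission}. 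Hence
\[
\mathbf{T}(t)f-f=\int_0^t\mathbf{T}(s)\Delta f\,ds .
\]
Dividing by $t$ and using strong continuity (Proposition~\ref{prop:semigroup}~(c)) gives $f\in\mathrm{Dom}(A)$ and $Af=\Delta f$, assuming $\Delta f\in C_0$. Lemma~\ref{lem:Dynkin} then shows that $M^f$ is a martingale.

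\textbf{(ii)$\Rightarrow$(iii).} First take $f(x)=h(\tilde x)\chi(x_N)$ with $h\in C_c^\infty(\R^{N-1})$ and $\chi$ constant near $\bbs$. This class shows that $\widetilde X$ solves the martingale problem for $\Delta_{\tilde x}$, so $\widetilde X_t=\widetilde X_0+\sqrt2\,\widetilde B_t$. Next take $f=\chi\otimes g$ with $g$ in the domain of Lemma~\ref{lem:skewB}. This class shows that $X^N$ solves the skew martingale problem, so by Lemma~\ref{lem:skewB} it is skew Brownian motion with parameter $\alpha$. The standard SDE representation of skew Brownian motion (Harrison--Shepp; Revuz--Yor) then gives
\[
X_t^N=X_0^N+\sqrt2\,B_t+(2\alpha-1)\,\text{(local time term)},
\]
and one has to match the sign and normalization conventions so that this reads $2\fq L^0_t$. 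Independence of $\widetilde B$ and $B$ follows from products $h(\tilde x)g(x_N)$, which give zero cross-variation.

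\textbf{(iii)$\Rightarrow$(i).} Apply Lemma~\ref{lem:localtime-cancellation} to $f$ satisfying the transmission condition. Then $\Gamma_f=0$, so $M^f$ is a martingale. Taking expectations gives
\[
\mathbf{T}(t)f=f+\int_0^t\mathbf{T}(s)\Delta f\,ds .
\]
Duality against $\varphi$ (using the Chapman--Kolmogorov identity of Proposition~\ref{lem:semigroup}) and a density argument for test functions $\psi$ recovers the weak identity \eqref{eq:weak-formulation1}; the interface term appears from integrating by parts on each half-space. Alternatively, (i) holds unconditionally by Proposition~\ref{prop:semigroup}~(d), which closes the cycle trivially.

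\textbf{Main obstacle.} I expect the hardest step to be the boundary-term computation in (i)$\Rightarrow$(ii), together with getting the sign conventions consistent between $\fq$, $\alpha$ and the local time. Verifying the adjoint jump relation of $G_{\fq}$ directly from \eqref{Fund-sol} is where I would start. I would check the normalization against the case $\fq=0$ and against the $\pm$ orientation of $\sgn(y_N)$.
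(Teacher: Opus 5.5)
Your (i)$\Rightarrow$(ii) step is sound, and it is more solid than the paper's Step~1. The key facts check out: $G_{\fq}(x,(\tilde y,0^\pm),t)=(1\mp\fq)(4\pi t)^{-N/2}e^{-|x-\tilde y|^2/4t}$, and $\partial_{y_N}G_{\fq}$ is continuous across $\bbs$. So Green's formula on the two half-spaces gives $\mathbf{T}(t)f-f=\int_0^t\mathbf{T}(s)\Delta f\,ds$ under \eqref{eq:martingale-transmission}. The martingale property of $M^f$ then follows from the Markov property alone. You do not need $\Delta f\in C_0$, which fails whenever $\Delta f$ jumps across $\bbs$. This proves (ii) outright.

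The paper instead appeals to Lemma~\ref{lem:localtime-cancellation}. That lemma presupposes the SDE, and its final equivalence is misstated: $\fq(a+b)+\tfrac12(a-b)=0$ means $(1+2\fq)a=(1-2\fq)b$.

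The genuine gap is in (ii)$\Rightarrow$(iii), exactly where you defer to ``matching conventions'': it cannot be done. With $\alpha=(1-\fq)/2$, the decomposition of skew Brownian motion (the one recorded in Remark~\ref{rem:sign-convention}) is
$X^N_t=X^N_0+\sqrt2\,B_t+(2\alpha-1)L^0_t=X^N_0+\sqrt2\,B_t-\fq L^0_t$.
By uniqueness of the canonical semimartingale decomposition and $L^0\not\equiv0$, this is incompatible with the drift $+2\fq L^0_t$ in \eqref{eq:local-time-SDE} unless $\fq=0$. No rescaling of a nonnegative local time reverses the sign. Moreover, with the normalization of Lemma~\ref{lem:localtime-cancellation}, that equation has no solution at all for $|\fq|>\tfrac12$, by Harrison--Shepp. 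The paper's Step~2 simply asserts the representation, so it offers no way around this.

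The return step (iii)$\Rightarrow$(i) also fails, for three reasons.
\begin{enumerate}
\item Getting $\Gamma_f=0$ from \eqref{eq:martingale-transmission} uses the misstated equivalence above.
\item Pairing $\E_x[h(X_s)]=\mathbf{T}(s)h(x)$ with $\varphi(x)\,dx$ yields $\int h(y)\bigl(\int\varphi(x)G_{\fq}(x,y,s)\,dx\bigr)dy$. That is the adjoint flow, not $\int u(s,y)h(y)\,dy$, because $G_{\fq}$ is not symmetric (cf.\ Proposition~\ref{Cruc-p}(iii)). The paper's Step~3 makes the same identification.
\item Your fallback via Proposition~\ref{prop:semigroup}(d) does not rescue the cycle, because that identity is false for $u=\mathbf{T}(t)\varphi$ when $\fq\neq0$.
\end{enumerate}

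To see the last point, note that $u$ is continuous across $\bbs$ and Lipschitz in $x_N$ for $t>0$. Test \eqref{eq:weak-formulation1} with $\psi_\varepsilon=\eta(t)h(\tilde x)\,x_N\chi(x_N/\varepsilon)$, where $\chi$ is even and $\chi\equiv1$ near $0$, and let $\varepsilon\to0$. All bulk terms vanish; the $\partial^2_{x_N}$ term does so because $\partial^2_{x_N}[x_N\chi(x_N/\varepsilon)]$ is odd. What remains is $2\fq\int\!\!\int u(t,\tilde x,0)\,\eta h=0$. This fails whenever $u|_{\bbs}\not\equiv0$, e.g.\ for $0\le\varphi\not\equiv0$ and $|\fq|<1$.

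In fact, from \eqref{g-q}, $\partial_{x_N}u(t,\tilde x,0^\pm)$ are $(1\pm\fq)$ times a common value. The correct identity is $\int\!\!\int u(\partial_t\psi+\Delta\psi)=\int\!\!\int_{\bbs}\psi\,[\partial_{x_N}u]$.

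Since (ii) holds by your own computation while (i) fails for such $\varphi$, no argument can prove the equivalence as written. The interface term in (i) and the drift coefficient in (iii) must be corrected before a cyclic proof can close.
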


\begin{proof}[Proof of Proposition~\ref{prop:martingale-localtime}]
We prove the cyclic implication
\textup{(i)}$\Rightarrow$\textup{(ii)}$\Rightarrow$\textup{(iii)}$\Rightarrow$\textup{(i)}.

\medskip
\noindent\textbf{Step 1: \textup{(i)} $\Rightarrow$ \textup{(ii)}.}
Fix $f$ as in (ii) and let $\eta\in C_c^\infty(0,\infty)$.  Test
\eqref{eq:weak-formulation1} with $\psi(t,x)=\eta(t)f(x)$: since
$\partial_t\psi=\eta'(t)f$ and $\Delta\psi=\eta(t)\Delta f$, we obtain
\begin{equation}\label{eq:weak-tested}
\begin{split}
&\int_0^\infty\!\eta'(t)\!\int_{\R^N}\!u(t,x)f(x)\,dx\,dt
+\int_0^\infty\!\eta(t)\!\int_{\R^N}\!u(t,x)\Delta f(x)\,dx\,dt\\
&\qquad
+2\fq\!\int_0^\infty\!\eta(t)\!\int_S\!
        u(t,\tilde x,0)\,\partial_{x_N}f(\tilde x,0)\,d\sigma\,dt
=0.
\end{split}
\end{equation}
Because $f$ satisfies \eqref{eq:martingale-transmission},
Lemma~\ref{lem:localtime-cancellation} yields $\Gamma_f\equiv 0$, so $f$
belongs to the operator domain of the generator of $\{\mathbf{T}(t)\}$
and that generator acts on $f$ as $\Delta f$.  Equivalently,
\eqref{eq:weak-tested} translates into the distributional-in-$t$
identity
\begin{equation}\label{eq:semigroup-deriv}
    \frac{d}{dt}\,\mathbf{T}(t)f(x)=\mathbf{T}(t)\Delta f(x)
    \qquad\text{for a.e.\ }t>0.
\end{equation}
Integrating \eqref{eq:semigroup-deriv} from $0$ to $t$ produces the
Dynkin-type identity
\begin{equation}\label{eq:dynkin-semigroup}
    \mathbf{T}(t)f(x)-f(x)=\int_0^t\!\mathbf{T}(s)\Delta f(x)\,ds,
    \qquad t\ge 0.
\end{equation}
By Lemma~\ref{lem:Dynkin}, \eqref{eq:dynkin-semigroup} implies
$f\in\mathrm{Dom}(A)$ with $Af=\Delta f$, hence $M_t^f$ is a martingale.

\medskip
\noindent\textbf{Step 2: \textup{(ii)} $\Rightarrow$ \textup{(iii)}.}
Decompose $X_t=(\widetilde X_t,X_t^N)$ and identify each component.

\smallskip
\emph{Tangential component.}  Take $f(x)=\phi(\tilde x)$ with
$\phi\in C_0^2(\R^{N-1})$.  Then $\partial_{x_N}f\equiv 0$, so
\eqref{eq:martingale-transmission} is automatic, and
$\Delta f=\Delta_{\tilde x}\phi$.  Statement (ii) says that
\[
    \phi(\widetilde X_t)-\phi(\widetilde X_0)
    -\int_0^t\!\Delta_{\tilde x}\phi(\widetilde X_s)\,ds
    \quad\text{is a martingale,}
\]
i.e.\ $\widetilde X$ solves the martingale problem for the
$(N-1)$-dimensional Laplacian.  Hence
$\widetilde X_t=\widetilde X_0+\sqrt 2\,\widetilde B_t$ for some
$(N-1)$-dimensional Brownian motion $\widetilde B$.

\smallskip
\emph{Normal component.}  Take $f(x)=g(x_N)$ with
$g\in C_0(\R)\cap C^2(\R\setminus\{0\})$ satisfying
$\alpha g'(0^+)=(1-\alpha)g'(0^-)$ and $\alpha=(1-\fq)/2$.
Then $\Delta f=g''$ and (ii) becomes
\begin{equation}\label{eq:1D-mp}
    g(X_t^N)-g(X_0^N)-\int_0^t\!g''(X_s^N)\,ds
    \quad\text{is a martingale.}
\end{equation}
This is exactly the one-dimensional martingale problem of
Lemma~\ref{lem:skewB}.  By that lemma, $X^N$ is the skew Brownian motion
with parameter $\alpha$ and admits the local-time SDE representation
\eqref{eq:local-time-SDE} for some one-dimensional Brownian motion $B$.

\medskip
\noindent\textbf{Step 3: \textup{(iii)} $\Rightarrow$ \textup{(i)}.}
Assume \eqref{eq:local-time-SDE} and let
$\psi\in C_c^\infty\bigl((0,\infty)\times\R^N\bigr)$.  We apply the
time-dependent It\^o--Tanaka formula
(Lemma~\ref{lem:localtime-cancellation} with the time variable carried
through as a parameter) to $\psi(s,X_s)$.  Because
$\psi(s,\cdot)\in C^\infty(\R^N)$, the one-sided normal derivatives at
$S$ coincide and \eqref{eq:Gammaf-def} reduces to
\[
    \Gamma_{\psi(s,\cdot)}(\tilde x)
    =2\fq\,\partial_{x_N}\psi(s,\tilde x,0).
\]
Hence, for every $T>0$,
\begin{equation}\label{eq:psi-decomp}
\begin{aligned}
\psi(T,X_T)-\psi(0,X_0)
&=\int_0^T\!(\partial_s\psi+\Delta\psi)(s,X_s)\,ds
+2\fq\!\int_0^T\!\partial_{x_N}\psi(s,\widetilde X_s,0)\,
        dL_s^0(X^N)\\
&\quad+M_T,
\end{aligned}
\end{equation}
where $M$ is a martingale with $M_0=0$.  Choosing $T$ larger than the
temporal support of $\psi$, the boundary terms vanish:
$\psi(T,X_T)\equiv\psi(0,X_0)\equiv 0$.  Taking expectation in
\eqref{eq:psi-decomp} (the stochastic integrals are zero-mean
martingales),
\begin{equation}\label{eq:expectation-id}
    0=\E_x\!\int_0^\infty\!(\partial_s+\Delta)\psi(s,X_s)\,ds
    +2\fq\,\E_x\!\int_0^\infty\!
        \partial_{x_N}\psi(s,\widetilde X_s,0)\,dL_s^0(X^N).
\end{equation}
Multiply \eqref{eq:expectation-id} by $\varphi(x)$ and integrate in $x$.
The first term becomes, by Fubini and
$\E_x[h(X_s)]=\mathbf{T}(s)h(x)$,
\[
    \int_0^\infty\!\!\int_{\R^N}\!u(s,x)\,
        (\partial_s+\Delta)\psi(s,x)\,dx\,ds.
\]
The second term is handled by the standard occupation-time identity for
skew Brownian motion: for $h\in C_b\bigl((0,\infty)\times\R^{N-1}\bigr)$,
\[
    \int_{\R^N}\!\varphi(x)\,
        \E_x\!\int_0^\infty\!h(s,\widetilde X_s)\,dL_s^0(X^N)\,dx
    =\int_0^\infty\!\!\int_S\!u(s,\tilde x,0)\,h(s,\tilde x)\,
        d\sigma(\tilde x)\,ds,
\]
applied with $h(s,\tilde x)=\partial_{x_N}\psi(s,\tilde x,0)$.
Combining the two terms yields exactly
\eqref{eq:weak-formulation1}, which is (i).

\medskip
The cyclic chain is complete; (i), (ii), (iii) are equivalent.
\end{proof}

% ---------------------------------------------------------------------
% Sign convention remark
% ---------------------------------------------------------------------

\begin{rem}
\label{rem:sign-convention}\rm 
The parameter $\fq$ in $\mathscr L=\Delta+2\fq\,\delta_S\,\partial_{x_N}$
and the skewness parameter $\alpha\in[0,1]$ of one-dimensional skew
Brownian motion encode the same asymmetry at $S=\{x_N=0\}$, but with
opposite sign conventions depending on whether the equation is written
in forward or adjoint form.

In the probabilistic literature, skew Brownian motion $Z$ with
parameter $\alpha\in[0,1]$ is defined by
\[
    Z_t=Z_0+\sqrt 2\,B_t+(2\alpha-1)\,L_t^0(Z),
\]
or equivalently by the transmission condition
$\alpha g'(0^+)=(1-\alpha)g'(0^-)$.  In the present PDE formulation,
the kernel $G_{\fq}$ and the operator $\mathscr L$ lead to the
transmission condition
\[
    (1-\fq)\,\partial_{x_N}f(\tilde x,0^+)
    =(1+\fq)\,\partial_{x_N}f(\tilde x,0^-),
\]
which matches the skew Brownian condition under the identification
\[
    \alpha=\frac{1-\fq}{2},
    \qquad\text{equivalently}\qquad
    \fq=1-2\alpha.
\]
With this convention:
\begin{itemize}
\item $\fq=0$ corresponds to $\alpha=\tfrac12$, i.e.\ symmetric
Brownian motion;
\item $\fq>0$ corresponds to $\alpha<\tfrac12$, bias toward the
negative side of the interface;
\item $\fq<0$ corresponds to $\alpha>\tfrac12$, bias toward the
positive side.
\end{itemize}
\end{rem}

To implement the test-function method, we rely on the following auxiliary estimate.

\begin{lem}
\label{lapl-g}
Let $ \varphi \in C^\infty([0, \infty)) $ be nonnegative with compact support, i.e.
$ \varphi(\tau)=0 $ for all $ \tau \ge \tau_0 $ for some fixed $ \tau_0>0 $. For $ T>0 $ and
$ \theta>2 $, define
\begin{equation}
\label{g-T}
\varphi_{T}(x)=\left(\varphi\left(\frac{|x|^2}{T}\right)\right)^{\theta}.
\end{equation}
Then there exists a constant $ C>0 $, depending only on $ \varphi $, $ \theta $, and $ N $, such that
\begin{equation}
\label{Lap-g-est}
\big|\Delta \varphi_{T}(x)\big|
\le \frac{C}{T}\, \varphi\!\left(\frac{|x|^2}{T}\right)^{\theta-2}.
\end{equation}
\end{lem}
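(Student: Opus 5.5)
Write $r=|x|^2/T$ and $\phi=\varphi(r)$. The plan is a direct chain-rule computation followed by bounding the derivatives of $\varphi$ uniformly. First I would compute the derivatives of $r$:
\[
\nabla r=\frac{2x}{T},\qquad |\nabla r|^2=\frac{4|x|^2}{T^2}=\frac{4r}{T},\qquad \Delta r=\frac{2N}{T}.
\]
Next, since $\varphi_T=\phi^\theta$, the chain rule gives
\[
\Delta\varphi_T=\theta\phi^{\theta-1}\bigl(\varphi'(r)\Delta r+\varphi''(r)|\nabla r|^2\bigr)+\theta(\theta-1)\phi^{\theta-2}\varphi'(r)^2|\nabla r|^2 .
\]
Substituting the formulas for $r$,
\[
\Delta\varphi_T=\frac{\theta}{T}\Bigl[\phi^{\theta-1}\bigl(2N\varphi'(r)+4r\varphi''(r)\bigr)+4(\theta-1)r\,\phi^{\theta-2}\varphi'(r)^2\Bigr].
\]

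The bound then comes from the compact support of $\varphi$. Every term carries a factor of $\varphi'$ or $\varphi''$, so it vanishes unless $r\le\tau_0$. There $r\le\tau_0$, and $|\varphi'|,|\varphi''|\le M$ for some $M$ depending only on $\varphi$. Since $0\le\phi\le\|\varphi\|_\infty$, we have $\phi^{\theta-1}\le\|\varphi\|_\infty\,\phi^{\theta-2}$. Collecting terms gives
\[
|\Delta\varphi_T|\le\frac{\theta}{T}\Bigl[\|\varphi\|_\infty(2NM+4\tau_0M)+4(\theta-1)\tau_0M^2\Bigr]\phi^{\theta-2},
\]
which is \eqref{Lap-g-est}. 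The constant depends only on $\varphi,\theta,N$, since $\tau_0$ is fixed by $\varphi$.

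The only delicate point is regularity where $\phi=0$. The hypothesis $\theta>2$ makes $\phi^{\theta-2}$ and $\phi^{\theta-1}$ continuous, so $\varphi_T\in C^2$. On the open set $\{\phi>0\}$ the computation above is classical. On the set $\{\phi=0\}$, both $\varphi_T$ and its first and second derivatives vanish there, because they carry factors $\phi^{\theta-1}$ or $\phi^{\theta-2}$. The bound therefore holds everywhere. Smoothness of $r$ at $x=0$ poses no issue, since $|x|^2$ is a polynomial.
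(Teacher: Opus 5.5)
Your proof is correct and follows the paper's argument: your chain-rule computation gives exactly the paper's displayed formula for $\Delta\varphi_T$ (with $y=|x|^2/T$), and the estimate then follows from the boundedness of $\varphi,\varphi',\varphi''$ and from $y\le\tau_0$ wherever the derivative terms are nonzero. Your extra remark is a detail the paper leaves implicit: $\theta>2$ makes $\varphi_T$ of class $C^2$, so the formula holds also on the set where $\varphi(|x|^2/T)=0$.
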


\begin{proof}[Proof of Lemma~\ref{lapl-g}]
Let $ y = |x|^2/T $ and define $ v(r)=\varphi_{T}(x)=\varphi(y)^\theta $ with $ r=|x| $. Since $v$ is radial, a direct computation gives
\begin{equation}
\label{Lap-g-theta}
\Delta \varphi_{T}(x)
= \frac{1}{T}\Big(
2\theta N \varphi'(y)\varphi(y)
+4\theta y \varphi''(y)\varphi(y)
+4\theta(\theta-1) y \varphi'(y)^2
\Big)\, \varphi(y)^{\theta-2}.
\end{equation}

Since $\varphi$ has compact support, the quantities $\varphi$, $\varphi'$, and $\varphi''$ are bounded; moreover, 
$y \le \tau_0$ on the support of $\varphi$. Hence, all coefficients in \eqref{Lap-g-theta} are uniformly bounded, which yields
\eqref{Lap-g-est}.
\end{proof}

%==========================================================================
\section{Local and global well posedness}\label{sec:lgwp}
%==========================================================================
This section is devoted to the well-posedness analysis of the Cauchy problem~\eqref{Main-eq} in Lebesgue spaces. In particular, we establish the local and global theory in the relevant Lebesgue spaces framework, addressing existence, uniqueness, and continuous dependence of solutions on the initial data. We provide complete proofs of Theorem~\ref{thm:LWP}, which concerns local well-posedness, and Theorem~\ref{thm:global-small}, which yields global existence for sufficiently small initial data.
\subsection{Proof of Theorem~\ref{thm:LWP}}

Write $\mathbf{T}(t):=e^{t\mathscr{L}}$ for the linear semigroup defined by
\eqref{eq:semigroup}. The proof is based on three structural properties of $\mathbf{T}(t)$, all of which are established in Section~\ref{sec:prelim}:
\begin{enumerate}
\item[(P1)] \textit{Smoothing.} For every $1\le\beta\le\gamma\le\infty$,
\begin{equation}\label{P1}
\|\mathbf{T}(t)f\|_{L^\gamma}\le \kappa\, t^{-\mu(\beta,\gamma)}\|f\|_{L^\beta},
\qquad t>0,
\end{equation}
where $\mu(\beta,\gamma):=\tfrac{N}{2}(\beta^{-1}-\gamma^{-1})$ and
$\kappa=\kappa(N,\fq)$;
\item[(P2)] \textit{Positivity.} If $f\ge 0$ then $\mathbf{T}(t)f\ge 0$
for all $t\ge 0$;
\item[(P3)] \textit{Strong continuity on $L^\beta$}, $1\le\beta<\infty$.
\end{enumerate}
We emphasize that our argument follows the same strategy as in~\cite{BC}. For completeness and to keep the exposition self-contained, we include the full details here.

Let $\mathcal{N}(u):=|u|^{p-1}u$. Recall that a mild solution of \eqref{Main-eq} is a
function $u\in C([0,T];L^q)$ satisfying
\begin{equation}\label{Fixed}
u(t)=\mathbf{T}(t)u_0+\int_0^t\mathbf{T}(t-\tau)\,\mathcal{N}(u(\tau))\,d\tau,
\qquad t\in[0,T].
\end{equation}
Throughout the proof, we use the elementary bound
\begin{equation}\label{NL-bound}
|\mathcal{N}(\xi)-\mathcal{N}(\eta)|\le p\bigl(|\xi|^{p-1}+|\eta|^{p-1}\bigr)|\xi-\eta|,
\qquad\xi,\eta\in\R,
\end{equation}
which, via Hölder's inequality, yields for any exponent $\rho\ge 1$,
\begin{equation}\label{NL-Lp}
\|\mathcal{N}(u)-\mathcal{N}(v)\|_{L^\rho}
\le p\bigl(\|u\|_{L^{(p-1)\rho'}}^{\,p-1}+\|v\|_{L^{(p-1)\rho'}}^{\,p-1}\bigr)\|u-v\|_{L^{\rho''}},
\end{equation}
for an appropriate Hölder conjugate pair $(\rho',\rho'')$, i.e. $\frac{1}{\rho}=\frac{1}{\rho'}+\frac{1}{\rho''}$.

The proof is organized as follows. We first address the supercritical regime
\( q > \tfrac{N(p-1)}{2} \), \( q \ge 1 \), where the fixed-point argument is
implemented in a weighted space associated with the exponent \( pq \).
We then turn to the critical endpoint \( q = \tfrac{N(p-1)}{2} \), \( q > 1 \),
which requires the introduction of an auxiliary exponent
\( r^* \in (q, pq) \), together with a smallness condition on the linear flow.

In both cases, we establish the smoothing and continuous dependence estimates,
continuity at \( t = 0 \) in \( L^q \), uniqueness, and properties (i)--(iii)
of Theorem~\ref{thm:LWP}. The uniform-on-compacts statement follows in each regime.

% ==============================================================
\subsubsection{The supercritical regime: $q>N(p-1)/2$, $q\ge 1$}
% ==============================================================

Set
\begin{equation}\label{theta}
\theta:=\frac{N(p-1)}{2pq}\in\Bigl(0,\tfrac{1}{p}\Bigr).
\end{equation}
The supercriticality condition reads $\theta p<1$; note also $\theta<1$.
Given $R\ge\|u_0\|_{L^q}$ and $T>0$ (to be fixed), define the space
\begin{equation}\label{Y-sp}
\mathcal{Y}_T:=\Bigl\{u:(0,T)\to L^q\cap L^{pq}\text{ measurable}:\
\|u\|_T<\infty\Bigr\},
\end{equation}
equipped with the norm
\[
\|u\|_T:=\sup_{0<t<T}\|u(t)\|_{L^q}+\sup_{0<t<T}t^{\theta}\|u(t)\|_{L^{pq}}.
\]

We look for a fixed point of the mapping \(\mathcal{F}:\mathcal{Y}_T \to \mathcal{Y}_T\) defined by
\[
\mathcal{F}(u)(t) := \mathbf{T}(t)u_0 + \mathcal{I}(u)(t), 
\qquad
\mathcal{I}(u)(t) := \int_0^t \mathbf{T}(t-\tau)\mathcal{N}(u(\tau))\, d\tau,
\]
in the closed ball
\[
\mathcal{B}_T := \bigl\{ u \in \mathcal{Y}_T : \|u\|_T \le 2\kappa R \bigr\},
\qquad
d(u,v) := \sup_{0<t<T} t^{\theta} \|u(t)-v(t)\|_{L^{pq}}.
\]
Then \((\mathcal{B}_T,d)\) is a complete metric space.

We decompose
\[
\|\mathcal{F}(u)(t)\|_{L^q} \le \mathrm{I}_1(t) + \mathrm{I}_2(t),
\]
where \(\mathrm{I}_1(t) := \|\mathbf{T}(t)u_0\|_{L^q}\) and
\(\mathrm{I}_2(t) := \|\mathcal{I}(u)(t)\|_{L^q}\).
By (P1) with \(\beta=\gamma=q\), we have
\[
\mathrm{I}_1(t) \le \kappa \|u_0\|_{L^q} \le \kappa R.
\]
For \(\mathrm{I}_2(t)\), using \(\|\mathcal{N}(u(\tau))\|_{L^q} = \|u(\tau)\|_{L^{pq}}^p\) and again (P1),
\begin{align*}
\mathrm{I}_2(t)
&\le \kappa \int_0^t \|u(\tau)\|_{L^{pq}}^p \, d\tau
= \kappa \int_0^t \tau^{-\theta p} \bigl( \tau^{\theta} \|u(\tau)\|_{L^{pq}} \bigr)^p \, d\tau \\
&\le \kappa \|u\|_T^{\,p} \int_0^T \tau^{-\theta p} \, d\tau
= \frac{\kappa\, T^{1-\theta p}}{1-\theta p} (2\kappa R)^p,
\end{align*}
where we used \(\theta p < 1\).

For the weighted \(L^{pq}\)-norm, applying (P1) with \(\beta=q\), \(\gamma=pq\), we obtain
\begin{align*}
t^{\theta}\|\mathcal{I}(u)(t)\|_{L^{pq}}
&\le \kappa\, t^{\theta} \int_0^t (t-\tau)^{-\theta} \|u(\tau)\|_{L^{pq}}^p \, d\tau \\
&\le \kappa \|u\|_T^{\,p} \, t^{\theta} \int_0^t (t-\tau)^{-\theta} \tau^{-\theta p} \, d\tau \\
&= \kappa \|u\|_T^{\,p} \, \mathrm{B}(1-\theta,1-\theta p)\, T^{1-\theta p},
\end{align*}
where \(\mathrm{B}\) denotes the Euler beta function. Moreover, it follows from (P1) that
\[
t^{\theta} \|\mathbf{T}(t)u_0\|_{L^{pq}} \le \kappa R.
\]
Summing up, we obtain
\begin{equation}\label{A-self}
\|\mathcal{F}(u)\|_T
\le 2\kappa R + \kappa \Lambda\, T^{1-\theta p} (2\kappa R)^p,
\end{equation}
where
\[
\Lambda := \frac{1}{1-\theta p} + \mathrm{B}(1-\theta,1-\theta p).
\]
Choosing
\begin{equation}\label{T-super}
T^{1-\theta p} \le \frac{1}{2\Lambda \kappa (2\kappa R)^{p-1}},
\end{equation}
ensures that \(\|\mathcal{F}(u)\|_T \le 2\kappa R\), so that \(\mathcal{F}\) maps \(\mathcal{B}_T\) into itself. Note that \eqref{T-super} determines \(T\) solely in terms of \(R = \|u_0\|_{L^q}\) and universal constants.

We next prove that \(\mathcal{F}\) is a contraction. Using \eqref{NL-Lp} with \(\rho=q\), we have
\[
\|\mathcal{N}(u)-\mathcal{N}(v)\|_{L^q}
\le p \bigl( \|u\|_{L^{pq}}^{p-1} + \|v\|_{L^{pq}}^{p-1} \bigr) \|u-v\|_{L^{pq}}.
\]
Hence, for \(u,v \in \mathcal{B}_T\),
\begin{align*}
t^{\theta} \|\mathcal{I}(u)(t) - \mathcal{I}(v)(t)\|_{L^{pq}}
&\le p\kappa\, t^{\theta} \int_0^t (t-\tau)^{-\theta} \tau^{-\theta p} (2\kappa R)^{p-1}
\bigl( \tau^{\theta} \|u-v\|_{L^{pq}} \bigr)\, d\tau \\
&\le p\kappa (2\kappa R)^{p-1} \mathrm{B}(1-\theta,1-\theta p)\, T^{1-\theta p}\, d(u,v).
\end{align*}
By possibly reducing \(T\) further so that
\[
p\kappa (2\kappa R)^{p-1} \mathrm{B}(1-\theta,1-\theta p)\, T^{1-\theta p} \le \tfrac{1}{2},
\]
it follows that \(\mathcal{F}\) is a \(\tfrac{1}{2}\)-contraction on \((\mathcal{B}_T,d)\).
Banach’s fixed-point theorem then yields a unique fixed point \(u \in \mathcal{B}_T\).

Concerning continuity in \(L^q\), observe that since \(\|u\|_T < \infty\) and \(\theta p < 1\),
\[
\|\mathcal{N}(u)\|_{L^1((0,T);L^q)}
\le \|u\|_T^{\,p} \int_0^T \tau^{-\theta p}\, d\tau < \infty.
\]
It is a standard result that if \(g \in L^1((0,T);\mathbf{X})\), then
\[
t \mapsto \int_0^t \mathbf{T}(t-\tau) g(\tau)\, d\tau \in C([0,T];\mathbf{X}),
\]
for any strongly continuous semigroup \((\mathbf{T}(t))_{t\ge 0}\). Combined with (P3), this yields
\(u \in C([0,T];L^q)\) with \(u(0) = u_0\).

We now establish uniqueness. Let \(u,v \in C([0,T];L^q)\) be two solutions of \eqref{Fixed} with the same initial data. By continuity, there exists \(\tau > 0\) such that
\[
\sup_{0 \le t \le \tau} \bigl( \|u(t)\|_{L^q} + \|v(t)\|_{L^q} \bigr) < \infty.
\]
Moreover, applying the linear perturbation argument below on \([0,\tau]\), we obtain that both \(t^{\theta}\|u(t)\|_{L^{pq}}\) and \(t^{\theta}\|v(t)\|_{L^{pq}}\) remain bounded for sufficiently small \(\tau\). Consequently, \(u,v \in \mathcal{B}_\tau\), and the contraction property implies \(u=v\) on \([0,\tau]\).

A standard continuation argument based on the open-closed principle then extends this identity to the whole interval \([0,T]\).\\

Finally, we address smoothing and continuous dependence in the supercritical regime. Let \(u_0,v_0 \in L^q\) and denote by \(u,v\) the corresponding solutions on \([0,T]\), where \(T=\min\{T(u_0),T(v_0)\}\). Set \(w:=u-v\) and \(w_0:=u_0-v_0\). Then, by \eqref{Fixed}, \(w\) satisfies the linear integral equation
\begin{equation}\label{w-eq}
w(t)=\mathbf{T}(t)w_0+\int_0^t \mathbf{T}(t-\tau)\,\mathfrak{a}(\tau,\cdot)\,w(\tau)\,d\tau,
\end{equation}
where the linearization coefficient is defined by
\begin{equation}\label{a-def}
\mathfrak{a}(t,x):=
\begin{cases}
\dfrac{\mathcal{N}(u(t,x))-\mathcal{N}(v(t,x))}{u(t,x)-v(t,x)}, & u(t,x)\neq v(t,x),\\[2mm]
p|u(t,x)|^{p-1}, & u(t,x)=v(t,x).
\end{cases}
\end{equation}

By \eqref{NL-bound}, we have
\[
|\mathfrak{a}(t,x)| \le p\bigl(|u(t,x)|^{p-1}+|v(t,x)|^{p-1}\bigr),
\]
and consequently,
\begin{equation}\label{a-Lsigma}
\|\mathfrak{a}(t,\cdot)\|_{L^{\varsigma}}
\le p\bigl(\|u(t)\|_{L^{pq}}^{p-1}+\|v(t)\|_{L^{pq}}^{p-1}\bigr)
\le C_0\, t^{-\theta(p-1)},
\end{equation}
where \(\varsigma:=\frac{pq}{p-1}\), and \(C_0\) depends only on \(R\).

The analysis then relies on a linear regularity result analogous to~\cite[Theorem A1, p.~297]{BC}, whose proof follows the same strategy and uses only assumptions (P1)--(P3).
\begin{lem}\label{lem:linear}
Let $T>0$, $\sigma>1$, $\sigma>\tfrac{N}{2}$, and
$\mathfrak{b}\in L^\infty((0,T);L^\sigma)$. For every
$\phi\in L^r(\R^N)$, $1\le r<\infty$, the integral equation
$w(t)=\mathbf{T}(t)\phi+\displaystyle\int_0^t\mathbf{T}(t-\tau)\mathfrak{b}(\tau)w(\tau)d\tau$
admits a unique solution in $C([0,T];L^r)\cap L^\infty_{\mathrm{loc}}((0,T);L^\infty)$
satisfying
\begin{equation}\label{eq:lin-est}
\|w(t)\|_{L^\infty}\le\kappa_1\exp\!\bigl(\kappa_1 t\|\mathfrak{b}\|_{L^\infty((0,T);L^\sigma)}^\beta\bigr)
t^{-N/(2r)}\|\phi\|_{L^r},\quad t\in(0,T],
\end{equation}
where $\beta:=\tfrac{2\sigma}{2\sigma-N}$.
\end{lem}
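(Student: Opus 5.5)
The plan is to follow the strategy of \cite[Theorem A1]{BC}: build the solution by a contraction on short time intervals in a weighted space, iterate over finitely many intervals to reach $T$, and track the constants so as to recover the exponential factor in \eqref{eq:lin-est}. Set $M:=\|\mathfrak{b}\|_{L^\infty((0,T);L^\sigma)}$. Only (P1)--(P3) and H\"older's inequality will be used.

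\textbf{Step 1: the weighted space.} Fix $\tau_0\in(0,T]$. On $(0,\tau_0)$ consider the space $\mathcal{Z}_{\tau_0}$ of measurable $w:(0,\tau_0)\to L^r\cap L^\infty$ with norm
\[
\|w\|_{\mathcal Z}:=\sup_{0<t<\tau_0}\|w(t)\|_{L^r}+\sup_{0<t<\tau_0}t^{\frac{N}{2r}}\|w(t)\|_{L^\infty}.
\]
Let $\Phi(w)(t):=\mathbf{T}(t)\phi+\int_0^t\mathbf{T}(t-s)\mathfrak{b}(s)w(s)\,ds$.

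\textbf{Step 2: the $L^r$ component.} By H\"older, $\|\mathfrak{b}w\|_{L^{\rho}}\le M\|w\|_{L^r}^{1-\lambda}\|w\|_{L^\infty}^{\lambda}$ for a suitable $\rho$. The cleanest choice is to put $\mathfrak{b}w$ in $L^{\rho}$ with $1/\rho=1/\sigma+1/r$ and use $\|w\|_{L^r}$ directly. When $\rho\ge1$, (P1) gives $\|\mathbf{T}(t-s)(\mathfrak{b}w)\|_{L^r}\le\kappa(t-s)^{-N/(2\sigma)}M\|w(s)\|_{L^r}$. When $\rho<1$ (for instance $r=1$), I would instead use $\|\mathfrak{b}w\|_{L^r}\le M\|w\|_{L^{\tilde r}}$ with $1/\tilde r=1/r-1/\sigma$, and interpolate $\|w\|_{L^{\tilde r}}$ between $L^r$ and $L^\infty$. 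This costs the weight $s^{-N/(2\sigma)}$, which is integrable because $\sigma>N/2$.

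\textbf{Step 3: the $L^\infty$ component.} Apply (P1) from $L^{\rho}$ to $L^\infty$ with $1/\rho=1/\sigma+1/r$, or split the product via H\"older as $\mathfrak{b}\in L^\sigma$ times $w\in L^{\sigma'}$. This gives
\[
t^{\frac{N}{2r}}\Bigl\|\int_0^t\mathbf{T}(t-s)\mathfrak{b}w\,ds\Bigr\|_{L^\infty}\le\kappa M\,t^{\frac N{2r}}\int_0^t(t-s)^{-\frac N{2\sigma}}s^{-\frac N{2r}}\,ds\,\|w\|_{\mathcal Z}\le C\kappa M\,\tau_0^{1-\frac N{2\sigma}}\|w\|_{\mathcal Z}.
\]
This is a Beta integral. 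It converges because $N/(2\sigma)<1$ and, when interpolating, the exponent of $s$ stays below $1$ (the exponent $N/(2r)$ itself may exceed $1$, so I would interpolate as $w\in L^{\sigma'}$ between $L^r$ and $L^\infty$ to keep the $s$-exponent $<1$).

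\textbf{Step 4: contraction and iteration.} Since $\Phi$ is affine, the linear part has norm at most $C\kappa M\tau_0^{1-N/(2\sigma)}$. Choosing $\tau_0$ with $C\kappa M\tau_0^{1-N/(2\sigma)}=1/2$, that is $\tau_0\sim M^{-\beta}$ with $\beta=2\sigma/(2\sigma-N)$, makes $\Phi$ a $\tfrac12$-contraction. Hence there is a unique fixed point with $\|w\|_{\mathcal Z}\le2\kappa\|\phi\|_{L^r}$. Continuity in $C([0,\tau_0];L^r)$ follows from (P3) and the Bochner-integral argument already used for $\mathcal I$. Next, restart at $t=\tau_0/2,\tau_0,\dots$ using the semigroup property (Step 2 of Proposition~\ref{prop:semigroup}). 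Each step starts from $L^\infty\cap L^r$ data, so without the singular weight the bound on each block is multiplied by a factor of at most $2\kappa$. The number of blocks is about $1+t/\tau_0\sim1+tM^\beta$, so the total growth is $(2\kappa)^{1+CtM^\beta}\le\kappa_1\exp(\kappa_1tM^\beta)$. This is \eqref{eq:lin-est}, with the weight $t^{-N/(2r)}$ inherited from the first block: for $t\ge\tau_0$, use $\|w(t)\|_{L^\infty}\le C\|w(\tau_0/2)\|_{L^\infty}$ and $\tau_0^{-N/(2r)}\lesssim t^{-N/(2r)}e^{ctM^\beta}$. Uniqueness in the stated class follows by applying the contraction locally near $0$ and then propagating forward.

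\textbf{Main obstacle.} The delicate part is the exponent bookkeeping. One must choose the intermediate Lebesgue exponent so that both singularities, at $s=0$ and at $s=t$, are integrable for all $1\le r<\infty$, including $r=1$ and $r$ large, using only $\sigma>\max(1,N/2)$. If a single choice fails, I would handle small and large $r$ separately, as in \cite{BC}.
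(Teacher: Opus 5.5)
You have the right overall architecture. A contraction on a block of length $\tau_0\sim M^{-\beta}$, in a space weighted by $t^{N/(2r)}$ in $L^\infty$, followed by restarts and a count of roughly $1+tM^\beta$ blocks, does produce the factor $\kappa_1\exp(\kappa_1 tM^\beta)\,t^{-N/(2r)}$. The paper gives no proof of its own, only the reference to Brezis--Cazenave. The gap is in Step~3, which is the real content of the lemma.

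Your displayed Beta integral $\int_0^t(t-s)^{-N/(2\sigma)}s^{-N/(2r)}\,ds$ diverges at $s=0$ whenever $r\le N/2$. The remedy you propose also fails. Taking $\mathfrak b\in L^\sigma$ and $w\in L^{\sigma'}$ puts $\mathfrak b w$ in $L^1$, and (P1) from $L^1$ to $L^\infty$ costs $(t-s)^{-N/2}$, which is not integrable at $s=t$ for $N\ge 2$. In fact no single intermediate exponent can save an unsplit integral. Suppose $w(s)\in L^\gamma$ with weight $s^{-\frac N2(\frac1r-\frac1\gamma)}$ and $\frac1\rho=\frac1\sigma+\frac1\gamma$. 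The singular exponents at $s=t$ and $s=0$ are then $\frac N2(\frac1\sigma+\frac1\gamma)$ and $\frac N2(\frac1r-\frac1\gamma)$. Their sum $\frac N2(\frac1\sigma+\frac1r)$ exceeds $2$ for $r=1$, $N\ge4$, so both cannot be $<1$. The ``main obstacle'' you flag is therefore left unresolved, and the fallback ``handle small and large $r$ separately'' does not say how.

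The missing idea is to split the Duhamel integral at $s=t/2$.

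On $(t/2,t)$, use (P1) from $L^\sigma$ to $L^\infty$ together with $\|\mathfrak b(s)w(s)\|_{L^\sigma}\le M\|w(s)\|_{L^\infty}\le M(t/2)^{-N/(2r)}\|w\|_{\mathcal Z}$. Only the integrable factor $(t-s)^{-N/(2\sigma)}$ appears.

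On $(0,t/2)$ you have $t-s\ge t/2$, so the smoothing exponent may be as large as you like. Take $\gamma=\max(r,\sigma')$, which is finite since $\sigma>1$, and set $\frac1\rho=\frac1\sigma+\frac1\gamma\le1$. Bound $(t-s)^{-N/(2\rho)}\le(t/2)^{-N/(2\rho)}$. The remaining weight $s^{-\frac N2(\frac1r-\frac1\gamma)}$ has exponent at most $\frac N{2\sigma}<1$.

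After multiplying by $t^{N/(2r)}$, both pieces are bounded by $CM\,t^{1-N/(2\sigma)}\|w\|_{\mathcal Z}$, and your Step~4 then goes through.

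Two smaller repairs are also needed:
\begin{enumerate}
\item In Step~2, your second option requires $r\le\sigma$. The range $\sigma<r<\sigma'$, which is nonempty when $\sigma<2$ (e.g.\ $N=2$), is therefore not covered. The same choice $\gamma=\max(r,\sigma')$, with (P1) from $L^\rho$ to $L^r$, handles all $r$ at once; the two exponents sum to $\frac N{2\sigma}$.
\item Uniqueness cannot come from ``applying the contraction near $0$''. An element of $C([0,T];L^r)\cap L^\infty_{\mathrm{loc}}((0,T);L^\infty)$ need not have finite weighted norm $\sup_t t^{N/(2r)}\|w(t)\|_{L^\infty}$. Instead, restart the difference $z$ of two solutions at $t_0>0$ from $z(t_0)\in L^r\cap L^\infty$. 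Identify it on $[t_0,T)$ with the constructed solution via an $L^\infty$ singular Gronwall argument. Then use $\|z(t)\|_{L^\infty}\le K(t-t_0)^{-N/(2r)}\|z(t_0)\|_{L^r}$ and let $t_0\downarrow0$.
\end{enumerate}
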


We distinguish two sub-cases according to whether $q$ controls the
auxiliary exponent directly or whether an intermediate step is needed.\\

\noindent\emph{\underline{Sub-case $q\ge p-1$.}} Let $\sigma^*:=\frac{q}{p-1}$. Then, $\sigma^*\ge 1$ and $\sigma^*>\frac{N}{2}$. Applying Lemma~\ref{lem:linear} with
$\phi=w_0$, $r=q$, $\sigma=\sigma^*$, and using the bound
$\|\mathfrak{a}\|_{L^\infty((0,T);L^{\sigma^*})}\le C$ (which holds
because $u, v \in \mathcal{B}_T$ and $(p-1)\sigma^*=q$), we obtain
\begin{equation}\label{w-Linfty}
t^{N/(2q)}\|w(t)\|_{L^\infty}\le C\|w_0\|_{L^q},
\end{equation}
and the $L^\infty$ estimate part of~\eqref{eq:cont-depend} follows.
For the $L^q$-component of \eqref{eq:cont-depend}, observe first that~\eqref{a-Lsigma} with H\"older inequality yield
\begin{equation}
    \label{a-interp-est}
\|\mathfrak{a}w\|_{L^q}\le\|\mathfrak{a}\|_{L^{\varsigma}}\|w\|_{L^{pq}}\leq C_0 t^{-\theta(p-1)}.
\end{equation}
Now, we use the $L^q \to L^{pq}$ smoothing effect to get
\begin{equation}
    \label{w-pq-est}
\|w(t)\|_{L^{pq}}\le\kappa\,t^{-\theta}\|w_0\|_{L^q}+C\int_0^t\, (t-\tau)^{-\theta} \,\tau^{-\theta(p-1)}\,\|w(\tau)\|_{L^{pq}}\,d\tau.
\end{equation}

A singular Gronwall lemma (see~\cite[p.~288]{BC})\footnote{The singular Gronwall lemma in~\cite{BC} was stated without proof, with a reference to Ref.~[6] therein. However, to the best of our knowledge, that reference is not publicly accessible. A complete proof can be found in~\cite{Hala}. A related result appears in~\cite[Theorem~3.8, p.~700]{JW2}, while an alternative proof in a special case is given in~\cite[Lemma~2.1.11, p.~16]{Dlotko-Book}.} gives the following estimate 
\begin{equation}
    \label{w-pq-est-1}
    \sup_{0<t<T}\,\left(t^{\theta}\|w(t)\|_{L^{pq}}\right)\,\leq\, C\|w_0\|_{L^q}.
\end{equation}

On the other hand, using \eqref{w-eq} together with the \(L^q\)-to-\(L^q\) smoothing effect and \eqref{a-interp-est}, we obtain
\begin{equation}
\label{eq:w-q-est}
\begin{split}
\|w(t)\|_{L^{q}}
&\le \|w_0\|_{L^q}
+ C \int_0^t \tau^{-\theta(p-1)} \,\|w(\tau)\|_{L^{pq}}\, d\tau \\
&\le \|w_0\|_{L^q}
+ C \sup_{0<t<T}\bigl(t^{\theta}\|w(t)\|_{L^{pq}}\bigr),
\end{split}
\end{equation}
where we have used that \(p\theta<1\). Combining \eqref{w-pq-est-1} and \eqref{eq:w-q-est} yields the \(L^q\)-component of \eqref{eq:cont-depend}.\\

\noindent\emph{\underline{Sub-case $1\le q<p-1$.}}  Now observe that \(\sigma^*<1\), so Lemma~\ref{lem:linear} cannot be applied directly with \(r=q\).
To overcome this difficulty, we proceed by a time-splitting argument. We first establish the \(L^q\)-estimate as above, and then use it as an input for the linear lemma on a shifted time interval.

Indeed, for \(t\in(0,T)\), we obtain
\begin{equation}\label{half-time}
\|w(t/2)\|_{L^{pq}} \le C\, t^{-\theta}\|w_0\|_{L^q},
\end{equation}
by applying (P1) to \eqref{w-eq} over the interval \([0,t/2]\) and closing the estimate via Gronwall’s inequality.

We then apply Lemma~\ref{lem:linear} on the shifted interval \((t/2,t)\) with exponent \(r=pq\) and
\[
\sigma=\frac{pq}{p-1}>\frac{Np}{2}>\frac{N}{2}\geq 1,
\]
which holds under the supercritical assumption \(q>\frac{N(p-1)}{2}\). In particular, \(\sigma>1\), and the bound \eqref{a-Lsigma} implies
\[
\|\mathfrak{a}\|_{L^\infty((t/2,t);L^\sigma)} \le C\, t^{-\theta(p-1)}.
\]
Hence,
\[
\|w(t)\|_{L^\infty}
\le \kappa_1 \exp\!\bigl(\kappa_1 t (t^{-\theta(p-1)})^\beta\bigr)
\bigl(1+t^{-N/(2pq)}\bigr)\,\|w(t/2)\|_{L^{pq}},
\]
where \(\beta=\frac{2\sigma}{2\sigma-N}\). Since \(1-\theta(p-1)\beta>0\), the exponential term remains uniformly bounded on \((0,T]\).

Combining this with \eqref{half-time} and using the identity
\[
\frac{N}{2pq}+\theta=\frac{N}{2q},
\]
we conclude that
\[
t^{N/(2q)}\|w(t)\|_{L^\infty} \le C\|w_0\|_{L^q},
\]
which yields \eqref{eq:cont-depend}.

To prove Property (ii), we first consider the case \(u_0 \in L^\infty \cap L^q\). The Duhamel formula \eqref{Fixed}, together with the estimate
\(\|\mathbf{T}(t)u_0\|_{L^\infty} \le (1+|\fq|)\|u_0\|_{L^\infty}\)
(cf.\ \eqref{eq:gauss-compare}) and a standard Gronwall argument, yields
\(u \in L^\infty((0,T)\times \mathbb{R}^N)\). Consequently,
\[
t^{N/(2q)} \|u(t)\|_{L^\infty} \to 0 \quad \text{as } t \downarrow 0.
\]

For a general data \(u_0 \in L^q\), let \(\varepsilon>0\) and decompose
\(u_0 = \widetilde u_0 + h\), where \(\widetilde u_0 \in L^\infty \cap L^q\)
and \(\|h\|_{L^q} < \varepsilon\). Let \(\widetilde u\) denote the solution
associated with \(\widetilde u_0\). Then, by \eqref{eq:cont-depend},
\[
t^{N/(2q)} \|u(t)\|_{L^\infty}
\le t^{N/(2q)} \|u(t)-\widetilde u(t)\|_{L^\infty}
+ t^{N/(2q)} \|\widetilde u(t)\|_{L^\infty}
\le C(R)\varepsilon + o(1),
\]
as \(t \downarrow 0\). Letting first \(t \downarrow 0\) and then
\(\varepsilon \downarrow 0\) proves (ii).

To establish positivity, assume \(u_0 \ge 0\), first in the case \(u_0 \in L^\infty\).
Define the Picard iterates
\(u_{(0)} := \mathbf{T}(\cdot)u_0\),
\(u_{(k+1)} := \mathcal{F}(u_{(k)})\).
We claim that \(u_{(k)} \ge 0\) for all \(k\). Indeed, by (P2), \(u_{(0)} \ge 0\),
and if \(u_{(k)} \ge 0\), then \(\mathcal{N}(u_{(k)}) = u_{(k)}^p \ge 0\),
so (P2) preserves nonnegativity through the Duhamel term. Hence each
\(u_{(k)} \ge 0\). Since \(u_{(k)} \to u\) in \(\mathcal{B}_T\) by Banach’s fixed-point theorem,
we may extract a subsequence converging almost everywhere, which implies \(u \ge 0\).

For general \(u_0 \in L^q\), \(u_0 \ge 0\), define
\(u_0^{(n)} := \min\{u_0,n\} \in L^\infty \cap L^q\), so that
\(u_0^{(n)} \ge 0\) and \(u_0^{(n)} \to u_0\) in \(L^q\).
Let \(u^{(n)}\) be the corresponding solutions; then \(u^{(n)} \ge 0\),
and by \eqref{eq:cont-depend}, \(u^{(n)} \to u\) in \(C([0,T];L^q)\).
Passing to the limit yields \(u \ge 0\).

% ==============================================================
\subsubsection{The critical regime: $q=N(p-1)/2$, $q>1$}
% ==============================================================

The supercritical construction breaks at criticality because $\theta p=1$
in \eqref{theta}, which renders the integral $\int_0^T\tau^{-\theta p}d\tau$
divergent. We therefore replace the exponent $pq$ by a carefully chosen
auxiliary exponent $r^*\in(q,pq)$ and exploit an initial-layer smallness
of the linear flow rather than a global bound.

\paragraph{Choice of auxiliary exponent.}
Fix once and for all
\begin{equation}\label{r-star}
r^*\in(q,pq)\ \text{with}\ r^*\ge p,\qquad
\vartheta:=\frac{N}{2}\Bigl(\frac{1}{q}-\frac{1}{r^*}\Bigr),\qquad
a^*:=\frac{N}{2}\Bigl(\frac{p}{r^*}-\frac{1}{q}\Bigr).
\end{equation}
Since $r^*<pq$, $\vartheta<1/p$. The critical hypothesis
$q=N(p-1)/2$ is equivalent to each of
\begin{equation}\label{B-id}
a^*+p\vartheta=1,\qquad p\vartheta+\frac{N(p-1)}{2r^*}=\vartheta+1.
\end{equation}

\paragraph{Smallness of the linear flow in the initial layer.}
The following lemma is the interface analogue of a standard fact for the
heat semigroup \cite[Lemma~8]{BC}; its proof uses only (P1) and the
density of $L^\infty\cap L^q$ in $L^q$.

\begin{lem}\label{lem:vanish-lin-prof}
For every compact $\mathcal{K}\subset L^q$, there exists
$\omega_{\mathcal{K}}:(0,1]\to(0,\infty)$ with
$\omega_{\mathcal{K}}(t)\downarrow 0$ as $t\downarrow 0$ such that
\[
t^\vartheta\|\mathbf{T}(t)\phi\|_{L^{r^*}}\le\omega_{\mathcal{K}}(t),
\qquad\phi\in\mathcal{K},\ t\in(0,1].
\]
In particular, for a single $\phi\in L^q$,
$t^\vartheta\|\mathbf{T}(t)\phi\|_{L^{r^*}}\to 0$ as $t\downarrow 0$.
\end{lem}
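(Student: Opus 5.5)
The plan is to first treat a single $\phi$ by density, and then get uniformity over $\mathcal{K}$ from total boundedness.

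\emph{Step 1: a uniform bound.} Applying (P1) with $\beta=q$ and $\gamma=r^*$ gives, for every $\psi\in L^q$ and $t>0$,
\[
t^{\vartheta}\|\mathbf{T}(t)\psi\|_{L^{r^*}}\le\kappa\|\psi\|_{L^q}.
\]
So the map $\psi\mapsto t^\vartheta\mathbf{T}(t)\psi$ is bounded from $L^q$ to $L^{r^*}$, uniformly in $t>0$.

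\emph{Step 2: decay for nice data.} Take $\psi\in L^\infty\cap L^q$. By (P1) with $\beta=\gamma=q$ and $\beta=\gamma=\infty$, together with interpolation (or (P1) from $L^{r^*}$ to $L^{r^*}$, since $\psi\in L^{r^*}$), we get $\|\mathbf{T}(t)\psi\|_{L^{r^*}}\le\kappa\|\psi\|_{L^q}^{q/r^*}\|\psi\|_{L^\infty}^{1-q/r^*}$. Hence
\[
t^\vartheta\|\mathbf{T}(t)\psi\|_{L^{r^*}}\le C(\psi)\,t^\vartheta\to0,
\]
because $\vartheta>0$ (this uses $r^*>q$).

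\emph{Step 3: a single $\phi\in L^q$.} Given $\varepsilon>0$, split $\phi=\psi+h$ with $\psi\in L^\infty\cap L^q$ and $\|h\|_{L^q}<\varepsilon$, for instance by truncation. Steps 1 and 2 then give
\[
\limsup_{t\downarrow0}t^\vartheta\|\mathbf{T}(t)\phi\|_{L^{r^*}}\le\kappa\varepsilon .
\]

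\emph{Step 4: compact families.} Define
\[
\omega_{\mathcal K}(t):=\sup_{0<s\le t}\,\sup_{\phi\in\mathcal K}s^\vartheta\|\mathbf{T}(s)\phi\|_{L^{r^*}} .
\]
It is finite by Step 1 (at most $\kappa\sup_{\mathcal K}\|\phi\|_{L^q}$), it is nondecreasing, and it dominates the quantity in the statement. It remains to show $\omega_{\mathcal K}(t)\to0$ as $t\downarrow 0$. Given $\varepsilon>0$, cover $\mathcal K$ by finitely many $L^q$-balls $B(\phi_j,\varepsilon)$, $j=1,\dots,m$, using compactness. For $\phi\in B(\phi_j,\varepsilon)$, Step 1 gives
\[
s^\vartheta\|\mathbf{T}(s)\phi\|_{L^{r^*}}\le\kappa\varepsilon+s^\vartheta\|\mathbf{T}(s)\phi_j\|_{L^{r^*}},
\]
and by Step 3 the last term is below $\varepsilon$ for $s\le t_\varepsilon$, uniformly over the finitely many $j$. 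So $\omega_{\mathcal K}(t)\le(\kappa+1)\varepsilon$ for $t\le t_\varepsilon$.

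If strict positivity of $\omega_{\mathcal K}$ is needed, replace it by $\omega_{\mathcal K}(t)+t$. The single-$\phi$ statement is the case $\mathcal K=\{\phi\}$.

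The main subtlety is Step 4: the uniform (in $t$) boundedness from Step 1 must be combined with the finite $\varepsilon$-net so that the modulus does not depend on the particular element of $\mathcal K$. Everything uses only (P1), and the non-convolution structure of $G_{\fq}$ plays no role.
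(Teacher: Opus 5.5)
Your proof is correct and follows the paper's argument essentially step for step: the uniform bound $t^\vartheta\|\mathbf{T}(t)\psi\|_{L^{r^*}}\le\kappa\|\psi\|_{L^q}$ from (P1), $O(t^\vartheta)$ decay for data in $L^\infty\cap L^q$, a density split for a single $\phi$, and a finite $\varepsilon$-net over $\mathcal K$ to make the modulus uniform. Your Step 2 (interpolating between the $L^q$ and $L^\infty$ bounds, or using $L^{r^*}$-boundedness) is in fact a cleaner justification than the paper's. The paper instead invokes (P1) with $\beta=\infty>\gamma=r^*$, which (P1) does not allow, together with a support measure that may be infinite.
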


\begin{proof}[Proof of Lemma~\ref{lem:vanish-lin-prof}]
For $\phi\in L^\infty\cap L^q$, (P1) with $\beta=\infty$, $\gamma=r^*$
gives $\|\mathbf{T}(t)\phi\|_{L^{r^*}}\le\kappa|\{|\phi|>0\}|^{1/r^*}\|\phi\|_{L^\infty}\le\kappa\|\phi\|_{L^{r^*}}$,
which is finite, so $t^\vartheta\|\mathbf{T}(t)\phi\|_{L^{r^*}}\le C t^\vartheta\to 0$.
For general $\phi\in L^q$ and $\epsilon>0$, decompose
$\phi=\phi_\epsilon+\eta_\epsilon$ with $\phi_\epsilon\in L^\infty\cap L^q$
and $\|\eta_\epsilon\|_{L^q}<\epsilon$. Using (P1) with $\beta=q,\gamma=r^*$
(so $\mu=\vartheta$):
\[
t^\vartheta\|\mathbf{T}(t)\phi\|_{L^{r^*}}\le t^\vartheta\|\mathbf{T}(t)\phi_\epsilon\|_{L^{r^*}}+\kappa\epsilon.
\]
Taking $\limsup_{t\downarrow 0}$ bounds the result by $\kappa\epsilon$;
arbitrariness of $\epsilon$ yields the single-element claim. For a compact
$\mathcal{K}$, cover $\mathcal{K}$ by finitely many balls $B(\phi_j,\epsilon)$,
$j=1,\ldots,J$, in $L^q$; each $\phi_j$ may be replaced by an
$L^\infty\cap L^q$ approximant within another $\epsilon$. The triangle
inequality then gives
\[
\sup_{\phi\in\mathcal{K}}t^\vartheta\|\mathbf{T}(t)\phi\|_{L^{r^*}}
\le\max_{1\le j\le J}t^\vartheta\|\mathbf{T}(t)\phi_j\|_{L^{r^*}}+C\epsilon,
\]
and the max tends to $0$ as $t\downarrow 0$.
\end{proof}

For $T>0$ and $\delta>0$, introduce
\begin{align*}
\widehat{\mathcal{Y}}_T&:=\Bigl\{u\in L^\infty((0,T);L^q):\
t^\vartheta u\in L^\infty((0,T);L^{r^*})\Bigr\},\\
\mathcal{Y}_T^*&:=\Bigl\{u\in\widehat{\mathcal{Y}}_T:\
t\mapsto t^\vartheta u(t)\in C_0([0,T];L^{r^*})\Bigr\},
\end{align*}
where $C_0$ denotes continuous functions vanishing at $t=0$. Put
\begin{align*}
\mathcal{W}_T(R,\delta)&:=\Bigl\{u\in\widehat{\mathcal{Y}}_T:
\|u(t)\|_{L^q}\le R+1,\ t^\vartheta\|u(t)\|_{L^{r^*}}\le\delta\Bigr\},\\
\mathcal{W}_T^*(R,\delta)&:=\mathcal{W}_T(R,\delta)\cap\mathcal{Y}_T^*,
\end{align*}
with $d(u,v):=\sup_{0<t<T}t^\vartheta\|u(t)-v(t)\|_{L^{r^*}}$. Both are
complete metric spaces.

\paragraph{Self-mapping estimate.}
Set $R:=\|u_0\|_{L^q}$ and let $u\in\mathcal{W}_T(R,\delta)$. Using (P1)
with $\beta=r^*/p$, $\gamma=q$ (possible since $r^*<pq$ forces $r^*/p<q$),
the smoothing exponent is $a^*$, so
\begin{align}
\|\mathcal{I}(u)(t)\|_{L^q}
&\le\kappa\int_0^t(t-\tau)^{-a^*}\|u(\tau)\|_{L^{r^*}}^p\,d\tau\notag\\
&\le\kappa\delta^p\int_0^t(t-\tau)^{-a^*}\tau^{-p\vartheta}\,d\tau
=\kappa\delta^p\mathrm{B}(1-a^*,1-p\vartheta),\label{B-Lq}
\end{align}
using \eqref{B-id} ($a^*+p\vartheta=1$), so the beta integral is finite
independently of $t\le T$. Similarly, applying (P1) with $\beta=r^*/p$,
$\gamma=r^*$, the smoothing exponent is $N(p-1)/(2r^*)$, and
\begin{align}
t^\vartheta\|\mathcal{I}(u)(t)\|_{L^{r^*}}
&\le\kappa\,t^\vartheta\int_0^t(t-\tau)^{-N(p-1)/(2r^*)}\|u(\tau)\|_{L^{r^*}}^p d\tau\notag\\
&\le\kappa\delta^p\mathrm{B}\bigl(1-\tfrac{N(p-1)}{2r^*},1-p\vartheta\bigr),\label{B-Lr}
\end{align}
using the second identity in \eqref{B-id}. Call the resulting constant
$\kappa_*=\kappa_*(N,p,q,r^*,\fq)$.

\paragraph{Fixing $\delta$, then $T$.}
Choose $\delta>0$ so small that
\begin{equation}\label{delta-B}
\kappa_*\delta^p\le 1,\qquad\kappa_*\delta^{p-1}\le\tfrac{1}{3},
\end{equation}
where the second condition will control the contraction. 

Then, applying Lemma~\ref{lem:vanish-lin-prof} to the singleton $\{u_0\}$, we choose
$T=T(u_0)>0$ such that
\begin{equation}\label{T-B}
\sup_{0<t<T} t^\vartheta \|\mathbf{T}(t)u_0\|_{L^{r^*}} \le \frac{\delta}{2}.
\end{equation}

Combining \eqref{B-Lq}--\eqref{B-Lr} with \eqref{delta-B}, \eqref{T-B}, and the
$L^q$-contractivity estimate $\|\mathbf{T}(t)u_0\|_{L^q}\le \kappa \|u_0\|_{L^q}\le \kappa R$
from (P1), we deduce
\begin{align*}
\|\mathcal{F}(u)(t)\|_{L^q} &\le \kappa R + 1 \le R+1,\\
t^\vartheta \|\mathcal{F}(u)(t)\|_{L^{r^*}} &\le \frac{\delta}{2} + \frac{\delta}{3} < \delta.
\end{align*}
Here we have rescaled, if necessary, so that $\kappa \le 1$ without loss of generality.
Hence,
\[
\mathcal{F}:\mathcal{W}_T(R,\delta)\to \mathcal{W}_T(R,\delta).
\]

On the other hand, using \eqref{NL-Lp} with $\rho=r^*/p$ (or directly \eqref{NL-bound}), we have
\[
\|\mathcal{N}(u)-\mathcal{N}(v)\|_{L^{r^*/p}}
\le C\bigl(\|u\|_{L^{r^*}}^{p-1}+\|v\|_{L^{r^*}}^{p-1}\bigr)\|u-v\|_{L^{r^*}}.
\]
The same argument leading to \eqref{B-Lr} then yields, for $u,v\in\mathcal{W}_T(R,\delta)$,
\[
d(\mathcal{F}(u),\mathcal{F}(v)) \le C \kappa_* \delta^{p-1} d(u,v)
\le \frac{1}{3} d(u,v),
\]
by \eqref{delta-B}. Banach’s fixed-point theorem therefore provides a unique solution
$u\in \mathcal{W}_T(R,\delta)$.

We next prove continuity at $t=0$ in the weighted $L^{r^*}$ norm. Note that the fixed point
constructed above a priori lies in $\mathcal{W}_T$, but not necessarily in $\mathcal{W}_T^*$.
To upgrade this, we show that $\mathcal{F}(\mathcal{W}_T^*) \subset \mathcal{W}_T^*$.
Using the density of $\mathcal{W}_T^*\cap \{u\in C([0,T];L^\infty)\}$ in $\mathcal{W}_T^*$
(approximating $u_0$ in $L^q$ by bounded functions), and the continuity of the nonlinear term
in $L^{r^*}$ for bounded functions, the problem reduces to the bounded case, where dominated
convergence implies
\[
t^\vartheta \mathcal{I}(u)(t) \to 0 \quad \text{in } L^{r^*}.
\]
By Lemma~\ref{lem:vanish-lin-prof}, the linear term also satisfies
$t^\vartheta \mathbf{T}(t)u_0 \to 0$ in $L^{r^*}$. Hence
$\mathcal{F}(u)\in \mathcal{W}_T^*$ whenever $u\in \mathcal{W}_T^*$.
Applying Banach’s theorem on the closed subset $\mathcal{W}_T^*$ shows that the fixed point
indeed belongs to $\mathcal{W}_T^*$.

Since $u\in \mathcal{W}_T^*$ and $r^*\ge p$, the coefficient
$\mathfrak{a}:=p|u|^{p-1}$ (or its symmetric form from \eqref{a-def} when comparing two solutions)
satisfies
\[
\mathfrak{a} \in L^\infty_{\mathrm{loc}}\bigl((0,T);L^{\tau^*}\bigr),
\qquad
\tau^*:=\frac{r^*}{p-1}>1,
\]
with $\tau^*>\frac{N}{2}$ (since $r^*>\frac{N(p-1)}{2}=q(p-1)$). Applying Lemma~\ref{lem:linear}
on $[\varepsilon,T-\varepsilon]$ for every $\varepsilon\in(0,T/2)$ yields
\[
u \in L^\infty_{\mathrm{loc}}((0,T);L^\infty).
\]

Finally, we prove continuity at $t=0$ in $L^q$. Repeating the estimate \eqref{B-Lq} with $u_0$ replaced by $0$, we obtain
\[
\|u(t)-\mathbf{T}(t)u_0\|_{L^q}
= \|\mathcal{I}(u)(t)\|_{L^q}
\le \kappa_* \Bigl(\sup_{0<\tau<t} \tau^\vartheta \|u(\tau)\|_{L^{r^*}}\Bigr)^p
\xrightarrow[t\downarrow 0]{} 0,
\]
since $u\in \mathcal{W}_T^*$ implies
$\displaystyle\sup_{0<\tau<t} \tau^\vartheta \|u(\tau)\|_{L^{r^*}} \to 0$ as $t\downarrow 0$.
Together with (P3), this yields $u\in C([0,T];L^q)$ and $u(0)=u_0$.

We now turn to uniqueness in the critical regime. Let \(v \in C([0,T];L^q)\) be a mild solution with \(v(0)=u_0\), and set
\[
\mathfrak{K} := v([0,T]) \subset L^q,
\qquad
M^* := \sup_{t\in[0,T]} \|v(t)\|_{L^q}.
\]
By continuity, \(\mathfrak{K}\) is compact in \(L^q\). Applying Lemma~\ref{lem:vanish-lin-prof} to \(\mathfrak{K}\), together with the previously fixed choice of \(\delta\) and \(T\), we obtain a time \(T_1>0\) such that for every \(\phi \in \mathfrak{K}\), the solution \(U(\cdot)\phi\) exists on \([0,T_1]\) and satisfies
\[
\|U(t)\phi\|_{L^q} \le M^*+1,
\qquad
t^\vartheta \|U(t)\phi\|_{L^{r^*}} \le \delta,
\quad t\in[0,T_1].
\]

Next, for any \(0<s<T\), the interior regularity established above implies \(v(s)\in L^\infty\). Using the \(L^\infty\)-uniqueness theory for the corresponding linearized problem (via Lemma~\ref{lem:linear} and a Picard iteration argument as in~\cite[Lemma~9]{BC}, adapted to our framework), we deduce that
\[
v(s+t) = U(t)v(s)
\quad \text{for } t \in [0,\min\{T-s,T_1\}].
\]
Combining these estimates and letting \(s \downarrow 0\), we obtain
\[
\|v(t)\|_{L^q} \le M^*+1,
\qquad
t^\vartheta \|v(t)\|_{L^{r^*}} \le \delta,
\qquad t \in (0,T'),
\]
where \(T' := \min\{T,T_1\}\). Hence \(v \in \mathcal{W}_{T'}(M^*,\delta)\).

By uniqueness of the fixed point in \(\mathcal{W}_{T'}(M^*,\delta)\), we conclude that \(v=u\) on \([0,T']\). The identity is then extended beyond \(T'\) by a standard continuation argument.

To establish Properties (i)--(iii) in the critical case, we note that the proof of smoothing and continuous dependence in the supercritical regime carries over verbatim upon replacing \(pq\) by \(r^*\). The key modification is the use of the identity (cf.\ \eqref{B-id})
\[
p\vartheta + \frac{N(p-1)}{2r^*} = \vartheta + 1,
\]
 (cf.\ \eqref{B-id}) replaces the corresponding relation in the supercritical setting.

A noteworthy feature of the critical case is that the constant \(C\) appearing in \eqref{eq:cont-depend} is independent of \(\|u_0\|_{L^q}\); it depends only on the fixed parameter \(\delta\). Properties (ii) and (iii) then follow by the same density and approximation arguments as in the supercritical case.

% ==============================================================
\subsubsection{Uniform time on compact sets}
% ==============================================================

Let \(\mathcal{K} \subset L^q\) be a compact set.

In the supercritical case, the existence time \(T\) in \eqref{T-super} depends only on
\(R = \|u_0\|_{L^q}\). Defining
\[
R_* := 1 + \sup_{u_0 \in \mathcal{K}} \|u_0\|_{L^q} < \infty,
\]
we set \(T(\mathcal{K}) := T(R_*)\), which provides a uniform lifespan for all initial data in \(\mathcal{K}\).

In the critical case, the time \(T = T(u_0)\) is determined by \eqref{T-B}. By the compact version of Lemma~\ref{lem:vanish-lin-prof}, this estimate holds uniformly for \(u_0 \in \mathcal{K}\). Consequently, there exists \(T(\mathcal{K}) > 0\) such that \eqref{T-B} is satisfied uniformly for all \(u_0 \in \mathcal{K}\), and the fixed-point construction in the critical regime yields a solution on \([0,T(\mathcal{K})]\) for every \(u_0 \in \mathcal{K}\).

This completes the proof of Theorem~\ref{thm:LWP}.

\subsection{Proof of Theorem~\ref{thm:global-small}}
We now turn to the global existence theory for small initial data in the supercritical case
\[
p>p_F=1+\frac{2}{N}.
\]
To this end, we first record the following elementary auxiliary result.

\begin{lem}
\label{lem:r-choice}
Let $N\geq 2$ and $p>p_F=1+\frac{2}{N}$. Then there exists $r\in(1,\infty)$ such that
\begin{equation}
\label{eq:r}
\frac{1}{q_c}-\frac{2}{Np}< \frac{1}{r}<\min\left(\frac{1}{q_c}\;\;,\;\;\frac{1}{p}\right),
\end{equation}
where $q_c$ is defined in~\eqref{eq:q-c}. Define
\begin{equation}
\label{eq:mu}
\mu=\frac{N}{2}\left(\frac{1}{q_c}-\frac{1}{r}\right).
\end{equation}
Then the following properties hold:
\begin{equation}
\label{eq:mu-1}
0< \mu < \frac{1}{p},
\end{equation}
and
\begin{equation}
\label{eq:mu-2}
1-\frac{q_c}{r}-p\mu=-\mu.
\end{equation}
\end{lem}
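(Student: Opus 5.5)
The plan is to verify the claims by elementary manipulation of the exponents. The only place where the hypothesis $p>p_F$ enters is the non-emptiness of the interval in \eqref{eq:r}.

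\emph{Step 1 (existence of $r$).} Here $q_c=\frac{N(p-1)}{2}$, so $\frac{1}{q_c}=\frac{2}{N(p-1)}$. We need the open interval
\[
\Bigl(\frac{1}{q_c}-\frac{2}{Np},\ \min\Bigl(\frac{1}{q_c},\frac1p\Bigr)\Bigr)
\]
to be nonempty and to meet $(0,1)$, so that $r\in(1,\infty)$.

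First, $\frac{1}{q_c}-\frac{2}{Np}<\frac{1}{q_c}$ is trivial. Second,
\[
\frac{1}{q_c}-\frac{2}{Np}=\frac{2}{N}\Bigl(\frac{1}{p-1}-\frac1p\Bigr)=\frac{2}{Np(p-1)},
\]
and $\frac{2}{Np(p-1)}<\frac1p$ is equivalent to $p-1>\frac{2}{N}$, that is, $p>p_F$. This is the one substantive point.

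The lower endpoint is positive. The upper endpoint is at most $\frac1p<1$. Hence any $\frac1r$ in the interval lies in $(0,1)$, and $r\in(1,\infty)$.

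\emph{Step 2 (bounds on $\mu$).} From $\frac1r<\frac{1}{q_c}$ we get $\mu>0$. From $\frac1r>\frac1{q_c}-\frac{2}{Np}$ we get
\[
\mu=\frac N2\Bigl(\frac1{q_c}-\frac1r\Bigr)<\frac N2\cdot\frac{2}{Np}=\frac1p.
\]
This gives \eqref{eq:mu-1}.

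\emph{Step 3 (identity \eqref{eq:mu-2}).} Write $\mu=\frac{N}{2q_c}-\frac{N}{2r}$. Since $\frac{N}{2q_c}=\frac{1}{p-1}$, we have $\mu=\frac{1}{p-1}-\frac{N}{2r}$. The identity $1-\frac{q_c}{r}-p\mu=-\mu$ is equivalent to $1-\frac{q_c}{r}=(p-1)\mu$. Now compute
\[
(p-1)\mu=1-\frac{N(p-1)}{2r}=1-\frac{q_c}{r},
\]
which is exactly what is needed.

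There is no real obstacle. The only care needed is in Step 1: recognising that $\frac1{q_c}-\frac2{Np}=\frac{2}{Np(p-1)}$, and that comparing it with $\frac1p$ is exactly the Fujita condition. One should also note that $q_c$ may be less than $1$ for $p$ near $p_F$ in low dimensions, but this causes no trouble, since only the listed strict inequalities are used.
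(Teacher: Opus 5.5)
Your proof is correct and follows the same route as the paper's much terser proof: the key inequality $\frac{1}{q_c}-\frac{2}{Np}=\frac{2}{Np(p-1)}<\frac1p$, which is equivalent to $p>p_F$, followed by direct bounds and algebra for $\mu$. One correction to your closing aside, which does not affect the argument: $p>p_F$ is equivalent to $q_c=\frac{N(p-1)}{2}>1$ (your own Step 1 shows $\frac{1}{q_c}<1$ exactly when $p>p_F$), so $q_c<1$ never occurs under the hypothesis.
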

\begin{proof}[Proof of Lemma~\ref{lem:r-choice}]
From the assumption $p>1+\frac{2}{N}$, we immediately obtain
$$
 0 <\frac{1}{q_c}-\frac{2}{Np}<\frac{1}{p}.
  $$
Furthermore, relation~\eqref{eq:r} directly yields~\eqref{eq:mu-1}. Finally, the identity~\eqref{eq:mu-2} follows straightforwardly from~\eqref{eq:mu}.
\end{proof}

For the parameters $r$ and $\mu$ determined by~\eqref{eq:r} and \eqref{eq:mu}, we define the Banach space
\[
\bx:=\Bigl\{ u\in L^\infty\!\bigl((0,\infty);L^{r}(\mathbb R^N)\bigr)\;:\;
\|u\|_{\bx}:=\sup_{t>0}\left(t^\mu\|u(t)\|_{L^{r}}\right)<\infty \Bigr\}.
\]

We look for a mild solution to~\eqref{Duhamel} in the closed ball 
\(\overline{\bb}(0,2C\varepsilon)\subset\bx\), 
where \(\varepsilon>0\) is chosen sufficiently small. As in the proof of Theorem~\ref{thm:LWP}, we introduce the operator
\[
\mathcal{F}(u)(t):=\mathbf{T}(t)u_0+\mathcal{I}(u)(t),
\]
where
\[
\mathcal{I}(u)(t):=\int_0^t \mathbf{T}(t-\tau)\left(|u(\tau)|^{p-1}u(\tau)\right)\,d\tau
:=\int_0^t \mathbf{T}(t-\tau)\mathcal{N}(u(\tau))\,d\tau.
\]

Since $r>q_c$ by~\eqref{eq:r}, we may apply~\eqref{eq:Lq-Lr-est} to obtain
\[
\|\mathbf{T}(t)u_0\|_{L^{r}}
\le C\,t^{-\frac{N}{2}\left(\frac1{q_c}-\frac1r\right)}
\|u_0\|_{L^{q_c}}
= C\,t^{-\mu}\|u_0\|_{L^{q_c}}.
\]
Consequently,
\begin{equation}\label{eq:linear-bound}
\sup_{t>0} t^\mu \|\mathbf{T}(t)u_0\|_{L^{r}}
\le C\,\|u_0\|_{L^{q_c}}.
\end{equation}

We next estimate the nonlinear term $\mathcal{I}(u)$. Applying the smoothing estimate \eqref{eq:Lq-Lr-est} with the pair $(r,\frac{r}{p})$, we get
\begin{equation*}\label{nl-bound1}
\|\mathcal{I}(u)(t)\|_{L^r}
\le C\int_0^t (t-s)^{-\delta}\|u(s)\|_{L^r}^p\,ds,
\end{equation*}
where
\[
\delta=\frac{q_c}{r}.
\]

Multiplying both sides by $t^\mu$, and using the definition of $\|u\|_{\bx}$ together with \eqref{eq:mu-2}, we obtain
\begin{equation*}\label{nl-bound2}
\begin{aligned}
t^\mu\|\mathcal{I}(u)(t)\|_{L^r}
&\le C\|u\|_{\bx}^p\,t^\mu\int_0^t (t-s)^{-\delta}s^{-p\mu}\,ds\\
&\le C\|u\|_{\bx}^p\int_0^1 (1-\tau)^{-\delta}\tau^{-p\mu}\,d\tau.
\end{aligned}
\end{equation*}
The last integral is finite in view of \eqref{eq:r} and \eqref{eq:mu-1}. Therefore,
\begin{equation}\label{nonlinear-bound}
\sup_{t>0} t^\mu\|\mathcal{I}(u)(t)\|_{L^r}
\le C\|u\|_{\bx}^p.
\end{equation}

A standard Lipschitz estimate based on \eqref{NL-bound} yields
\begin{equation}\label{nonlinear-lip}
\|\mathcal{F}(u)-\mathcal{F}(v)\|_{\bx}
\le C\bigl(\|u\|_{\bx}^{p-1}+\|v\|_{\bx}^{p-1}\bigr)\|u-v\|_{\bx}.
\end{equation}

Combining \eqref{eq:linear-bound} and \eqref{nonlinear-bound}, we infer that for every
$u\in\overline{\bb}(0,2C\varepsilon)$,
\begin{equation*}\label{stab-bound}
\begin{aligned}
\|\mathcal{F}(u)\|_{\bx}
&\le C\|u_0\|_{L^{q_c}}+C\|u\|_{\bx}^p\\
&\le C\varepsilon+C(2C\varepsilon)^p\\
&\le 2C\varepsilon,
\end{aligned}
\end{equation*}
provided $\varepsilon>0$ is sufficiently small. Hence $\mathcal{F}$ maps
$\overline{\bb}(0,2C\varepsilon)$ into itself whenever
\[
\|u_0\|_{L^{q_c}}\le\varepsilon.
\]

Finally, by \eqref{nonlinear-lip}, for all
$u,v\in\overline{\bb}(0,2C\varepsilon)$,
\begin{equation*}\label{contrac-bound}
\|\mathcal{F}(u)-\mathcal{F}(v)\|_{\bx}
\le 2C(2C\varepsilon)^{p-1}\|u-v\|_{\bx}.
\end{equation*}
Choosing $\varepsilon>0$ sufficiently small so that
\[
2C(2C\varepsilon)^{p-1}<1,
\]
the operator $\mathcal{F}$ is a contraction on
$\overline{\bb}(0,2C\varepsilon)$. By the Banach fixed-point theorem, $\mathcal{F}$ admits a unique fixed point
\[
u\in\overline{\bb}(0,2C\varepsilon),
\]
which is the desired global mild solution of \eqref{Duhamel} in $\bx$.
%==========================================================================
\section{Blow-up}\label{sec:fujita}
In this section, we establish Theorem~\ref{Fujita1}. Our strategy is to argue by contradiction and to apply the test function method, a standard and powerful technique in the analysis of blow-up phenomena and nonexistence results for nonlinear evolution equations. The proof follows the general framework developed in several related works, including \cite{Gued-Kira, GK2001, JKS, MM, Majd, ES}, suitably adapted to the present setting.

Assume that $u$ is a nonnegative global weak solution of~\eqref{Main-eq} in the sense of Definition~\ref{defn:weak-solution}.

For $T>0$ large (to be sent to $\infty$) and $\lambda>0$ (needed only in the
critical case $p=1+\frac{2}{N}$; set $\lambda=1$ in the subcritical case), define
\begin{equation}\label{Test-func}
\psi(t,x)
\;=\;
\left(\phi\!\left(\frac{t}{T}\right)\right)^{\!\frac{p}{p-1}}
\left(\phi\!\left(\frac{|x|^{2}}{\lambda T}\right)\right)^{\!\frac{2p}{p-1}},
\end{equation}
where $\phi\in C_c^\infty([0,\infty))$ satisfies
\begin{equation}\label{cut-off}
\phi\equiv1 \text{ on } [0,1],
\qquad
\phi\equiv0 \text{ on } [2,\infty),
\qquad
0\le\phi\le1.
\end{equation}
Set $m:=\frac{p}{p-1}$ for brevity.
Then $\psi\in C_c^\infty([0,\infty)\times\R^N)$, $\psi\ge0$, $\psi(0,x)=\phi(|x|^2/(\lambda T))^{2m}$,
and $\psi$ is supported in $[0,2T]\times B(0,\sqrt{2\lambda T})$.

As noted in \cite{Fino}, the parameter $\lambda$ is needed only in the critical case $p = 1 + \frac{2}{N}$. In the subcritical regime $p < 1 + \frac{2}{N}$, one may simply take $\lambda = 1$.

The key point is that the interface term vanishes. Indeed, since $|x|^2=|\tilde x|^2+x_N^2$,
the test function depends on $x_N$ only through $x_N^2$, and is therefore
{even} in the normal variable:
\[
\psi(t,\tilde x,x_N)=\psi(t,\tilde x,-x_N)
\qquad\text{for all }(t,\tilde x,x_N).
\]
Differentiating in $x_N$:
\begin{align*}
\partial_{x_N}\psi(t,x)
&=
2m\,\phi\!\left(\frac{t}{T}\right)^{m}\,
\phi\!\left(\frac{|x|^2}{\lambda T}\right)^{2m-1}\,
\phi'\!\left(\frac{|x|^2}{\lambda T}\right)\,
\frac{2\,x_N}{\lambda T}.
\end{align*}
Evaluating on the interface $\bbs=\{x_N=0\}$:
\begin{equation}\label{eq:interface-vanishes}
\partial_{x_N}\psi(t,\tilde x,0)=0
\qquad\text{for all }(t,\tilde x)\in[0,2T]\times\R^{N-1},
\end{equation}
because of the explicit factor $x_N\big|_{x_N=0}=0$.  Consequently, the
interface contribution in the weak formulation~\eqref{eq:weak-formulation}
vanishes:
\begin{equation}\label{eq:interface-zero}
2\fq\int_0^{2T}\!\!\int_{\bbs}
u(t,\tilde x,0)\,\partial_{x_N}\psi(t,\tilde x,0)\,d\sigma(\tilde x)\,dt
\;=\;0.
\end{equation}

The vanishing~\eqref{eq:interface-vanishes} is not a coincidence but a
consequence of a general principle: any test function that is {even in
the normal variable} (i.e.\ symmetric across $\bbs$) automatically satisfies
$\partial_{x_N}\psi|_{\bbs}=0$ and therefore decouples the weak formulation of
the transmission problem from that of the standard heat equation.  The radial
structure $\psi=\psi(t,|x|^2)$ provides a natural class of such test
functions.

Now, by~\eqref{eq:interface-zero}, inserting $\psi$ into the weak
formulation~\eqref{eq:weak-formulation} yields exactly the same identity as
in the classical ($\fq=0$) case:
\begin{equation}\label{Contrad-0}
\int_{\R^N}u_0(x)\,\psi(0,x)\,dx
\;+\;
\int_0^{2T}\!\!\int_{\R^N}|u|^{p-1}u\;\psi\,dx\,dt
\;=\;
-\int_0^{2T}\!\!\int_{\R^N}u\,\partial_t\psi\,dx\,dt
\;-\;
\int_0^{2T}\!\!\int_{\R^N}u\,\Delta\psi\,dx\,dt.
\end{equation}
Hence
\begin{equation}\label{Contrad-0b}
\int_{\R^N}u_0\,\psi(0,\cdot)\,dx
\;+\;
\int_0^{2T}\!\!\int_{\R^N}u^p\,\psi\,dx\,dt
\;\le\;
\int_0^{2T}\!\!\int_{\R^N}u\,|\partial_t\psi|\,dx\,dt
\;+\;
\int_0^{2T}\!\!\int_{\R^N}u\,|\Delta\psi|\,dx\,dt.
\end{equation}
Apply Young’s inequality with parameter $\varepsilon>0$,
\[
ab \le \varepsilon\, a^p + \frac{1}{q}\,\varepsilon^{-\frac{q}{p}}\, b^q,
\qquad \frac{1}{p}+\frac{1}{q}=1,
\]
where $q=p'=\frac{p}{p-1}$, to each term on the right-hand side. This gives
\begin{align*}
\int_0^{2T}\!\!\int_{\R^N}u\,|\partial_t\psi|\,dx\,dt
&\;\le\;
\frac{1}{4}\int_0^{2T}\!\!\int_{\R^N}u^p\,\psi\,dx\,dt
\;+\;
C\,I_1,\\[4pt]
\int_0^{2T}\!\!\int_{\R^N}u\,|\Delta\psi|\,dx\,dt
&\;\le\;
\frac{1}{4}\int_0^{2T}\!\!\int_{\R^N}u^p\,\psi\,dx\,dt
\;+\;
C\,I_2,
\end{align*}
where
\begin{equation}
    \label{eq:I1I2}
I_1:=\int_0^{2T}\!\!\int_{\R^N}|\partial_t\psi|^{m}\,\psi^{-1/(p-1)}\,dx\,dt,
\qquad
I_2:=\int_0^{2T}\!\!\int_{\R^N}|\Delta\psi|^{m}\,\psi^{-1/(p-1)}\,dx\,dt.
\end{equation}
Absorbing the corresponding terms into the left-hand side of~\eqref{Contrad-0b}, we obtain
\begin{equation}\label{Contrad-1}
\int_{\R^N}u_0(x)\,\psi(0,x)\,dx
\;+\;
\frac{1}{2}\int_0^{2T}\!\!\int_{\R^N}u^p\,\psi\,dx\,dt
\;\le\;
C\,(I_1+I_2).
\end{equation}

A straightforward computation yields
\[
|\partial_t\psi|
\;\le\;
\frac{C}{T}\,\phi\!\left(\frac{t}{T}\right)^{m-1}
\,\phi\!\left(\frac{|x|^2}{\lambda T}\right)^{2m},
\]
which is supported in $\{T\le t\le 2T\}$ (where $\phi'(t/T)\neq 0$). Hence,
\[
|\partial_t\psi|^m\,\psi^{-1/(p-1)}
\;\le\;
\frac{C}{T^m}\,\phi\!\left(\frac{|x|^2}{\lambda T}\right)^{2m},
\]
and therefore
\begin{equation}\label{I10}
I_1\;\le\;\frac{C}{T^m}\int_T^{2T}\!\!\int_{|x|\le\sqrt{2\lambda T}}dx\,dt
\;=\;C\,\lambda^{N/2}\,T^{1+N/2-m}.
\end{equation}
\smallskip
To estimate $I_2$, we invoke Lemma~\ref{lapl-g} (cf.\ \eqref{Lap-g-est}) to deduce
\begin{equation}\label{I20}
I_2\;\le\;\frac{C}{(\lambda T)^m}\int_0^{2T}\!\!\int_{|x|\le\sqrt{2\lambda T}}dx\,dt
\;=\;C\,\lambda^{N/2-m}\,T^{1+N/2-m}.
\end{equation}

Combining \eqref{I10}--\eqref{I20}, and using $m=p/(p-1)=1+1/(p-1)$, we obtain
\begin{equation}\label{I1I2}
I_1+I_2
\;\le\;
C(\lambda)\;T^{\frac{N}{2}-\frac{1}{p-1}},
\end{equation}
where $C(\lambda)$ depends on $\lambda$ but not on~$T$.
 In the subcritical regime, that is $1<p<1+\frac{2}{N}$, we set $\lambda=1$ and observe that $\frac{N}{2}-\frac{1}{p-1}<0$.  From~\eqref{Contrad-1} and~\eqref{I1I2}, we get
\[
\int_{\R^N}u_0(x)\,\phi\!\left(\frac{|x|^2}{T}\right)^{2m}\,dx
\;\le\;
C\,T^{\frac{N}{2}-\frac{1}{p-1}}
\;\xrightarrow{T\to\infty}\;0.
\]
By dominated convergence the left-hand side converges to $\displaystyle\int_{\R^N}u_0(x)\,dx>0$.
This is a contradiction, and the proof in the subcritical regime is complete.

\medskip

Consider now the critical case $p=1+\frac{2}{N}$. Since $\frac{N}{2}-\frac{1}{p-1}=0,$ 
it follows from \eqref{Contrad-1}--\eqref{I1I2} with $\lambda=1$ that
\begin{equation}\label{Contrad-3}
\int_{\R^N}u_0(x)\,\phi\!\left(\frac{|x|^2}{T}\right)^{2m}\,dx
\;+\;
\frac{1}{2}\int_0^{2T}\!\!\int_{\R^N}u^p(t,x)\,\psi(t,x)\,dx\,dt
\;\le\;C
\qquad\text{(uniformly in }T\text{)}.
\end{equation}
Letting $T\to\infty$, we deduce
\begin{equation}\label{Contrad-4}
\int_{\R^N}u_0(x)\,dx
\;+\;
\frac{1}{2}\int_0^{\infty}\!\!\int_{\R^N}u^p(t,x)\,dx\,dt
\;\le\;C
\;<\;\infty.
\end{equation}
In particular, $u^p\in L^1((0,\infty)\times\R^N)$, and therefore
\begin{equation}\label{tail-vanish}
\lim_{T\to\infty}\int_T^{2T}\!\!\int_{\R^N}u^p(t,x)\,dx\,dt\;=\;0.
\end{equation}

We next estimate the first term on the right-hand side of \eqref{Contrad-0b} by H\"older's inequality, while the second term is handled once more by the $\epsilon$-Young inequality. This yields
\begin{equation}\label{Contrad-00}
\int_{\R^N}u_0(x)\psi(0,x)dx+\frac{1}{2}\int_{\R^N}\int_0^{2T} u^p\psi\, dxdt\,\leq\, I_1^{\frac{p-1}{p}}\left(\int_{\R^N}\int_{T}^{2T}|u|^p\psi\, dx\,dt\right)^{\frac{1}{p}} +I_2,
\end{equation}
where $I_1$ and $I_2$ are defined in~\eqref{I1I2}. Moreover, from~\eqref{I10} and~\eqref{I20}, at the critical exponent we have
\[
I_1\le C\,\lambda^{N/2},
\qquad
I_2\le C\,\lambda^{-1}.
\]

Consequently, the $\lambda$-dependence in~\eqref{Contrad-00} takes the form
\begin{equation}\label{eq:lambda-final}
\int_{\R^N}u_0(x)\,\phi\!\left(\frac{|x|^2}{\lambda\,T}\right)^{2m}\,dx
\;+\;
\frac{1}{2}\int_0^{2T}\!\!\int_{\R^N}u^p\,\psi\,dx\,dt
\;\le\;
C\,\lambda^{1/p}
\left(\int_T^{2T}\!\!\int_{\R^N}u^p\,dx\,dt\right)^{1/p}
\;+\;
C\,\lambda^{-1},
\end{equation}
where we used
\[
\frac{N(p-1)}{2p}=\frac{1}{p}.
\]

Passing to the limit in \eqref{eq:lambda-final}, first as $T\to\infty$ using \eqref{tail-vanish}, and then as $\lambda\to\infty$, we obtain
\[
\int_{\R^N}u_0\,dx
\;+\;
\frac{1}{2}\int_0^{\infty}\!\!\int_{\R^N}u^p\,dx\,dt
\;\le\;0,
\]
which contradicts the assumption~\eqref{ass-u0}. This completes the proof of Theorem~\ref{Fujita1}.
\section{Concluding remarks}\label{sec:conclusion}

We have shown that the Fujita critical exponent for the semilinear equation
\[
\partial_t u = \mathscr{L}u + |u|^{p-1}u,
\qquad
\mathscr{L} = \Delta + 2\fq\,\delta_{\bbs}\,\nabla,
\]
where \(\mathscr{L}\) is the generator of skew Brownian motion, coincides with the classical Fujita exponent.

The proof combines two main ingredients: a two-sided Gaussian comparison principle and a Fujita-type test function argument. We conclude by outlining several related open problems.

\noindent\textbf{1.\ Curved and higher-codimension interfaces.}
The hyperplane $\bbs=\{x_N=0\}$ is the simplest possible interface geometry.
Replacing $\bbs$ by a smooth compact hypersurface $\Sigma\subset\R^N$
introduces curvature-dependent corrections in the kernel and destroys the
tangential-normal factorization that underpins all estimates in
Section~\ref{sec:prelim}.  It is natural to ask whether the Fujita exponent
remains $1+2/N$ when $\Sigma$ is a smooth closed surface, or whether
curvature effects can shift the threshold. 
\medskip

\noindent\textbf{2.\ Nonlinear or $x$-dependent skewness.}
In the present work the skewness parameter $\fq$ is a fixed constant.
Physical applications, for instance, diffusion in layered porous media or
population dynamics with spatially heterogeneous barriers, motivate the study
of $x$-dependent skewness $\fq=\fq(\tilde x)$ varying along the interface, or
even state-dependent transmission conditions where $\fq$ depends on the
solution itself.  Variable skewness would affect the kernel bounds through the
constants $1\pm|\fq(\tilde x)|$, and it is unclear whether the Gaussian
comparison remains two-sided with uniform constants, especially if
$|\fq(\tilde x)|\to 1$ at some points on~$\bbs$.

\medskip

\noindent\textbf{3.\ Fractional and nonlocal diffusion with interfaces.}
Replacing the Laplacian by a fractional operator $(-\Delta)^{\mathtt{s}}$, $\mathtt{s}\in(0,1)$,
leads to the fractional Fujita exponent $p_F^{\mathtt{s}}=1+\frac{2\mathtt{s}}{N}$ for the
equation $\partial_t u+(-\Delta)^{\mathtt{s}}u=u^p$
(see, e.g., \cite{Kirane2025} and references therein).
It would be interesting to study whether a singular interface drift analogous
to $2\fq\,\delta_{\bbs}\,\nabla$ can be incorporated into the fractional framework
and whether the resulting critical exponent remains $1+2{\mathtt{s}}/N$.

\noindent\textbf{4.\ Multiple interfaces and networks.}
A natural generalization is to consider several parallel hyperplanes
$\bbs_1,\ldots,\bbs_k$, each carrying its own skewness parameter $\fq_j$, or more
generally a network of interfaces.  The fundamental solution would then
involve multiple reflections and transmissions, and the Gaussian comparison
would depend on the spacing and the parameters $\{\fq_j\}$.  If the
two-sided Gaussian bounds survive, as is plausible when the interfaces are
finitely many and uniformly separated, one would expect the same Fujita
exponent $1+2/N$.  A rigorous proof, however, requires new kernel estimates.

\medskip

\noindent\textbf{5.\ Random and time-dependent interfaces.}
In several physical contexts (e.g., moving phase boundaries, fluctuating
membranes), the interface $\bbs$ is not fixed but evolves in time, or is
itself a realization of a random field.  The interaction between a moving
or random interface and the Fujita blow-up mechanism is completely unexplored.
Even the well-posedness theory for $\mathscr{L}$ with a time-dependent
hyperplane $\bbs(t)$ appears to be open.
%==========================================================================

\vspace{1.5cm}
\hrule
\vspace{0.5cm}

\noindent{\bf\large Declarations.} {\em On behalf of all authors, the corresponding author states that there is no conflict of interest. No data-sets were generated or analyzed during the current study.}

\vspace{0.5cm}
\hrule

%==========================================================================

\end{document}